\newtheorem{thm}{Theorem}
\newtheorem{cor}[thm]{Corollary}
\newtheorem{lem}[thm]{Lemma}
\newtheorem{prop}[thm]{Proposition}
\newtheorem{claim}[thm]{Claim}
\newtheorem{defn}[thm]{Definition}
\theoremstyle{definition}
\newtheorem{rem}{Remark}
\newtheorem{notation}{Notation}
\newcommand{\nn}{\mathbb{N}}
\newcommand{\con}{\smallfrown}
\newcommand{\meg}{\geqslant}
\newcommand{\mik}{\leqslant}
\newcommand{\pred}{\mathrm{Pred}}
\newcommand{\ws}{\mathrm{ws}}
\newcommand{\w}{\mathrm{W}}
\newcommand{\supp}{\mathrm{supp}}
\newcommand{\lex}{\mik_{\mathrm{lex}}}
\begin{document}
\title{A Dual Ramsey theorem for trees}
\author{Stevo Todorcevic}
\author{Konstantinos Tyros}

\address{Department of Mathematics, University of Toronto, Toronto, Canada, M5S 2E4. Institut de Math\'ematiques de Jussieu, UMR 7586, 2 pl. Jussieu, case 7012, 75251 Paris Cedex 05, France.
Matemati\v{c}ki Institut, SANU, Belgrade, Serbia}
\email{stevo@math.toronto.edu stevo.todorcevic@imj-prg.fr 
stevo.todorcevic@sanu.ac.rs}
\address{and}
\address{Department of Mathematics, University of Athens, Panepistimiopolis 157 84, Athens, Greece}
\email{ktyros@math.uoa.gr}

\thanks{2000 \textit{Mathematics Subject Classification}: 05D10.}
%\thanks{\textit{Key words}: Disjoint Union Theorem, Hales-Jewett Theorem, Dual Ramsey Theory, Trees, Halpern-L\"{a}uchli theorem, Level products.}
%\thanks{Partially suported by grants from NSERC and CNRS}
%\thanks{The second author is supported by ERC grant 306493}

\maketitle

\begin{abstract}
  We prove a dualization of the Graham--Rothschild Theorem for variable words indexed by homogeneous trees.
\end{abstract}

\section{Introduction} In 1971, R. Graham and B. Rothschild (see \cite{GR}) proved a higher dimensional analogue of the celebrated Hales-Jewett Theorem (see \cite{HJ}) about colorings of parameter words indexed by $\{0,...,n-1\}$ where $n$ is a positive integer number. To state the  Graham--Rothschild Theorem we need to introduce some pieces of notation.
Let $\Lambda$ be a finite nonempty set, which we view as an alphabet, let $m,n$ be positive integers with $m \mik n$
and let $v_0,...,v_{m-1}$ be symbols not belonging to $\Lambda$.
We view the elements of the set  $\Lambda^n$ of all functions from the set $\{0,...,n-1\}$ into $\Lambda$ as constant words.
By the term $m$-parameter word we mean a map
$w=w(v_0,...,v_{m-1}):\{0,...,n-1\}\to \Lambda\cup\{v_0,...,v_{m-1}\}$ such that every $v_i$ is attained at least once
and for every integer with $0\mik i<m-2$ we have $\min w^{-1}(\{v_i\})< \min w^{-1}(\{v_{i+1}\})$. Given an $m$-parameter word $w=w(v_0,...,v_{m-1})$ and $a_0,...,a_{m-1}$ in $\Lambda$ we denote by $w(a_0,...,a_{m-1})$ the element of $\Lambda^n$
resulting by substituting every occurrence of each $v_i$ by $a_i$ and set $[w]_\Lambda = \{w(a_0,...,a_{m-1}):a_0,...,a_{m-1}\in\Lambda\}$. Moreover, if $k$ is a positive integer with $k\mik m$ then we denote by $[w]_{k,\Lambda}$ the set of all $k$-parameter words $\tilde{w}$ satisfying $[\tilde{w}]_{\Lambda}\subseteq [w]_{\Lambda}$.
The Graham--Rothschild Theorem states the following.
\begin{thm}[Graham--Rothschild]
\label{Graham_Rothschild}
  Let $\ell,k,m$ and $r$ be positive integers with $k\mik m$. Then there exists a positive integer $n_0$ with the following property. For every finite alphabet $\Lambda$ with $\ell$ elements, every integer $n$ with $n\meg n_0$ and every $r$-coloring of the $k$-parameter words, there exists an $m$-parameter word $w$ such that the set $[w]_{k,\Lambda}$ is monochromatic.
\end{thm}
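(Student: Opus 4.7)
The plan is to deduce Theorem \ref{Graham_Rothschild} from the Hales--Jewett Theorem by induction on $k$. The unifying technique is a coordinate splitting: partition $\{0,\ldots,n-1\}$ into an initial segment $I$ and a tail $J$, build the first variable block $v_0$ of the target $m$-parameter word on $I$ and the remaining $(m-1)$-parameter structure on $J$, and link the two halves via enlarged colour sets that encode the combinatorial freedom in the other half. The ordering condition on the variables of the target word is automatic from this left-to-right placement.

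For the base case $k=1$ the statement reduces to a higher-dimensional Hales--Jewett: every $r$-colouring of $1$-parameter words of length $n$ yields an $m$-parameter word $w$ with $[w]_{1,\Lambda}$ monochromatic. I would argue by a secondary induction on $m$ (base $m=1$ being Hales--Jewett itself), using the coordinate splitting to pass from $m$ to $m+1$: the inductive hypothesis on $J$, applied to a colour set enlarged by a factor of $|\Lambda|^{|I|}$ to record the constant assignment on $I$, produces an $m$-parameter word $u$ on $J$ good for its $1$-parameter subwords; a single Hales--Jewett application on $I$ then pins down $v_0$. For the inductive step $k-1\Rightarrow k$, by the ordering condition on the new variables every $k$-parameter word of length $n$ decomposes relative to the split $I\sqcup J$ as either (i) a constant $a\in\Lambda$ on $I$ together with a $k$-parameter word on $J$, or (ii) a $1$-parameter piece on $I$ carrying $\tilde v_0$ together with a $(k-1)$-parameter word on $J$ carrying $\tilde v_1,\ldots,\tilde v_{k-1}$. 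Case (ii) is handled by the outer inductive hypothesis for $k-1$ applied on $J$; case (i), which involves $k$-parameter subwords of the prospective $u$, must be absorbed into the inflated colour set \emph{before} that application.

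The main obstacle is the amalgamation required to handle case (i) alongside case (ii). The natural device is to view a $k$-parameter word on $J$ as a $(k-1)$-parameter word on $J$ decorated with a split of one of its variable blocks into two, and to pre-package this decoration into an enlarged colour set of size $r^{N}$, where $N$ counts the $|\Lambda|$-many choices of $a\in\Lambda$ together with the combinatorial possibilities for such a split within a given $(k-1)$-parameter word of $u$. Applying the inductive hypothesis for $k-1$ to this enlarged colouring then delivers an $(m-1)$-parameter word $u$ with the required uniformity for both cases simultaneously. The base case $k=1$ applied on $I$ finalises the $v_0$-block, and concatenation produces the sought $m$-parameter word $w$ with $[w]_{k,\Lambda}$ monochromatic. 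Tracking the inflation of colour sets through the double induction is the delicate bookkeeping that drives the tower-type growth of the bound $n_0$ in terms of $\ell,k,m,r$.
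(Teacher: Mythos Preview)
The paper does not give a standalone proof of Theorem~\ref{Graham_Rothschild}; it is quoted as the classical result of Graham and Rothschild, and the only remark is that the case $b=1$ of the main Theorem~\ref{tree_GR} recovers it. So the ``paper's proof'' is the entire machinery developed for the tree generalisation, specialised to $b=1$.

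Your direct approach has a genuine gap in the inductive step $k-1\Rightarrow k$. After the split $I\sqcup J$, case~(i) requires that for every $a\in\Lambda$ all $k$-parameter subwords of the prospective $(m-1)$-parameter word $u$ on $J$ receive the same colour. Your inductive hypothesis, however, only controls $(k-1)$-parameter subwords of $u$. The proposed fix---encode a $k$-parameter word as a $(k-1)$-parameter word decorated with a split of one variable block, and absorb the decoration into an enlarged colour set before invoking the hypothesis---is circular. The coloring to which the $(k-1)$ hypothesis is applied must be defined on \emph{all} $(k-1)$-parameter words of length $|J|$, before $u$ exists; for such a word the number of admissible splits of a variable block is governed by the size of that block, hence by $|J|$ itself. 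Thus the inflated colour set has size roughly $r^{2^{|J|}}$, while $|J|$ must in turn be chosen large enough for the hypothesis with that many colours. You cannot close this loop. This is exactly the obstruction that makes Graham--Rothschild strictly harder than an iterated Hales--Jewett; the classical proofs break it either by a joint induction on $k$ and the alphabet (so that one can trade a variable for a new letter), or by proving a stronger auxiliary statement that carries enough structure through the induction. Neither mechanism is present in your sketch.

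A smaller point: the base case $k=1$ is not multidimensional Hales--Jewett. Hales--Jewett colours constant words, whereas here you are colouring $1$-parameter words; the case $k=1$, $m=1$ is trivial (the set $[w]_{1,\Lambda}$ is a singleton), not an instance of Hales--Jewett, and the passage $m\to m+1$ you outline already runs into the same difficulty as the general inductive step once $m\geq 2$.
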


One of the immediate consequences of the Graham--Rothschild Theorem is a finite form of the Milliken--Taylor Theorem
(see \cite{Mi}, \cite{Ta}).
Let $k,m$ be positive integers with $k\mik m$ and let $\mathbf{F} = (F_i)_{i=0}^{m-1}$ be a sequence of pairwise
disjoint finite nonempty subsets of the natural numbers such that $(\min F_i)_{i=0}^{m-1}$ is an increasing sequence.
By $\mathrm{DS}_k(\mathbf{F})$ we denote the set of all pairwise disjoint sequences $(G_j)_{j=0}^{k-1}$ of nonempty sets such that $(\min G_j)_{j=0}^{m-1}$ is an increasing sequence and each $G_j$ belongs to the set $\{\bigcup_{i\in U}F_i:U\subseteq\{0,...,m-1\}\}$. For simplicity, if $n$ is a positive integer, then we set
$\mathrm{DS}_k(n) = \mathrm{DS}_k((\{i\})_{i=0}^{n-1})$. The finite form of the Milliken--Taylor Theorem is as follows.

\begin{thm}(Milliken--Taylor)
  Let $k,m$ and $r$ be positive integers. Then there exists a positive integer $n_0$ with the following property.
  For every positive integer $n$ with $n_0\mik n$ and every $r$-coloring of $\mathrm{DS}_k(n)$ there exists
  $\mathbf{F}$ in $\mathrm{DS}_m(n)$ such that the set $\mathrm{DS}_k(\mathbf{F})$ is monochromatic.
\end{thm}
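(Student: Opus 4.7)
The plan is to deduce the Milliken--Taylor theorem from Theorem \ref{Graham_Rothschild} by encoding the elements of $\mathrm{DS}_k(n)$ as parameter words over a two-letter alphabet. Given the parameters $k, m, r$, I would take $n_0$ to be the integer supplied by the Graham--Rothschild theorem applied with $\ell=2$, $k$, $m$ and $r$, and check that this $n_0$ also witnesses the Milliken--Taylor conclusion.

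First, fix $n \meg n_0$ and set $\Lambda:=\{0,1\}$. I would introduce the natural surjection $\phi$ from the set of $k$-parameter words of length $n$ over $\Lambda$ onto $\mathrm{DS}_k(n)$ defined by $\phi(w)=(w^{-1}(v_0),\ldots,w^{-1}(v_{k-1}))$; the nonemptiness, pairwise disjointness and increasing-minima clauses in the definition of a $k$-parameter word make $\phi(w)$ automatically lie in $\mathrm{DS}_k(n)$. Given an $r$-coloring $c$ of $\mathrm{DS}_k(n)$, I pull it back to $\tilde c:=c\circ \phi$ on the $k$-parameter words and apply Graham--Rothschild to obtain an $m$-parameter word $w^*$ of length $n$ over $\Lambda$ with $[w^*]_{k,\Lambda}$ $\tilde c$-monochromatic. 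Setting $\mathbf{F}:=\phi(w^*)\in\mathrm{DS}_m(n)$, the goal becomes showing that $\mathrm{DS}_k(\mathbf{F})$ is $c$-monochromatic.

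For the decoding step, I would observe that every $\mathbf{G}=(G_j)_{j=0}^{k-1}\in\mathrm{DS}_k(\mathbf{F})$ admits a unique description $G_j=\bigcup_{i\in U_j}F_i$ with $(U_j)_{j=0}^{k-1}\in\mathrm{DS}_k(m)$: pairwise disjointness, nonemptiness and the increasing-minima condition transfer between the $G_j$'s and the $U_j$'s because the $F_i$'s are pairwise disjoint with strictly increasing minima. From such a $(U_j)$ I would build a candidate $k$-parameter word $\tilde w$ by substituting into $w^*$ the new variable $v'_j$ for every old variable $v_i$ with $i\in U_j$, and the constant $0$ for every $v_i$ with $i\notin \bigcup_j U_j$. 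By construction $\phi(\tilde w)=\mathbf{G}$, so $c(\mathbf{G})=\tilde c(\tilde w)$ is forced to equal the constant color of $[w^*]_{k,\Lambda}$---provided $\tilde w$ really belongs to $[w^*]_{k,\Lambda}$.

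The most delicate step, and the reason for choosing $|\Lambda|=2$ rather than $|\Lambda|=1$, is verifying this last membership, i.e.\ $[\tilde w]_\Lambda\subseteq [w^*]_\Lambda$: with a one-letter alphabet both sides degenerate to a single constant word and Graham--Rothschild says nothing, while with two letters the inclusion has content. I would check it directly: every assignment of values in $\Lambda$ to $v'_0,\ldots,v'_{k-1}$ in $\tilde w$ lifts to an assignment to $v_0,\ldots,v_{m-1}$ in $w^*$---send $v_i$ to the chosen value of $v'_j$ when $i\in U_j$, and to $0$ when $i\notin\bigcup_j U_j$---yielding the same element of $\Lambda^n$. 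It must also be confirmed that $\tilde w$ is a legitimate $k$-parameter word; but each $v'_j$ appears since $G_j\neq\emptyset$, and the minima of $\tilde w^{-1}(v'_j)=G_j$ are strictly increasing because $\mathbf{G}\in\mathrm{DS}_k(n)$.
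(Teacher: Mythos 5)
Your argument is correct and carries out precisely the deduction the paper alludes to when it calls Milliken--Taylor an ``immediate consequence'' of Graham--Rothschild: pull the coloring back along the map $w\mapsto(w^{-1}(v_0),\dots,w^{-1}(v_{k-1}))$ over the two-letter alphabet, apply Theorem~\ref{Graham_Rothschild}, and decode. All the verifications you flag (that $(U_j)_{j}$ inherits disjointness and increasing minima from $(G_j)_j$ via the disjointness and increasing minima of the $F_i$, that $\tilde w$ is a genuine $k$-parameter word, and that the lift $b_i = a_j$ for $i\in U_j$, $b_i=0$ otherwise, gives $[\tilde w]_\Lambda\subseteq[w^*]_\Lambda$) go through exactly as you describe.
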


Notice that the case $m=1$ of the above result boils down to Folkman's Theorem (see \cite{GRS}; p.82),
which is a finite form of Hindman's Theorem (see \cite{H}). The main result of the present work is a
Graham--Rothschild Theorem for variable words indexed by homogeneous trees.
To state our result we need to introduce some pieces of notation.

\subsection{Complete Skew trees}
Let $b$ and $n$ be positive integers. We denote by $b^{<n}$ the set of all sequences in $\{0,...,b-1\}$ of length less than $n$. For every element $s$ in $b^{<n}$ we denote by $|s|$ its length, while for every $i$ in $\{0,...,|s|-1\}$ we denote by $s(i)$ the $i$-th element of $s$. %Moreover, for every $s,t$ in $b^{<n}$, we denote by $s\wedge t$ their longest common initial segment.
%
%We endow $b^{<n}$ with three orders.
By $\sqsubseteq$ we denote the initial segment ordering and by $\lex$ we denote the
lexicographical order on the set $b^{<n}$.
%We denote the third one by $\preccurlyeq$ which is a linear order on $b^{<n}$
%defined as follows. For every $s,t$ in $b^{<n}$, we set $s\preccurlyeq t$ if either
%\begin{enumerate}
%  \item[(i)] $|s|< |t|$, or
%  \item[(ii)] $|s|=|t|$ and $t\lex s$.
%\end{enumerate}

We view the subsets of $b^{<n}$ as subtrees.
For every subtree $S$ of $b^{<n}$ and $s$ in $S$, we define the following. We set $\mathrm{Pred}_S(s)=\{t\in S:\;t\sqsubset s\}$ and we denote by $\mathrm{ImSucc}_S(s)$ the set of all $t$ in $S$ such that $s\sqsubset t$ and there is no $t'$ in $S$ satisfying $s\sqsubset t'\sqsubset t$. Finally, we set $\mathrm{h}_S(s)$ to be the cardinality of the set $\mathrm{Pred}_S(s)$.

Let $k$ be a positive integer. A $k$-complete skew subtree $S$ of $b^{<n}$ is a subtree with the following properties.
\begin{enumerate}
  \item[(i)] $S$ has a minimum with respect to $\sqsubseteq$ and every maximal chain with respect to $\sqsubseteq$
  is of cardinality $k$.
%  \item[(ii)] For every non maximal node $s$ in $S$ and $i$ in $\{0,...,b-1\}$, there is unique $t$ in $\mathrm{ImSucc}_S(s)$ such that $s^\con (i)\sqsubseteq t$.
  \item[(ii)] For every $s$ and $s'$ in $S$ with $\mathrm{h}_S(s)=\mathrm{h}_S(s')$ and $s\lex s'$, we have that $|s|\mik|s'|$.
  \item[(iii)] For every $s$ and $s'$ in $S$ with $\mathrm{h}_S(s)<\mathrm{h}_S(s')$, we have that $|s|<|s'|$.
  \item[(iv)] For every non maximal $s$ in $S$ with respect to $\sqsubseteq$ and every $i$ in $\{0,...,b-1\}$,
                there is unique $t$ in $\mathrm{ImSucc}_S(s)$ such that $s^\con (i)\sqsubseteq t$.
\end{enumerate}
\subsection{Variable words indexed by trees}.
Let $b, n$ be positive integers and let $\Lambda$ be a finite alphabet. By $\w(b,n,\Lambda)$ we denote the set of all maps
from $b^{<n}$ to $\Lambda$.
Let $k$ be a positive integer and $T$ a $k$-complete skew subtree of $b^{<n}$. A $T$-variable word $f$ on $\w(b,n,\Lambda)$ is a map from $b^{<n}$ into $\Lambda\cup\{v_t:t\in T\}$ such that for every $t$ in $T$ the set $f^{-1}(v_t)$ is non empty and has $t$ as a $\sqsubseteq$-minimum. We denote by $\w_{v,T}(b,n,\Lambda)$ the set of all $T$-variable words on $\w(b,n,\Lambda)$ and by $\w_{v,k}(b,n,\Lambda)$
the union of $\w_{v,T}(b,n,\Lambda)$ over all possible $k$-complete skew subtrees $T$ of $b^{<n}$.
Observe that for every $f$ in $\w_{v,k}(b,n,\Lambda)$, there exists unique $k$-complete skew subtree $T$ of $b^{<n}$,
which we denote by $\ws(f)$ such that $f$ is a $T$-variable word.

Let $f$ in $\w_{v,k}(b,n,\Lambda)$.
If $(\alpha_t)_{t\in \ws(f)}\in \Lambda^{\ws(f)}$, then we denote by $f\big((\alpha_t)_{t\in \ws(f)}\big)$ the element of
$\w(b,n,\Lambda)$ resulting by substituting each occurrence of $v_t$ in $f$ by $\alpha_t$ for all $t$ in $\ws(f)$.
Moreover, we define the span of $f$ as
\[[f]_\Lambda=\{f\big((\alpha_t)_{t\in \ws(f)}\big): (\alpha_t)_{t\in \ws(f)}\in \Lambda^{\ws(f)}\}.\]
Finally, if $k'$ is a positive integer with $k'\mik k$, then we set
\[\w_{v,k'}(f) = \{g\in \w_{v,k'}(b,n,\Lambda) : [g]_\Lambda\subseteq[f]_\Lambda\}.\]

\subsection{Main result} The main result of the present work states the following.
\begin{thm}
  \label{tree_GR}
  For every choice of positive integers $b,\ell, k,m$ and $r$ there exists a positive integer $n_0$ with the following property. For every integer $n$ with $n\meg n_0$, every finite alphabet $\Lambda$ with $\ell$ elements and every $r$-coloring of $\w_{v,k}(b,n,\Lambda)$ there exists $f$ in $\w_{v,m}(b,n,\Lambda)$ such that the set $\w_{v,k}(f)$ is monochromatic.
  We denote the least such $n_0$ by $\mathrm{TGR}(k,m,b,\ell,r)$.
\end{thm}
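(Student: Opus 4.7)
The plan is to prove Theorem \ref{tree_GR} by induction on $m-k\geq 0$. The base case $m=k$ is immediate: for any $f\in\w_{v,k}(b,n,\Lambda)$ we have $\w_{v,k}(f)=\{f\}$, which is trivially monochromatic. For the inductive step, the core of the argument is a Hales-Jewett-type special case, namely the assertion that for every $b,\ell,k,r$ there is $n_1$ so that for every $n\geq n_1$, every alphabet $\Lambda$ with $|\Lambda|=\ell$, and every $r$-coloring of $\w_{v,k}(b,n,\Lambda)$, there exists $f\in\w_{v,k+1}(b,n,\Lambda)$ with $\w_{v,k}(f)$ monochromatic. Granted this step, the full theorem follows by the standard product-type induction: one iterates the Hales-Jewett step $m-k$ times, at each stage replacing the original $r$-coloring by an induced coloring that records, on a monochromatic span for the tree constructed so far, the color assigned to each $k$-variable subword within it, exactly as in the classical reduction from Hales-Jewett to Graham-Rothschild.

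To prove the Hales-Jewett step itself, I would reduce to the classical Graham--Rothschild Theorem (Theorem \ref{Graham_Rothschild}). Passing from a $k$-complete skew subtree $T$ to a $(k{+}1)$-complete skew subtree $T'\supseteq T$ amounts to choosing, for each leaf $s$ of $T$ and each $i\in\{0,\dots,b-1\}$, a new variable position $t_{s,i}\in b^{<n}$ with $s^\con(i)\sqsubseteq t_{s,i}$, in such a way that the conditions (i)--(iv) of a $k$-complete skew subtree continue to hold. I would introduce these $b\cdot|\text{leaves}(T)|$ new positions one at a time, in the lexicographic/height order dictated by condition (ii), and at each stage apply Graham--Rothschild (with a product alphabet $\Lambda^{N}$ coding the values of a variable word on the already-fixed portion of $b^{<n}$) to stabilize the $r$-coloring as a function of the substitution into the next variable position. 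The monochromatic parameter word produced by Graham--Rothschild supplies both the location of the next $t_{s,i}$ and a large enough ``coordinate-wise homogeneous'' region in which the construction can continue.

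\textbf{Main obstacle.} The principal technical difficulty lies in the Hales--Jewett step, specifically in ensuring that the iteratively produced variable positions assemble into a genuine $(k{+}1)$-complete skew subtree rather than an arbitrary collection of variables. Condition (iv) forces one and only one immediate successor for each of the $b$ extensions $s^\con(i)$ of each leaf $s$; conditions (ii) and (iii) force a rigid compatibility between the height of a node in $T'$ and the length of the underlying sequence in $b^{<n}$. These constraints are not preserved by generic parameter-word substitutions, so the auxiliary alphabet used at each Graham--Rothschild application must be carefully designed so that its ``letters'' parametrize only those extensions of the current partial tree that satisfy (i)--(iv). A second, related obstacle is that the induced coloring passed to the inductive hypothesis at stage $m-k$ must be defined consistently on \emph{all} $k$-variable subwords of a span, including those $g$ whose tree $\ws(g)$ is embedded into the larger tree in a combinatorially nontrivial way. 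Getting the bookkeeping right — in particular enumerating the new variable positions in an order that makes both the tree axioms and the Graham--Rothschild pigeonhole applicable at every stage — is where the argument will be most delicate.
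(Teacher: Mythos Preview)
Your outer induction on $m-k$ does not work as stated. The ``standard product-type induction'' you invoke is the one that derives multidimensional Hales--Jewett from one-dimensional Hales--Jewett, and it applies when one is coloring \emph{constant} words (the case $k=0$). For $k\geq 1$ the argument breaks: if you apply your HJ step at level $m$ to the pattern coloring and obtain $f\in\w_{v,m+1}$ with $\w_{v,m}(f)$ pattern-monochromatic, you only learn that two elements $g_1,g_2\in\w_{v,k}(f)$ receive the same color when they occupy the \emph{same canonical position} in two $m$-subwords --- you do not learn that all of $\w_{v,k}(f)$ is monochromatic, and applying $\mathrm{TGR}(k,m)$ inside $f$ only returns an element of $\w_{v,m}(f)$, which is what you already had. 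The classical Graham--Rothschild proof avoids this by inducting on $k$ (not on $m-k$), using $\mathrm{GR}(k-1,\cdot)$ in the step; the paper does the analogous thing, inducting on the number $l$ of nodes in the skew tree (Theorem~\ref{point_subsets}).

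Your HJ step is also underspecified in a way that hides the main difficulty. You describe building $\ws(f)$ from a fixed $k$-complete skew subtree $T$ by adjoining one new leaf at a time, but the set $\w_{v,k}(f)$ you must render monochromatic does not consist only of the $g$ with $\ws(g)=T$; it contains \emph{every} $g$ whose $k$-complete skew subtree $\ws(g)$ embeds anywhere in $\ws(f)\cong b^{<k+1}$, and there are many combinatorially distinct such embeddings. Controlling all of them simultaneously is precisely what forces the paper to develop a separate Hales--Jewett theorem for tree-indexed words (Theorem~\ref{tree_HJ}), proved by a double induction on the parameters $d_2$ and $k$ of an auxiliary mixed-product space $\w^{\mathbf{D}}(b,n,\Lambda)$, together with a signature mechanism (Definition~\ref{defn_sign}, Lemmas~\ref{lem_ind_step} and~\ref{lem_ind}) that lets one stabilize the color one signature at a time. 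A single application of classical Graham--Rothschild with a product alphabet does not give this; indeed the paper's bounds are not primitive recursive, whereas classical $\mathrm{GR}$ is.
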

Notice that the case $b=1$ of Theorem \ref{tree_GR} yields Theorem \ref{Graham_Rothschild}.
The proof of Theorem \ref{tree_GR} requires the development of a Hales--Jewett type theorem for trees
(see Theorem \ref{tree_HJ}), which is of its own interest.
The proof of the latter result utilizes a double induction scheme resulting non primitive recursive bounds.

\subsection{A Milliken--Taylor theorem for trees} As the Graham--Rothschild Theorem yields the Milliken-Taylor Theorem,
it is expected that Theorem \ref{tree_GR} yields some form of the Milliken-Taylor Theorem for trees.
To state the latter we need some pieces of notation.
Let $b$ and $n$ be positive integers. We set
\[\mathcal{U}_1(b^{<n})=\{U\subseteq b^{<n}:\;U\;\text{has a minimum}\}.\]
The subspaces under consideration are collections of pairwise disjoint elements of $\mathcal{U}_1(b^{<n})$ such that their minimums form a complete skew subtree. More precisely, if $k$ is a positive integer, by $\mathcal{U}_k(b^{<n})$ we denote the set of all collections $(U_t)_{t\in T}$, where $T$ is a $k$-complete skew subtree of $b^{<n}$ and $U_t$ belongs to $\mathcal{U}_1(b^{<n})$ with $\min U_t=t$ for all $t$ in $T$. We denote by $\mathcal{U}(b^{<n})$ the union of $\mathcal{U}_k(b^{<n})$ over all positive integers $k$. For $\mathbf{U}$ and $\mathbf{V}$ in $\mathcal{U}(b^{<n})$, we say that $\mathbf{U}$ is a subspace of
$\mathbf{V}$, we write $\mathbf{U}\leq\mathbf{V}$, if every member of $\mathbf{U}$ is a union of members of $\mathbf{V}$. We denote by $\mathcal{U}(\mathbf{U})$ the set of all subspaces of $\mathbf{U}$, while if $k$ is a positive integer, we set $\mathcal{U}_k(\mathbf{U})=\mathcal{U}(\mathbf{U})\cap\mathcal{U}_k(b^{<n})$.
Theorem \ref{tree_GR} has the following immediate consequence.

\begin{thm}
  \label{subsets}
  Let $k,m,b,r$ be positive integers with $k\mik m$. Then there exists a positive integer $n_0$ with the following property. For every integer $n$ with $n\meg n_0$ and every coloring of $\mathcal{U}_k(b^{<n})$ with $r$ colors, there exists $\mathbf{U}$ in $\mathcal{U}_m(b^{<n})$ such that the set $\mathcal{U}_k(\mathbf{U})$ is monochromatic.
  %We denote the least such $n_0$ by $\mathrm{TDU}(k,m,b,r)$.
\end{thm}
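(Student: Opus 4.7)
The plan is to derive Theorem \ref{subsets} directly from Theorem \ref{tree_GR} via a natural dictionary between $k$-variable words and ordered collections of pairwise disjoint sets. The correspondence sends each $f \in \w_{v,k}(b,n,\Lambda)$ to $(f^{-1}(v_t))_{t \in \ws(f)} \in \mathcal{U}_k(b^{<n})$, and it is surjective: the values of $f$ outside $\bigcup_{t \in \ws(f)} f^{-1}(v_t)$ may be chosen arbitrarily in $\Lambda$. Moreover the subspace relation on $\mathcal{U}$ is compatible with the span-based subspace relation on variable words, so colorings transfer back and forth.

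Concretely, fix an alphabet $\Lambda$ with $\ell = 2$ elements and set $n_0 = \mathrm{TGR}(k,m,b,2,r)$. For $n \meg n_0$ and an $r$-coloring $c$ of $\mathcal{U}_k(b^{<n})$, induce an $r$-coloring of $\w_{v,k}(b,n,\Lambda)$ by
\[
\tilde{c}(f) = c\bigl((f^{-1}(v_t))_{t \in \ws(f)}\bigr).
\]
Theorem \ref{tree_GR} yields an $m$-variable word $f \in \w_{v,m}(b,n,\Lambda)$ such that $\w_{v,k}(f)$ is $\tilde{c}$-monochromatic. Put $\mathbf{U} = (f^{-1}(v_t))_{t \in \ws(f)} \in \mathcal{U}_m(b^{<n})$; it then suffices to show $\mathcal{U}_k(\mathbf{U})$ is $c$-monochromatic.

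To this end, given any $\mathbf{V} = (V_s)_{s \in S} \in \mathcal{U}_k(\mathbf{U})$, use $\mathbf{V} \leq \mathbf{U}$ to write $V_s = \bigcup_{t \in A_s} f^{-1}(v_t)$ for suitable pairwise disjoint $A_s \subseteq \ws(f)$, fix any $a_0 \in \Lambda$, and define $g : b^{<n} \to \Lambda \cup \{v_s : s \in S\}$ by setting $g(x) = v_s$ on $V_s$, $g(x) = f(x)$ on $b^{<n} \setminus \bigcup_{t \in \ws(f)} f^{-1}(v_t)$, and $g(x) = a_0$ on each remaining $f^{-1}(v_t)$ with $t \notin \bigcup_s A_s$. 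Since $\min V_s = s$ and $S$ is a $k$-complete skew subtree, $g \in \w_{v,k}(b,n,\Lambda)$ with $\ws(g) = S$ and $(g^{-1}(v_s))_{s \in S} = \mathbf{V}$. A routine substitution check shows $g\bigl((\beta_s)_{s \in S}\bigr) = f\bigl((\gamma_t)_{t \in \ws(f)}\bigr)$, where $\gamma_t = \beta_s$ for $t \in A_s$ and $\gamma_t = a_0$ otherwise; hence $[g]_\Lambda \subseteq [f]_\Lambda$, so $g \in \w_{v,k}(f)$ and $c(\mathbf{V}) = \tilde{c}(g)$ is independent of $\mathbf{V}$.

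The main (and rather minor) obstacle is purely notational: one must verify that the candidate $g$ is an honest $k$-variable word, i.e.\ that $\ws(g)$ really equals $S$, that $s$ is the $\sqsubseteq$-minimum of $g^{-1}(v_s)$ for every $s \in S$, and that the span inclusion holds uniformly in the substitution. Each of these conditions is built into the definitions of $\mathcal{U}_k$ and of the span, which is precisely why Theorem \ref{subsets} arises as an immediate consequence of Theorem \ref{tree_GR}.
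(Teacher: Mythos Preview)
Your proposal is correct and matches the paper's approach: the paper gives no explicit proof of Theorem~\ref{subsets}, merely stating that it is an immediate consequence of Theorem~\ref{tree_GR}, and your argument is precisely the natural unpacking of that implication via the correspondence $f\mapsto (f^{-1}(v_t))_{t\in\ws(f)}$. The verification that $g\in\w_{v,k}(f)$ and that the span inclusion transfers to the subspace relation on $\mathcal{U}$ is exactly what is needed.
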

Theorem \ref{subsets} is the result announced in \cite{TT} and can be viewed as a finite and multidimensional version of the main result in \cite{TT}.

\section{Background Material}
One of the main tools that we will need in the present work is a variant of the Hales--Jewett Theorem, that is, Theorem \ref{Hales_Jewett*} below. Although its proof is an straightforward modification of Shelah's proof of the
Hales--Jewett Theorem, we include its proof for the sake of completeness in an appendix.
Let us start with some basic notation that we will make use for the rest of the paper.
By $\nn$ we denote the set of the natural numbers starting from 0.
For every two sets $X$ and $Y$ by $Y^X$ we denote the set of all maps from $X$ into $Y$.
For every choice of sets $X,Y,Z$ with $Z\subseteq X$ and $f$ in $Y^X$ we denote by $f\upharpoonright Z$ the restriction of $f$ to $Z$.
For every two sequences of finite length $\mathbf{a}=(a_i)_{i=0}^{n-1}$ and $\mathbf{b} = (b_i)_{i=0}^{m-1}$ we denote their concatenation by
$\mathbf{a}^\smallfrown\mathbf{b}$, that is, the sequence $(z_i)_{i=0}^{n+m-1}$ satisfying $z_i = a_i$ for every $i=0,...,n-1$ and $z_i = b_i$ for all $i=n,...,n+m-1$.

\subsection{The Hales--Jewett Theorem}
One of the most central results in Combinatorics is the Hales--Jewett Theorem \cite{HJ}. To state it we need some notation.
Let $N$ be a positive integer and $\Lambda$ a finite alphabet. We view the elements of the set  $\Lambda^N$ of all functions from the set $\{0,...,N-1\}$ into $\Lambda$ as constant words of length $N$ over the alphabet $\Lambda$. Also, let a symbol $v\not\in\Lambda$. A variable word $w(v)$ of length $N$ over $\Lambda$ is a function from $\{0,...,N-1\}$ into $\Lambda\cup\{v\}$ such that the symbol $v$ occurs at least once. For a variable word $w(v)$ over $\Lambda$ and a letter $a\in\Lambda$ we denote by $w(a)$ the constant word resulting by substituting each occurrence of $v$ by $a$.
A combinatorial line is a set of the form $\{w(a):\;a\in\Lambda\}$, where $w(v)$ is a variable word over $\Lambda$.

\begin{thm}
  [Hales--Jewett]
  \label{Hales_Jewett}
  Let $k$ and $r$ be positive integers. Then there exists a positive integer $N_0$ with the following property.
  For every positive integer $N$ with $N\meg N_0$, every alphabet $\Lambda$ with $k$ elements and every $r$-coloring of $\Lambda^N$, there exists a monochromatic combinatorial line. We denote the least such  $N_0$ by $\mathrm{HJ}(k,r)$.

    Moreover, the numbers $\mathrm{HJ}(k,r)$ are upper bounded by a primitive recursive function belonging to the class $\mathcal{E}^5$ of Grzegorczyk's hierarchy.
\end{thm}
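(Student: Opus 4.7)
The plan is to follow Shelah's argument, which is the source of the primitive recursive bound (the original Hales--Jewett proof uses a multidimensional induction giving only Ackermann-type bounds). I proceed by induction on the alphabet size $k$, with the base $k=1$ trivial. For the inductive step, let $M=\mathrm{HJ}(k-1,r)$ and fix $\Lambda=\{a_0,\ldots,a_{k-1}\}$.

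The main task is Shelah's pigeonhole reduction. Given an $r$-coloring $c\colon\Lambda^N\to\{1,\ldots,r\}$ for sufficiently large $N$, I seek $M$ variable words $w_1,\ldots,w_M$ supported on disjoint consecutive blocks $B_1,\ldots,B_M$ of $\{0,\ldots,N-1\}$ enjoying the $a_0$--$a_{k-1}$ invariance that $c(w_1(b_1)^\con\cdots^\con w_M(b_M))$ is unchanged whenever any coordinate $b_j=a_{k-1}$ is replaced by $a_0$. Once this is achieved, the induced $r$-coloring on $\Lambda^M$ factors through the smaller cube $\{a_0,\ldots,a_{k-2}\}^M$, so the inductive hypothesis furnishes there a monochromatic combinatorial line, which lifts (the invariance absorbing the $a_{k-1}$ endpoint of the full line) to a monochromatic line in $\Lambda^N$.

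The $w_j$'s are constructed sequentially for $j=1,\ldots,M$, each via a single pigeonhole. Having fixed $w_1,\ldots,w_{j-1}$, I consider the candidate variable words $u_s(v)=a_0^{s}\,v\,a_{k-1}^{L_j-s-1}$ on $B_j$ for $s=0,\ldots,L_j-1$, keyed on the elementary identity $u_{s-1}(a_0)=u_s(a_{k-1})$. To each $s$ one attaches a \emph{profile}, namely the map on contexts $(b_1,\ldots,b_{j-1},q)\in\Lambda^{j-1}\times\Lambda^{L_{j+1}+\cdots+L_M}$ sending such a context to $c(w_1(b_1)^\con\cdots^\con w_{j-1}(b_{j-1})^\con u_s(a_0)^\con q)$. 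The number of possible profiles is at most $r^{k^{j-1+L_{j+1}+\cdots+L_M}}$, so if $L_j$ exceeds this count then two consecutive values $s-1,s$ share the same profile, and this collision is exactly the required $a_0\leftrightarrow a_{k-1}$ invariance for $w_j:=u_s$.

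The main obstacle, and the source of the nonelementary growth, is the resulting nesting of block sizes: the constraints $L_j > r^{k^{j-1+L_{j+1}+\cdots+L_M}}$ can be solved by choosing $L_M,L_{M-1},\ldots,L_1$ in that order, which forces $L_1\gg L_2\gg\cdots\gg L_M$ to form a tower of exponentials of height $M=\mathrm{HJ}(k-1,r)$. Each induction step therefore takes a tetrational jump in $k$, and the resulting double recursion places $\mathrm{HJ}(k,r)$ within the class $\mathcal{E}^5$ of Grzegorczyk's hierarchy, obtained by closing the tetration-level $\mathcal{E}^4$ under primitive recursion.
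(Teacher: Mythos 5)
Your proposal follows Shelah's argument, which is the reference the paper cites for Theorem~\ref{Hales_Jewett}; the paper itself reproves only the variant $\mathrm{MHJ}^*$ (Theorem~\ref{Hales_Jewett*}) in the appendix, using the same core pigeonhole in a form that iterates over all consecutive pairs $\{a_j,a_{j+1}\}$ rather than inducting on $|\Lambda|$. Your induction on the alphabet size, the reduction to an $\{a_0,a_{k-1}\}$-insensitive coloring via sequential pigeonhole over blocks, the lifting of the line by iterating single-coordinate invariance, and the tower-of-height-$M$ bound accounting placing $\mathrm{HJ}$ in $\mathcal{E}^5$ are all sound and are Shelah's.

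There is one concrete error in the pigeonhole step. Having more phases than profiles gives you some pair $t_1<t_2$ with equal profiles, but it does not give a \emph{consecutive} pair $s-1,s$: the profiles could, for instance, alternate between two values, and then no consecutive pair agrees, so $w_j:=u_s$ with a single variable position cannot be formed. The fix, which is exactly what the appendix does in the proof of Lemma~\ref{Shelah_ins*}, is to let the variable span the interval between the two agreeing phases: take $w_j(v)=a_0^{t_1+1}\,v^{t_2-t_1}\,a_{k-1}^{L_j-t_2-1}$, so that $w_j(a_0)$ and $w_j(a_{k-1})$ are precisely the constant words at phases $t_2$ and $t_1$ respectively, which were just found to have identical profiles. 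Nothing else in your argument depends on $w_j$ being single-variable, so the reduction to the $(k-1)$-letter cube and the lift of the monochromatic line go through unchanged once this repair is made.
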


The bounds mentioned in the above theorem are due to Shelah \cite{Sh}.
We will need a multidimensional version of this Theorem. Let $N$ be a positive integer and $\Lambda$ be a finite alphabet. Also let $m$ be a positive integer and $v_0,...,v_{m-1}$ distinct symbols not belonging to $\Lambda$. An $m$-dimensional variable word of length $N$ over $\Lambda$ is a function $w:\{0,...,N-1\}\to\Lambda\cup\{v_0,...,v_{m-1}\}$ such that each $v_i$ occurs and their supports are in block position, i.e. $\max\supp_w(v_i)<\min\supp_w(v_{i+1})$ for all $i=0,...,m-2$, where $\supp_w(v_i)=\big\{j\in\{0,...,N-1\}:w(j)=v_i\big\}$ which we will refer to as the wildcard set of $v_i$ in $w$. In order to highlight the role of the variables, we will denote such an $m$-dimensional variable word by $w(v_0,...,v_{m-1})$. Thus a variable word is just an $1$-dimensional variable word.

Similarly to the case of a single variable word, we consider substitutions also for $m$-variable
words. In particular, if $a_0,...,a_{m-1}$ are elements of $\Lambda$, then by $w(a_0,...,a_{m-1})$ we denote the constant word resulting by substituting each occurrence of $v_i$ by $a_i$ for all $i=0,...,m-1$. An $m$-dimensional combinatorial subspace $S$ of $\Lambda^N$, is a set of the form $\{w(a_0,...,a_{m-1}):a_0,...,a_{m-1}\in\Lambda\}$, where $w(v_0,...,v_{m-1})$ is an $m$-dimensional variable word of length $N$ over $\Lambda$. In this case, we say that $w(v_0,...,v_{m-1})$ generates $S$. Let us observe that for every $m$-dimensional combinatorial subspace $S$ of $\Lambda^N$ there is unique $m$-dimensional word of length $N$ over $\Lambda$ generating $S$. The multidimensional version of Theorem \ref{Hales_Jewett} is the following.
\begin{thm}
  [Hales--Jewett]
  \label{Hales_Jewett_mult_simple}
  Let $k,m,r$ be positive integers. Then there exists a positive integer $N_0$ with the following property.
  For every positive integer $N$ with $N\meg N_0$, every alphabet $\Lambda$ with $k$ elements and every $r$-coloring of $\Lambda^N$, there exists a monochromatic $m$-dimensional combinatorial subspace. We denote the least such  $N_0$ by $\mathrm{MHJ}(k,m,r)$.

    Moreover, the numbers $\mathrm{MHJ}(k,m,r)$ are upper bounded by a primitive recursive function belonging to the class $\mathcal{E}^5$ of Grzegorczyk's hierarchy.
\end{thm}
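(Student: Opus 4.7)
The plan is to induct on $m$, with the base case $m=1$ being exactly Theorem \ref{Hales_Jewett}. For the inductive step from $m$ to $m+1$, I would set $N_2=\mathrm{MHJ}(k,m,r)$, $N_1=\mathrm{HJ}(k,r^{k^{N_2}})$, and $N_0=N_1+N_2$, and claim that this $N_0$ witnesses the theorem for dimension $m+1$. Treating first the critical case $N=N_0$, view an $r$-coloring $c\colon\Lambda^{N_0}\to\{0,\dots,r-1\}$ as living on $\Lambda^{N_1}\times\Lambda^{N_2}$ and introduce the auxiliary coloring
\[c'\colon\Lambda^{N_1}\longrightarrow\{0,\dots,r-1\}^{\Lambda^{N_2}},\qquad c'(u)(y)=c(u^\con y).\]
Since $c'$ takes at most $r^{k^{N_2}}$ values and $N_1=\mathrm{HJ}(k,r^{k^{N_2}})$, Theorem \ref{Hales_Jewett} produces a variable word $w_1(v_0)$ of length $N_1$ whose combinatorial line is $c'$-monochromatic; equivalently, $c(w_1(a)^\con y)$ is independent of $a\in\Lambda$ for every $y\in\Lambda^{N_2}$.

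Fixing any $a_0\in\Lambda$, set $c''(y)=c(w_1(a_0)^\con y)$; this is an $r$-coloring of $\Lambda^{N_2}$. The inductive hypothesis, with $N_2=\mathrm{MHJ}(k,m,r)$, supplies an $m$-dimensional variable word $w_2(v_1,\dots,v_m)$ of length $N_2$ whose generated subspace is $c''$-monochromatic. Then the concatenation $w_1(v_0)^\con w_2(v_1,\dots,v_m)$ is an $(m+1)$-dimensional variable word of length $N_0$, its variables are automatically in block position, and its span is $c$-monochromatic by combining the two previous steps. For a general $N\meg N_0$, I would reduce to the case $N=N_0$ by passing to the coloring $\tilde c(u)=c(u^\con a_0^{N-N_0})$ of $\Lambda^{N_0}$, applying the result just established to $\tilde c$, and then appending $a_0^{N-N_0}$ to the resulting variable word; this extension preserves the block-position condition and the monochromaticity of the span.

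For the quantitative claim, the argument yields the recursion
\[\mathrm{MHJ}(k,m+1,r)\mik\mathrm{HJ}\!\left(k,r^{k^{\mathrm{MHJ}(k,m,r)}}\right)+\mathrm{MHJ}(k,m,r),\]
which combined with Shelah's membership of $\mathrm{HJ}$ in $\mathcal{E}^5$ places $\mathrm{MHJ}$ in $\mathcal{E}^5$, since this class is closed under composition and bounded primitive recursion. The combinatorial content here is entirely routine given the single-variable theorem; the main delicate point is the verification that the iterated recursion does not escape the Grzegorczyk class $\mathcal{E}^5$, which reduces to the careful form of Shelah's upper bound for the ordinary Hales--Jewett numbers and the fact that the recursion is being performed at a fixed level of the hierarchy.
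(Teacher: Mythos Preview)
Your combinatorial argument is correct and entirely standard; the paper does not supply its own proof of Theorem~\ref{Hales_Jewett_mult_simple} but simply quotes it as known background, so there is nothing to compare on that front. The closest the paper comes is the appendix proof of the stronger Theorem~\ref{Hales_Jewett*}, which proceeds by Shelah's insensitivity method and builds the $m$-dimensional subspace directly rather than by induction on $m$.

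There is, however, a genuine gap in your quantitative claim. The recursion
\[
\mathrm{MHJ}(k,m+1,r)\mik\mathrm{HJ}\!\bigl(k,r^{k^{\mathrm{MHJ}(k,m,r)}}\bigr)+\mathrm{MHJ}(k,m,r)
\]
is a primitive recursion in $m$ whose step function lies in $\mathcal{E}^5$ (since $\mathrm{HJ}\in\mathcal{E}^5$). Closure of $\mathcal{E}^5$ under \emph{bounded} primitive recursion does not help unless you already possess an $\mathcal{E}^5$ bound for the result, which is precisely what is at issue; unrestricted iteration of an $\mathcal{E}^5$ function lands you in $\mathcal{E}^6$. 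To place $\mathrm{MHJ}$ in $\mathcal{E}^5$ one needs more than the black-box statement $\mathrm{HJ}\in\mathcal{E}^5$: one must either run Shelah's insensitivity argument directly for $m$-dimensional subspaces (this is exactly what the appendix does, with the $\mathcal{E}^4$ lemma iterated $k-1$ times), or exploit the finer fact that for each fixed $k$ Shelah's bound on $r\mapsto\mathrm{HJ}(k,r)$ lies in $\mathcal{E}^4$ and argue uniformity in $k$ separately. Your write-up flags this as ``the main delicate point'' but does not actually carry it out, so as written the $\mathcal{E}^5$ conclusion is not established.
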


However, we will need a mild strengthening of Theorem \ref{Hales_Jewett_mult_simple}.
To introduce this strengthening, let us denote for every subset $A$ of the natural numbers and every positive integer $\kappa$ the set of all subsets of $A$ with exactly $\kappa$ elements by $A^{(\kappa)}$.
In our case, the underling structure is the Cartesian product $\Lambda^N\times\{0,...,N-1\}^{(\kappa)}$.
First, for every $m$-dimensional variable word $w$ over $\Lambda$ of length $N$, let us set $\ell^w_i=\min\supp_w(v_i)$ for all $i=0,...,m-1$.
An $m$-dimensional $\kappa^*$combinatorial subspace is a set of the form $X=S\times\{\ell^w_i:i=0,...,m-1\}^{(\kappa)}$, where $w(v_1,...,v_{m-1})$ is an $m$-dimensional variable word over $\Lambda$ of length $N$ and $S$ the combinatorial subspace generated by $w$. In this case, we say that $w(v_0,...,v_{m-1})$ generates the $\kappa^*$combinatorial subspace $X$. Also, let us observe that for every $m$-dimensional $\kappa^*$combinatorial subspace $X$ there is unique $m$-dimensional variable word generating $X$. We consider the following special type of colorings.

\begin{defn}
  Let $\kappa,m,N$ be positive integers with $\kappa\mik m\mik N$ and let $\Lambda$ be a finite alphabet. Also let $w(v_0,...,v_{m-1})$ be an $m$-dimensional variable word over $\Lambda$ of length $N$ and let $X$ be the $\kappa^*$combinatorial subspace generated by $w$. We say that a finite coloring $c$ of $\Lambda^N\times\{0,...,N-1\}^{(\kappa)}$ is strongly $^*$insensitive in $X$ if
  \[c(w(a_0,...,a_{m-1}),\{\ell_i^w:i\in F\})=c(w(a_0',...,a_{m-1}'),\{\ell_i^w:i\in F\})\]
  for every choice of $F$ in $\{0,...,m-1\}^{(\kappa)}$ and $a_0,...,a_{m-1},a_0',...,a_{m-1}'$ in $\Lambda$ with $a_i=a_i'$ for all $i$ in $F$.
\end{defn}

It is clear that it is not expected the existence of a monochromatic $\kappa^*$combinatorial subspace given an arbitrary coloring. However, we have the following.

\begin{thm}
  \label{Hales_Jewett*}
  Let $k,\kappa,m,r$ be positive integers with $\kappa\mik m$. Then there exist integers $n_0,q_0,...,q_m$ with $0=q_0<q_1<...<q_m=n_0$ satisfying the following property. For every integer $N$ with $N\meg n_0$, every finite alphabet $\Lambda$ with $k$ elements and every $r$-coloring $c$ of $\Lambda^N\times\{0,...,N-1\}^{(\kappa)}$, there exists an $m$-dimensional $\kappa^*$combinatorial subspace $X$ such that $c$ is strongly $^*$insensitive in $X$.

  Moreover, if $w(v_0,...,v_{m-1})$ is the $m$-dimensional variable word generating $X$, then the wildcard set of $v_i$ in $w$ is contained in the interval $[q_i,q_{i+1})$ for all $i=0,...,m-1$. We denote the least such $n_0$ by $\mathrm{MHJ}^*(k,\kappa,m,r)$.

  Finally, the numbers $\mathrm{MHJ}^*(k,\kappa,m,r)$ are upper bounded by a primitive recursive function belonging to the class $\mathcal{E}^5$ of Grzegorczyk's hierarchy.
\end{thm}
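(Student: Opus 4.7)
The plan is to prove Theorem \ref{Hales_Jewett*} by induction on $m$, with $\kappa$ (subject to $\kappa\mik m$), $k$, and $r$ all free in the induction, using the ordinary multidimensional Hales--Jewett theorem (Theorem \ref{Hales_Jewett_mult_simple}) as the external workhorse. The base case $m=\kappa$ is immediate: the only $F\in\{0,\dots,m-1\}^{(\kappa)}$ is $\{0,\dots,m-1\}$ itself, so the hypothesis $a_i=a_i'$ for all $i\in F$ forces $w(a_0,\dots,a_{m-1})=w(a_0',\dots,a_{m-1}')$, and strong $^*$insensitivity is therefore automatic. One may take $n_0=m$ and $q_i=i$.

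For the inductive step I plan to build the generating variable word block-by-block, adding $v_{m-1}$ last and peeling off the last block $[q_{m-1},n_0)$ to reduce to a problem with $m-1$ variables. The required insensitivity on the last block bifurcates according to whether the index $m-1$ belongs to $F$: when $m-1\notin F$ the coloring must be insensitive to $a_{m-1}$, whereas when $m-1\in F$ it is allowed to depend on $a_{m-1}$ but must be insensitive to the remaining wildcards. I would regard $c$ as parametrized by the ``outside'' data on $[0,q_{m-1})$ (namely the values $a_0,\dots,a_{m-2}$ together with a subset of size $\kappa$ or $\kappa-1$ of that initial segment), and apply MHJ (Theorem \ref{Hales_Jewett_mult_simple}) inside $[q_{m-1},n_0)$ to produce a $1$-dimensional variable word $w_{m-1}(v_{m-1})$ over which $c$ is insensitive to $a_{m-1}$ whenever $m-1\notin F$. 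The residual coloring lives on $\Lambda^{q_{m-1}}$ paired with the outside subset, with palette enlarged by at most a factor of $r^k$. The inductive hypothesis applied to this enriched coloring, once for the parameter set $(m-1,\kappa)$ and (if $\kappa\meg 1$) once for $(m-1,\kappa-1)$, delivers the first $m-1$ variable blocks, yielding the positions $q_0<q_1<\cdots<q_{m-1}$.

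The main obstacle is the bookkeeping forced by the interaction between the block decomposition and the choice of $F$: each of the $\binom{m}{\kappa}$ sets $F$ imposes its own insensitivity constraint, and all of them must be satisfied by a single variable word. I plan to absorb these constraints collectively into the enlarged color palette at every induction step. A second subtlety is that the MHJ step on the last block must deliver a variable word that simultaneously controls all relevant outside configurations; this is exactly what the ``product coloring'' absorption achieves, at a multiplicative cost in the palette. Since each induction step consumes one application of MHJ and iterates $m-\kappa$ times, the resulting bound on $\mathrm{MHJ}^*(k,\kappa,m,r)$ remains primitive-recursive in Grzegorczyk's class $\mathcal{E}^5$, inheriting its complexity from MHJ.
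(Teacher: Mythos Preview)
Your approach is genuinely different from the paper's. The paper follows Shelah's scheme: it first proves an \emph{insensitivity lemma} (Lemma~\ref{Shelah_ins*}) showing that for any two letters $a,b\in\Lambda$ one can pass to an $m$-dimensional $\kappa^*$subspace in which the coloring is $\{a,b\}^*$insensitive, via a direct pigeonhole argument on the staircase words $(a,\dots,a,b,\dots,b)$; this costs $\mathcal{E}^4$. It then iterates the lemma $k-1$ times over the pairs $\{a_1,a_2\},\dots,\{a_{k-1},a_k\}$, yielding strong $^*$insensitivity and an $\mathcal{E}^5$ bound. There is no induction on $m$ and no call to Hales--Jewett as a black box.

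Your plan has a real gap at the point where you invoke the inductive hypothesis ``once for $(m-1,\kappa)$ and once for $(m-1,\kappa-1)$''. These two applications must produce a \emph{single} $(m-1)$-dimensional variable word on $[0,q_{m-1})$, so they have to be composed: apply one, then refine inside the resulting subspace with the other. But refining requires the first application to output a subspace of dimension at least $\mathrm{MHJ}^*(k,\kappa-1,m-1,r^{k})$, and your induction on $m$ only gives you the statement for target dimensions strictly below $m$, whereas $\mathrm{MHJ}^*(k,\kappa-1,m-1,r^{k})$ is typically far larger than $m$. As written, the induction does not close. (One can repair this by encoding the $(\kappa-1)$-problem into the $\kappa$-problem --- e.g.\ replace $c_0(u,G)$ by the vector $\bigl(c_0(u,G),(c_1(u,G\setminus\{g\}))_{g\in G}\bigr)$ and observe that $\kappa^*$insensitivity for this enriched coloring forces $(\kappa-1)^*$insensitivity for $c_1$ after intersecting over the possible extensions of a $(\kappa-1)$-set --- but this is a genuine additional idea, needs care at the boundary $m=\kappa+1$, and is not what you described.)

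Separately, the $\mathcal{E}^5$ claim is not justified by your scheme. Each of your $m-\kappa$ inductive steps invokes $\mathrm{MHJ}$, which is already $\mathcal{E}^5$ in its full set of parameters, with a color count that is exponential in the previous stage; iterating an $\mathcal{E}^5$ function a parameter number of times lands you in $\mathcal{E}^6$. The paper gets $\mathcal{E}^5$ precisely because its building block (the pigeonhole insensitivity lemma) sits one level lower, in $\mathcal{E}^4$, and it is that lower-level function that gets iterated.
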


The proof of Theorem \ref{Hales_Jewett*} is a straightforward modification of Shelah's proof \cite{Sh} of the Hales--Jewett Theorem. For the sake of completeness we include its proof in an appendix.
%Moreover, let us point out that Theorem \ref{Hales_Jewett*} follows easily by the block version of the Graham--Rothschild Theorem \cite{GR}. However, the bounds resulting by this approach are weaker, in particular, they belong to the class $\mathcal{E}^6$ of Grzegorczyk's hierarchy.
Theorem \ref{Hales_Jewett*} has the following easy consequence which is useful for us below.
\begin{cor}
  \label{Hales_Jewett_cor_v2}
  Let $k,\kappa,m,r$ be positive integers with $\kappa\mik m$. Then there exists a positive integer $q$ with the following property. For every integer $N$ with $N\meg mq$, every alphabet finite $\Lambda$ with at most $k$ elements, every collection $(G_j)_{j=0}^{\kappa-1}$ of non empty and pairwise disjoint subsets of $\{0,...,m-1\}$ and every $r$-coloring of $\Lambda^N\times\prod_{j=0}^{\kappa-1}\bigcup_{i\in G_j}I_i$, where $I_i=\{iq,...,(i+1)q-1\}$ for all $i=0,...,m-1$, there exists an $m$-dimensional variable word $w(v_0,...,v_{m-1})$ satisfying the following.
  \begin{enumerate}
    \item[(a)] The wildcard set of $v_i$ in $w$ is contained in $I_i$ for all $i=0,...,m-1$.
    \item[(b)] For every choice of $\alpha_0,...,\alpha_{m-1},\alpha'_0,...,\alpha'_{m-1}$ in $\Lambda$ and $(p_0,...,p_{\kappa-1})$ in $\prod_{i=0}^{\kappa-1}G_i$ with $\alpha_{p_i}=\alpha'_{p_i}$ for all $i=0,...,\kappa-1$, we have that
        \[c(w(\alpha_0,...,\alpha_{m-1}),(\ell^w_{p_i})_{i=0}^{\kappa-1})=c(w(\alpha'_0,...,\alpha'_{m-1}),(\ell^w_{p_i})_{i=0}^{\kappa-1}).\]
  \end{enumerate}
  We denote the least such $q$ by $\mathrm{Q}(k,\kappa,m,r)$.

  Finally, the numbers $\mathrm{Q}(k,\kappa,m,r)$ are upper bounded by a primitive recursive function belonging to the class $\mathcal{E}^5$ of Grzegorczyk's hierarchy.
\end{cor}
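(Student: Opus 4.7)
The plan is to deduce Corollary~\ref{Hales_Jewett_cor_v2} from Theorem~\ref{Hales_Jewett*} via an order-preserving bijection of $\{0,\dots,N-1\}$ that transports the possibly unequal-size blocks $[q_i,q_{i+1})$ from the theorem into the equal-size target intervals $I_i$ of the corollary. First I set $q=\mathrm{MHJ}^*(k,\kappa,m,r)$ and take $0=q_0<q_1<\dots<q_m=q$ to be the integers supplied by Theorem~\ref{Hales_Jewett*}. Since $q_{i+1}-q_i\leq q_m=q$, the formula $\phi(x)=iq+(x-q_i)$ for $x\in[q_i,q_{i+1})$ defines an order-preserving injection of $\{0,\dots,q-1\}$ into $\bigcup_{i=0}^{m-1}I_i$ sending each $[q_i,q_{i+1})$ into $I_i$. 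I extend $\phi$ arbitrarily to a bijection of $\{0,\dots,N-1\}$; this is possible because $N\geq mq\geq q$.

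Given the coloring $c$ of $\Lambda^N\times\prod_{j=0}^{\kappa-1}\bigcup_{i\in G_j}I_i$, I will define an auxiliary $r$-coloring $\tilde c$ of $\Lambda^N\times\{0,\dots,N-1\}^{(\kappa)}$ by the following rule. Given $(u,F)$, look at $\phi(F)$; if it intersects each set $\bigcup_{i\in G_j}I_i$ (for $j=0,\dots,\kappa-1$) in exactly one point, then, using the pairwise disjointness of the $G_j$'s, those points determine a unique tuple $(y_0,\dots,y_{\kappa-1})\in\prod_{j=0}^{\kappa-1}\bigcup_{i\in G_j}I_i$, and I set $\tilde c(u,F)=c(u\circ\phi^{-1},(y_j)_{j=0}^{\kappa-1})$; for all other $(u,F)$ I assign a fixed default color. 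Applying Theorem~\ref{Hales_Jewett*} to $\tilde c$ then produces an $m$-dimensional variable word $\tilde w(v_0,\dots,v_{m-1})$ of length $N$ with $\supp_{\tilde w}(v_i)\subseteq[q_i,q_{i+1})$ and strong $^*$insensitivity in its generated $\kappa^*$combinatorial subspace.

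Setting $w=\tilde w\circ\phi^{-1}$, the wildcard set $\supp_w(v_i)=\phi(\supp_{\tilde w}(v_i))$ lies in $I_i$, yielding (a); moreover, since $\phi$ is increasing on each $[q_i,q_{i+1})$, we have $\ell^w_i=\phi(\ell^{\tilde w}_i)$ and $w(\vec\alpha)=\tilde w(\vec\alpha)\circ\phi^{-1}$. For any $(p_0,\dots,p_{\kappa-1})\in\prod_{j=0}^{\kappa-1}G_j$, the set $\phi(\{\ell^{\tilde w}_{p_j}:j=0,\dots,\kappa-1\})$ is a genuine transversal of the $\bigcup_{i\in G_j}I_i$'s, so the defining equation of $\tilde c$ converts the theorem's insensitivity applied at $F_0=\{p_0,\dots,p_{\kappa-1}\}$ directly into statement (b); non-transversal $F_0$'s carry only the default color and pose no obstruction. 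The bound $\mathrm{Q}(k,\kappa,m,r)\leq\mathrm{MHJ}^*(k,\kappa,m,r)$ is then inherited, placing $\mathrm{Q}$ in $\mathcal{E}^5$. The only mildly delicate step is the bookkeeping that matches each unordered $\kappa$-subset $F$ with the ordered coordinates of a tuple in $\prod_{j=0}^{\kappa-1}\bigcup_{i\in G_j}I_i$; this is handled cleanly precisely because the $G_j$'s are pairwise disjoint, so every $\phi(F)$ that meets each $\bigcup_{i\in G_j}I_i$ in a single point determines a unique such tuple.
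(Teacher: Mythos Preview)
Your proposal is correct and is precisely the approach the paper intends: the paper does not spell out a proof but simply states that Corollary~\ref{Hales_Jewett_cor_v2} ``is an easy consequence of Theorem~\ref{Hales_Jewett*}'' and records the bound $\mathrm{Q}(k,\kappa,m,r)\mik\mathrm{MHJ}^*(k,\kappa,m,r)$, which is exactly what your block-relabeling argument establishes. One small point you glossed over is the passage from alphabets of size exactly $k$ (as in Theorem~\ref{Hales_Jewett*}) to alphabets of size at most $k$ (as in the corollary); this is handled by a standard retraction $\pi\colon\Lambda'\to\Lambda$ and does not affect the wildcard sets or the bound.
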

As mentioned above, Corollary \ref{Hales_Jewett_cor_v2} is an easy consequence of Theorem \ref{Hales_Jewett*} and in particular we have that $\mathrm{Q}(k,\kappa,m,r)\mik\mathrm{MHJ}^*(k,\kappa,m,r)$ for every choice of $k,\kappa,m$ and $r$ as in the relevant statements.
%\begin{cor}
%  \label{Hales_Jewett_hered_cor}
%  Let $k,\kappa,\lambda,m,r$ be positive integers with $\kappa\lambda\mik m$. Then there exists a positive integer $q$ with the following property. For every integer $N$ with $N\meg mq$, every alphabet $\Lambda$ with at most $k$ elements, every collection $(G_j)_{j=0}^{k-1}$ of disjoint subsets of $\{0,...,m-1\}$ of cardinality $\lambda$ and every $r$-coloring of $\Lambda^N\times\bigcup_{H\subseteq\{0,...,k-1\}}\prod_{j\in H}\bigcup_{i\in G_j}I_i$, where $I_i=\{iq,...,(i+1)q-1\}$ for all $i=0,...,m-1$, there exists an $m$-dimensional variable word $w(v_0,...,v_{m-1})$ satisfying the following.
%  \begin{enumerate}
%    \item[(a)] The wildcard set of $v_i$ in $w$ is contained in $I_i$ for all $i=0,...,m-1$.
%    \item[(b)] For every choice of $\alpha_0,...,\alpha_{m-1},\alpha'_0,...,\alpha'_{m-1}$ in $\Lambda$, subset $H$ of $\{0,...,k-1\}$ and $(p_i)_{i\in H}$ in $\prod_{i\in H}G_i$ with $\alpha_{p_i}=\alpha'_{p_i}$ for all $i$ in $H$, we have that
%        \[c(w(\alpha_0,...,\alpha_{m-1}),(\ell^w_{p_i})_{i\in H})=c(w(\alpha'_0,...,\alpha'_{m-1}),(\ell^w_{p_i})_{i\in H}).\]
%  \end{enumerate}
%  We denote the least such $q$ by $\mathrm{Q}_{\mathrm{her}}(k,\kappa,\lambda,m,r)$.

%  Finally, the numbers $\mathrm{Q}_{\mathrm{her}}(k,\kappa,\lambda,m,r)$ are upper bounded by a primitive recursive function belonging to the class $\mathcal{E}^5$ of Grzegorczyk's hierarchy.
%\end{cor}

\section{Skew, complete skew and vector complete skew subtrees}
In this section we gather all the tree related notions and notation.

\subsection{Tree notation}
Let us fix positive integers $b$ and $n$. We denote by $b^{<n}$ (resp. $b^{\mik n}$) the set of all sequences in $\{0,...,b-1\}$ of length less than (resp. at most) $n$. We also denote by $\emptyset$ the empty sequence, as well as, the empty set. For every element $s$ in $b^{<n}$ we denote by $|s|$ its length, while for every $i$ in $\{0,...,|s|-1\}$ we denote by $s(i)$ the $i$-th element of $s$ and for every $i$ in $\{0,...,|s|\}$ we denote by $s\upharpoonright i$ the initial segment of $s$ of length $i$. In particular, $s = ( s(0),..., s(|s|-1) )$ for each non empty $s$ in $b^{<n}$. Moreover, for every $s,t$ in $b^{<n}$, we denote by $s\wedge t$ their longest common initial segment.

Let us observe that $b^{<n}$ with the initial segment ordering $\sqsubseteq$ is a uniquely rooted and balanced tree, that is, all maximal chains have the same length, with fixed branching number $b$, i.e. every non-maximal node has exactly $b$ immediate successors.
We will also consider the lexicographical order on the set $b^{<n}$ defined as follows. For every $s,t$ in $b^{<n}$, we set $s\lex t$ if either
\begin{enumerate}
  \item[(i)] we have that $s\sqsubseteq t$, or
  \item[(ii)] we have that $s\not\sqsubseteq t$ and $t\not\sqsubseteq s$ and $s(i)<t(i)$, where $i=|s\wedge t|$.
\end{enumerate}
Finally, we consider one more linear order $\preccurlyeq$ on $b^{<n}$ defined as follows. For every $s,t$ in $b^{<n}$, we set $s\preccurlyeq t$ if either
\begin{enumerate}
  \item[(i)] $|s|< |t|$, or
  \item[(ii)] $|s|=|t|$ and $t\lex s$.
\end{enumerate}

A subtree $S$ of $b^{<n}$ is a subset of $b^{<n}$ endowed with $\sqsubseteq$.
We say that a subtree $S$ of $b^{<n}$ is a subtree of some subtree $T$ of $b^{<n}$, if $S$ is a subset of $T$.
For every subtree $S$ of $b^{<n}$ and $s$ in $S$, we define the following. The set of the predecessors of $s$ inside $S$ is defined as $\mathrm{Pred}_S(s)=\{t\in S:\;t\sqsubset s\}$ and the set of the successors of $s$ inside $S$ is defined as $\mathrm{Succ}_{S}(s)=\{t\in S:\;s\sqsubseteq t\}$. Moreover, we consider the set of the immediate successors $\mathrm{ImSucc}_S(s)$ of $s$ in $S$ to be the set of all $t$ in $S$ such that $s\sqsubset t$ and there is no $t'$ in $S$ satisfying $s\sqsubset t'\sqsubset t$. The height of $s$ in $S$, denoted by $\mathrm{h}_S(s)$, is the cardinality of the set $\mathrm{Pred}_S(s)$, while the height of $S$, denoted by $\mathrm{h}_S$, is the maximal length of a chain in $S$. If $m$ is a non negative integer with $m<\mathrm{h}_S$, we define the $m$-th level of $S$, denoted by $S(m)$ to be the set of all $t$ in $S$ such that $\mathrm{h}_S(t)=m$. Finally, the level set of $S$, denoted by $L(S)$, is the set $\{m<n:b^m\cap S\neq\emptyset\}$.

\subsubsection{Skew subtrees}
A skew subtree $S$ of $b^{<n}$ is a subtree that satisfies the following.
\begin{enumerate}
  \item[(i)] $S$ has a minimum, called the root of $S$ and denoted by $\mathrm{r}_S$.
%  \item[(ii)] For every non maximal node $s$ in $S$ and $i$ in $\{0,...,b-1\}$, there is unique $t$ in $\mathrm{ImSucc}_S(s)$ such that $s^\con (i)\sqsubseteq t$.
  \item[(ii)] For every two nodes $s,s'$ in $S$ with $\mathrm{h}_S(s)=\mathrm{h}_S(s')$ and $s\lex s'$, we have that $|s|\mik|s'|$.
  \item[(iii)] For every two nodes $s,s'$ in $S$ with $\mathrm{h}_S(s)<\mathrm{h}_S(s')$, we have that $|s|<|s'|$.
  \item[(iv)] There exist $i_S$ in $\{0,...,b-1\}$ and $s_S$ in $S$ satisfying the following.
  \begin{enumerate}
    \item[(a)] For every $s'$ in $S$ with $s'\preccurlyeq s_S$ and $s'\neq s_S$, we have that for every $i$ in $\{0,...,b-1\}$, there is unique $t$ in $\mathrm{ImSucc}_S(s')$ such that $s'^\con (i)\sqsubseteq t$. Thus, in particular, the set $\mathrm{ImSucc}_S(s')$ is of cardinality $b$.
    \item[(b)] For every $s'$ in $S$ with $s_S\preccurlyeq s'$ and $s'\neq s_S$, we have that the set $\mathrm{ImSucc}_S(s')$ is empty
    \item[(c)] The set $\mathrm{ImSucc}_S(s_S)$ is of cardinality $i_S+1$ and for every
        $i$ in $\{0,...,i_s\}$, there is unique $t$ in $\mathrm{ImSucc}_S(s_S)$ such that $s_S^\con (i)\sqsubseteq t$.
  \end{enumerate}
\end{enumerate}
Let us observe that by the condition (iv) of the definition of a skew subtree, we have that every skew subtree has at least two nodes. We will also view singletons as skew subtrees.
%From now on, whenever we refer to a subtree of $b^{<n}$, we mean either a strong or a skew subtree.
%Clearly every strong subtree is also a skew subtree.

\subsubsection{Complete skew subtrees}
We will say that a skew subtree of $b^{<n}$  is $k$-complete, where $k$ is a positive integer, if every maximal chain of itself is of length $k$ and every non maximal node has exactly $b$ immediate successors in the subtree. Moreover, we call a skew subtree complete, if it is $k$-complete for some $k$.
Let us also observe that if $S$ is a $k$-complete skew subtree, then $\mathrm{h}_S = k$ and there exists unique isomorphism $\mathrm{I}_S:b^{<k}\to S$, that is, $\mathrm{I}_S$ is bijective and preserves $\sqsubseteq, \lex$ and $\preccurlyeq$.

\subsection{Vector tree notation}
Let us fix positive integers $b,n,d$. We define $\mathbf{T}_{b,n}^d$ to be the $d$-tuple having each component equal to $b^{<n}$. We view $\mathbf{T}_{b,n}^d$ as a vector tree. A vector subtree of $\mathbf{T}_{b,n}^d$ is a $d$-tuple such that each component is a subtree of $b^{<n}$. For $\mathbf{S}_1, \mathbf{S}_2$ vector subtrees of $\mathbf{T}_{b,n}^d$, we say that $\mathbf{S}_1$ is a vector subtree of $\mathbf{S}_2$, if every component of $\mathbf{S}_1$ is a subtree of the corresponding component of  $\mathbf{S}_2$.

\subsubsection{Vector skew subtrees}
Let $k$ be a positive integer. A vector subtree $\mathbf{S}=(S_1,...,S_d)$ of $\mathbf{T}_{b,n}^d$ is called a vector skew subtree of $\mathbf{T}_{b,n}^d$ of height $k$, if each $S_i$ is a skew subtree of $b^{<n}$ and the following are satisfied.
\begin{enumerate}
  \item[(i)] There exists $j_0$ in $[d]$ such that $S_i$ is $k$-complete skew subtree of $b^{<n}$ for all $i$ in $[d]$ with $i<j_0$ and $S_i$ is $(k-1)$-complete skew subtree of $b^{<n}$ for all $i$ in $[d]$ with $i>j_0$.
      Moreover, $S_{j_0}$ is of height $k$.
  \item[(ii)] For every triple of integers $i,j,m$ with $1\mik i<j\mik d$ and $0\mik m<k$, we have that $|s_i|\mik |s_j|$ for each $s_i,s_j$ satisfying $s_i\in S_i(m)$ and $s_j\in S_j(m)$.
  \item[(iii)] For every integer $m$ with $0\mik m<k-1$, we have that $|s|<|s'|$ for each $s,s'$ satisfying $s\in \bigcup_{i=1}^dS_i(m)$ and $s'\in \bigcup_{i=1}^dS_i(m+1)$.
\end{enumerate}
\subsubsection{Vector complete skew subtrees}
A vector skew subtree $\mathbf{S}=(S_1,...,S_d)$ of $\mathbf{T}_{b,n}^d$ is called vector $k$-complete if
each $S_i$ is a $k$-complete skew subtree of $b^{<n}$.
A vector subtree is called vector complete skew subtree of $\mathbf{T}_{b,n}^d$,
if it is a vector $k$-complete skew subtree of $\mathbf{T}_{b,n}^d$ for some positive integer $k$.
If $\mathbf{S}=(S_1,...,S_d)$ is either a vector strong subtree or a vector skew subtree of $\mathbf{T}^d_{b,n}$, then we set $\mathrm{h}_\mathbf{S}=\mathrm{h}_{S_1}$. Finally, for every positive integer $k$ and every $\mathbf{S}$ vector complete skew subtree of $\mathbf{T}_{b,n}^d$, we denote by $\mathcal{CT}_k(\mathbf{S})$ the set of all vector $k$-complete skew subtrees of $\mathbf{S}$.

\begin{rem}
  An important property of the vector complete skew subtrees is the following. Fix positive integers $b,n$ and pick a non-maximal node $s$ of $b^{<n}$. Let $T=\{t\in b^{<n}:t \preccurlyeq s\}$ and $(t_i)_{i=1}^d$ the $\sqsubseteq$-minimal elements of $b^{<n}\setminus T$ written in $\preccurlyeq$-increasing order. Also let $m,k$ be positive integers with $k\mik m\mik n-|s|$ and $(S_i)_{i=1}^d$ a vector $k$-complete skew subtree of $\mathbf{T}_{b,m}^d$. Then the set
  \[S=T\cup\bigcup_{i=1}^d\{t_i^\con s:s\in S_i\}\]
  forms a skew subtree of $b^{<n}$. Moreover, if $T'$ is a skew subtree of $T$ then $S$ contains a complete skew subtree $S'$ of itself of height $\mathrm{h}_{T'}+k-1$ such that $S'\cap T=T'$.
\end{rem}

We will also make use of the following Ramsey type result for complete skew subtrees. It follows easily using Milliken's theorem for trees (see \cite{Mi2}) and the notion of envelope (see \cite{To}). 

\begin{thm}
  \label{thm:RamseyCompleteSkewSubtrees}
  Let $b,k,m,d,r$ be positive integers such that $k\mik m$. Then there exists a positive integer $n_0$ with the following property. For every integer $n\meg n_0$ and every coloring of the set $\mathcal{CT}_k(\mathbf{T}^d_{b,n})$ with $r$-colors
  there exists $\mathbf{S}$ in $\mathcal{CT}_m(\mathbf{T}^d_{b,n})$ such that the set $\mathcal{CT}_k(\mathbf{S})$ is
  monochromatic. We denote the least such $n_0$ by $\mathrm{CT}(k,m,b,d,r)$.
\end{thm}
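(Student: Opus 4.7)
The plan is to derive the statement from Milliken's theorem for finite products of finitely branching trees (see \cite{Mi2}) via the envelope technique of \cite{To}, exactly in the spirit sketched by the remark preceding the theorem. The main engine is an envelope construction: given $\mathbf{S}=(S_1,\ldots,S_d)\in\mathcal{CT}_k(\mathbf{T}_{b,n}^d)$, let $L$ be the union of the level sets $L(S_i)$, and let $\mathrm{env}(\mathbf{S})$ be the vector strong subtree of $\mathbf{T}_{b,n}^d$ whose level set in each coordinate equals $L$ and whose components contain the corresponding components of $\mathbf{S}$. The height $K$ of $\mathrm{env}(\mathbf{S})$ satisfies $K\leq d\cdot(1+b+\cdots+b^{k-1})$, a bound depending only on $k,b,d$. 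The way $\mathbf{S}$ sits inside $\mathrm{env}(\mathbf{S})$ is then described by a finite combinatorial datum (a \emph{pattern}) $p(\mathbf{S})$ lying in a finite set $\mathcal{P}=\mathcal{P}(k,b,d)$ that is invariant under isomorphisms between vector strong subtrees; in particular, for each vector $K$-strong subtree $\mathbf{E}$, the map $\mathbf{S}\mapsto p(\mathbf{S})$ bijects $\{\mathbf{S}\in\mathcal{CT}_k(\mathbf{T}_{b,n}^d):\mathrm{env}(\mathbf{S})=\mathbf{E}\}$ with $\mathcal{P}$.

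Given an $r$-coloring $c$ of $\mathcal{CT}_k(\mathbf{T}_{b,n}^d)$, I would define an $r^{|\mathcal{P}|}$-coloring $\tilde c$ of the vector $K$-strong subtrees of $\mathbf{T}_{b,n}^d$ by $\tilde c(\mathbf{E})=\bigl(p\mapsto c(\mathbf{S}_{\mathbf{E},p})\bigr)$, where $\mathbf{S}_{\mathbf{E},p}$ is the unique skew subtree with envelope $\mathbf{E}$ and pattern $p$. Applying the finite product form of Milliken's theorem for $d$-fold products of $b^{<n}$ to $\tilde c$ yields $n_0=n_0(k,m,b,d,r)$ such that, for every $n\geq n_0$, there is a vector $M$-strong subtree $\mathbf{U}$ of $\mathbf{T}_{b,n}^d$ which is $\tilde c$-monochromatic, with $M$ chosen large enough to accommodate an $m$-complete skew subtree together with its envelope.

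Finally, I would pick $\mathbf{S}\in\mathcal{CT}_m(\mathbf{U})$, which is possible since every sufficiently tall vector strong subtree contains vector $m$-complete skew subtrees realizing each pattern; the existence of such $\mathbf{S}$ follows directly from the remark preceding the theorem applied inside $\mathbf{U}$. By canonicity of the pattern assignment one can arrange that every $\mathbf{S}'\in\mathcal{CT}_k(\mathbf{S})$ has the \emph{same} pattern $p_0\in\mathcal{P}$ inside its envelope $\mathrm{env}(\mathbf{S}')$; this is automatic once the enumeration of $\mathcal{P}$ commutes with the canonical isomorphisms $\mathrm{I}_{S_i}:b^{<m}\to S_i$, but if not, a further pigeonhole over the $|\mathcal{P}|$ possible patterns inside $\mathbf{U}$ (at the price of inflating $M$) handles it. Since each $\mathrm{env}(\mathbf{S}')$ is a vector $K$-strong subtree of $\mathbf{U}$, we have $c(\mathbf{S}')=\tilde c(\mathrm{env}(\mathbf{S}'))(p_0)$, a value independent of $\mathbf{S}'$, so $\mathcal{CT}_k(\mathbf{S})$ is $c$-monochromatic.

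The principal obstacle is the verification of the envelope properties in the vector setting: namely, that $\mathrm{env}(\mathbf{S})$ is a well-defined vector strong subtree of uniformly bounded height, and that the classification of skew subtrees inside a fixed envelope by a pattern set $\mathcal{P}$ is canonical enough for the final pigeonhole step to go through. These are by now standard applications of the envelope method, which is why the paper defers them with a reference; the combinatorics is a little more intricate in the product case because one must simultaneously control the relative level alignment of the $d$ components, but the argument follows the template established in \cite{To} and delivers the bound $\mathrm{CT}(k,m,b,d,r)$ explicitly in terms of the Milliken function for $d$-fold products of $b^{<n}$.
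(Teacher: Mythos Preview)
Your proposal follows exactly the route the paper indicates: it deduces the result from Milliken's theorem for (products of) trees via the envelope technique of \cite{To}, and the paper itself gives no further details beyond this same citation.

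One point deserves tightening. The claim that every $\mathbf{S}'\in\mathcal{CT}_k(\mathbf{S})$ realizes a single pattern $p_0$ is not automatic, and the suggested ``further pigeonhole over $|\mathcal{P}|$'' does not salvage it: for a generic $\mathbf{S}$, the set $\mathcal{CT}_k(\mathbf{S})$ realizes \emph{several} patterns simultaneously (already for $b=2$, $d=1$, $k=2$ one sees envelope heights $2$ and $3$, and within height $3$ two sub-patterns according to $c_1(|c_0|)$), so no amount of pigeonholing on $\mathbf{U}$ alone forces monochromaticity. The correct fix is to choose $\mathbf{S}\in\mathcal{CT}_m(\mathbf{U})$ with all nodes at pairwise distinct $\mathbf{U}$-levels and with ``leftmost'' paths between consecutive nodes (i.e., $s(\ell)=0$ whenever $\ell$ is not the level of a proper $\mathbf{S}$-ancestor of $s$); a short computation then shows that every $\mathbf{S}'\in\mathcal{CT}_k(\mathbf{S})$ has the maximal envelope height and the fixed ``all-zero'' sub-pattern, so that $c(\mathbf{S}')=\gamma(p_0)$ as you want. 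With this adjustment the argument goes through and coincides with what the paper has in mind.
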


Actually, one can obtain primitive recursive bounds for the numbers $\mathrm{CT}(k,m,b,d,r)$ using the work of Sokić in \cite{Sok}.

\section{A Hales-Jewett type theorem for mixed products of words indexed by trees.}
\subsection{Spaces of Word and subspaces} Let $\Lambda$ be a finite alphabet. We consider three different types of spaces. Let $b,n$ be positive integers. We set $\w(b,n,\Lambda)=\Lambda^{(b^{<n})}$, that is the set of all maps from $b^{<n}$ into $\Lambda$. We also set
\[\w^\uparrow(b,n,\Lambda)=\w(b,n,\Lambda)\times b^n\;\text{and}\;\w^\bullet(b,n,\Lambda)=\w(b,n,\Lambda)\times b^{<n}.\]

In order to define variable words we reserve for every $s$ in $b^{<n}$ a distinct symbol $v_s$ not belonging to $\Lambda$ that we view as a variable.
Let $k$ be a positive integer and $T$ a $k$-complete skew subtree of $b^{<n}$. A $T$-variable word $f$ on $\w(b,n,\Lambda)$ is a map from $b^{<n}$ into $\Lambda\cup\{v_t:t\in T\}$ such that for every $t$ in $T$ the set $f^{-1}(v_t)$ is non empty and has $t$ as a $\sqsubseteq$-minimum. We denote by $\w_{v,T}(b,n,\Lambda)$ the set of all $T$-variable words on $\w(b,n,\Lambda)$.
For a positive integer $k$, we denote by $\w_{v,k}(b,n,\Lambda)$
the union of $\w_{v,T}(b,n,\Lambda)$ over all possible $k$-complete skew subtrees $T$ of $b^{<n}$. Finally, we denote by $\w_v(b,n,\Lambda)$ the union of $\w_{v,k}(b,n,\Lambda)$ over all positive integers $k$ with $k\mik n$.
We view the elements of $\w_v(b,n,\Lambda)$ as variable words.
Observe that for every $f$ in $\w_v(b,n,\Lambda)$, there exists unique $k$, which we denote by $\mathrm{h}(f)$, and unique $k$-complete skew subtree $T$ of $b^{<n}$, which we denote by $\ws(f)$ such that $f$ is a $T$-variable word.

As usual, for variable words we consider substitutions. Let $f$ in $\w_v(b,n,\Lambda)$.
If $(\alpha_t)_{t\in \ws(f)}\in \Lambda^{\ws(f)}$, then we denote by $f\big((\alpha_t)_{t\in \ws(f)}\big)$ the constant word resulting by substituting each occurrence of $v_t$ in $f$ by $\alpha_t$ for all $t$ in $\ws(f)$.
Moreover, we define the span of $f$ as
\[[f]_\Lambda=\{f\big((\alpha_t)_{t\in \ws(f)}\big): (\alpha_t)_{t\in \ws(f)}\in \Lambda^{\ws(f)}\}.\]
We view the span of a variable word as a combinatorial subspace of $\w(b,n,\Lambda)$.

The subspaces of $\w^\uparrow(b,n,\Lambda)$ are described by pairs of the form $(f,X)$, where $f$ belongs to  $\w_v(b,n,\Lambda)$ and $X$ is a subset of $b^n$ of cardinality $b^{\mathrm{h}(f)}$, such that $\ws(f)\cup X$ is a skew subtree of $b^{\mik n}$. We denote the set of all such pairs as $W_v^\uparrow(b,n,\Lambda)$. For $(f,X)$ in $W_v^\uparrow(b,n,\Lambda)$, we define its span as
\[[(f,X)]_\Lambda^\uparrow=[f]_\Lambda\times X.\]

The subspaces of $\w^\bullet(b,n,\Lambda)$ are described again by variable words. In particular, if $f$ belongs to $\w_v(b,n,\Lambda)$, then we define
\[[f]_\Lambda^\bullet=[f]_\Lambda\times\ws(f).\]

Actually, we deal with mixed products of the above spaces. More precisely, let us also fix some positive integer $d$ and a partition $\mathbf{D}=(D_0,D_1,D_2)$ of $[d]$. We define
\[\w^d(b,n,\Lambda)=\prod_{i\in [d]}\w(b,n,\Lambda)\]
and
\[\w^\mathbf{D}(b,n,\Lambda)=\Big(\prod_{i\in D_0}\w(b,n,\Lambda)\Big)\times
\Big(\prod_{i\in D_1}\w^\uparrow(b,n,\Lambda)\Big)\times\Big(\prod_{i\in D_2}\w^\bullet(b,n,\Lambda)\Big).\]
Observe that the set $\w^\mathbf{D}(b,n,\Lambda)$ can naturally be identified with the Cartesian product
$\w^d(b,n,\Lambda)\times(b^n)^{D_1}\times(b^{<n})^{D_2}$. More precisely this identification is achieved by corresponding to
each element $((w_i)_{i\in D_0}, (w_i,x_i)_{i\in D_1}, (w_i,t_i)_{i\in D_2} )$ of the set $\w^\mathbf{D}(b,n,\Lambda)$ the
triple $((w_i)_{i\in[d]},(x_i)_{i\in D_1},(t_i)_{i\in D_2})$. We keep in mind this identification and when we refer to
element of $\w^\mathbf{D}(b,n,\Lambda)$, we describe them as triples of the form
\[((w_i)_{i\in[d]},(x_i)_{i\in D_1},(t_i)_{i\in D_2}),\]
where $w_i$ belongs to $\w(b,n,\Lambda)$ for each $i$ in $[d]$, $x_i$ belongs to $b^n$ for each $i$ in $D_1$ and
$t_i$ belongs to $b^{<n}$ for each $i$ in $D_2$.

A variable word of $\w^d(b,n,\Lambda)$ is a $d$-tuple $(f_i)_{i=1}^d$ of variable words, i.e. of elements of $\w_v(b,n,\Lambda)$, such that $(\ws(f_i))_{i=1}^d$ forms a vector complete skew subtree of $\mathbf{T}_{b,n}^d$. The span of a variable word $(f_i)_{i=1}^d$ of $\w^d(b,n,\Lambda)$ is defined as
\[[(f_i)_{i=1}^d]_\Lambda=\prod_{i\in[d]}[f_i]_\Lambda.\]
A variable word of $\w^\mathbf{D}(b,n,\Lambda)$ is a pair $\big((f_i)_{i=1}^d,(X_i)_{i\in D_1}\big)$ such that $(f_i)_{i=1}^d$ is a variable word of $\w^d(b,n,\Lambda)$ and $(f_i,X_i)$ belongs to $\w_v^\uparrow(b,n,\Lambda)$ for all $i$ in $D_1$. Let us denote the set of all such pairs by $\w_v^\mathbf{D}(b,n,\Lambda)$. Finally, for $\big((f_i)_{i=1}^d,(X_i)_{i\in D_1}\big)$ in $\w_v^\mathbf{D}(b,n,\Lambda)$, we define its span as
\[\big[\big((f_i)_{i=1}^d,(X_i)_{i\in D_1}\big)\big]_\Lambda=
\Big(\prod_{i\in D_0}[f_i]_\Lambda\Big)\times
\Big(\prod_{i\in D_1}[(f_i,X_i)]^\uparrow_\Lambda\Big)\times\Big(\prod_{i\in D_2}[f_i]^\bullet_\Lambda\Big)\]
and again when we refer to elements of $\big[\big((f_i)_{i=1}^d,(X_i)_{i\in D_1}\big)\big]_\Lambda$, we describe them as triples of the form
\[((w_i)_{i\in[d]},(x_i)_{i\in D_1},(t_i)_{i\in D_2}),\]
where $w_i$ belongs to $[f_i]_\Lambda$ for each $i$ in $[d]$, $x_i$ belongs to $X_i$ for each $i$ in $D_1$ and
$t_i$ belongs to $\ws(f_i)$ for each $i$ in $D_2$.
We also view such spans as combinatorial subspaces of $\w^\mathbf{D}(b,n,\Lambda)$.
We say that $\big[\big((f_i)_{i=1}^d,(X_i)_{i\in D_1}\big)\big]_\Lambda$ is the combinatorial subspace generated by
$\big((f_i)_{i=1}^d,(X_i)_{i\in D_1}\big)$ and its dimension is $\mathrm{h}(f_1)$.

\subsection{Hales--Jewett for mixed products} The main result of this section is Theorem \ref{tree_HJ} below.
Roughly speaking, the theorem states that given any arbitrary finite coloring of $\w^\mathbf{D}(b,n,\Lambda)$,
there exists a combinatorial subspace of $\w^\mathbf{D}(b,n,\Lambda)$ such that the restriction of the color on
the subspace becomes ``canonical''. The next definition describes the appropriate notion of ``canonical''
color required in our case.

\begin{defn}
  Let $d,b,n$ be positive integers, let $\mathbf{D}=(D_0,D_1,D_2)$ be a partition of $[d]$ and let $\Lambda$ be a finite alphabet.
  Also, let $c$ be a finite coloring of $\w^\mathbf{D}(b,n,\Lambda)$ and let $((f_i)_{i\in[d]},(X_i)_{i\in D_1})$ be an element of $\w^\mathbf{D}_v(b,n,\Lambda)$.
  We say that the combinatorial subspace generated by $((f_i)_{i\in[d]},(X_i)_{i\in D_1})$ is $c$-good, if
  for every choice of elements $((w_i)_{i\in[d]}, (x_i)_{i\in D_1}, (t_i)_{i\in D_2})$ and $((w'_i)_{i\in[d]}, (x'_i)_{i\in D_1}, (t'_i)_{i\in D_2} )$ in the combinatorial subspace $\big[\big((f_i)_{i=1}^d,(X_i)_{i\in D_1}\big)\big]_\Lambda$ with $t_i=t'_i$ and $w_i(t_i)=w'_i(t'_i)$ for all $i$ in $D_2$, we have that
\[c(((w_i)_{i\in[d]}, (x_i)_{i\in D_1}, (t_i)_{i\in D_2} ))=
c(((w'_i)_{i\in[d]}, (x'_i)_{i\in D_1}, (t'_i)_{i\in D_2} )).\]
\end{defn}

\begin{defn}
  Let $d_0,d_1,d_2$ be non negative integers with $d_0+d_1+d_2\meg1$ and let $b,\ell,k,r$ be positive integers. We say that $\mathcal{P}(d_0,d_1,d_2,b,\ell,k,r)$ holds if there exists a positive integer
  $n_0$ with the following property. For every finite alphabet $\Lambda$ of cardinality $\ell$, every integer $n$ with $n\meg n_0$, every partition $\mathbf{D}=(D_0,D_1,D_2)$ of $[d_0+d_1+d_2]$ with $|D_i|=d_i$ for all $i=0,1,2$
and every $r$-coloring $c$ of $\w^\mathbf{D}(b,n,\Lambda)$ there exists $\big((f_i)_{i=1}^d,(X_i)_{i\in D_1}\big)$ in $\w_v^\mathbf{D}(b,n,\Lambda)$ generating a $k$-dimensional $c$-good combinatorial subspace.
We denote the least such $n_0$ by $\mathrm{MTHJ}(d_0,d_1,d_2,b,\ell,k,r)$.
\end{defn}

\begin{thm}
  \label{tree_HJ}
  For every choice of non negative integers  $d_0,d_1,d_2$ with $d_0+d_1+d_2\meg1$ and every choice of positive integers $b,\ell,k,r$, we have that
  $\mathcal{P}(d_0,d_1,d_2,b,\ell,k,r)$ holds.
\end{thm}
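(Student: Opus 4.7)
The plan is to prove $\mathcal{P}(d_0,d_1,d_2,b,\ell,k,r)$ by double induction, the outer on $k$ and the inner on $d_2$ (with $d_0, d_1, \ell, r$ allowed to grow). The $c$-good condition ignores the $D_0$ and $D_1$ coordinates entirely and depends on a $D_2$-coordinate only through the wildcard position $t_i$ and the value $w_i(t_i)$; this matches precisely the ``strong $^*$insensitivity'' notion of Corollary \ref{Hales_Jewett_cor_v2}, which therefore serves as the engine at the base of the induction.

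For the outer base case $k=1$, each $\ws(f_i)$ is a singleton $\{t_i\}$ and, for $i\in D_1$, one additionally needs a top set $X_i\subseteq b^n$ of size $b$ extending $t_i$. I would linearise $b^{<n}$ in $\preccurlyeq$-order, concatenate across the $d=d_0+d_1+d_2$ components, reserve a final block of positions for the data of the top sets $X_i$, and then apply Corollary \ref{Hales_Jewett_cor_v2} with $\kappa=d_2$ and $m=d$. The wildcard positions of the resulting variable word, read back through the enumeration, supply the roots $t_i$; the constant part of the variable word fills in the rest of $f_i$; and the strong $^*$insensitivity translates directly into $c$-goodness, since whenever the letters at the sensitive positions agree the colour is forced to agree.

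For the outer inductive step $k-1\Rightarrow k$, a $k$-complete skew subtree of $b^{<n}$ decomposes as a root together with $b$ subtrees attached at its immediate successors, each of these being a $(k-1)$-complete skew subtree sitting in the cone above the successor. Using the Remark following the definition of vector complete skew subtrees to glue the pieces, I would first apply the $k=1$ case on a short initial segment of $b^{<n}$ to produce the root data (and the $D_1$ top sets, stratified to the root's level) with an inflated alphabet and palette that encode all possible configurations on the subordinate branches; then, separately on each of the $b$ cones above the chosen immediate successors of the root, I would apply the inductive hypothesis for $k-1$ to colourings that encode the residual structure, exploiting Theorem \ref{thm:RamseyCompleteSkewSubtrees} at the end to coordinate the $b$ branches into a single vector complete skew subtree with uniform $c$-good behaviour.

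The main obstacle is this nested interaction between the tree structure of $\ws(f_i)$ and the wildcard-sensitive colouring: beyond the first level, the positions of the wildcard nodes depend on the branching choices made lower down, and the three species $D_0,D_1,D_2$ must be kept mutually compatible throughout. Maintaining all of these forces the alphabet and palette parameters in the inner induction to grow as iterated exponentials in the outer induction stage, which is the source of the non-primitive-recursive bounds foreshadowed after Theorem \ref{tree_GR}.
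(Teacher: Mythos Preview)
Your proposal has the right ingredients but the order of operations in the inductive step is backward, and this creates a genuine gap. You propose to first apply the $k=1$ case at the root with a palette inflated to encode ``all possible configurations on the subordinate branches,'' and only afterward to stabilise the branches via the $(k-1)$-hypothesis. But before the branches are stabilised, the set of branch configurations has cardinality depending on $n$, so the number of colours fed into the $k=1$ case is not bounded independently of $n$ and that base case cannot be invoked. The paper resolves this by working in the opposite order: it first applies the $k=1$ machinery (Lemma~\ref{tree_HJ_dim_1}) to the \emph{top} $n-M$ levels, with $db^M$ coordinates (one for each node at level $M$ in each of the $d$ trees), producing the last level of the target subspace; only then does it descend to the bottom $M$ levels and invoke the inductive hypotheses there.

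The second gap is that you announce an inner induction on $d_2$ but never explain how it enters the inductive step. The mechanism in the paper is this: once the top is stabilised, one must control the colour as the marked points $t_i$ (for $i\in D_2$) range over the full height-$(k+1)$ wildcard tree. When some $t_i$ sits at the top level, coordinate $i$ behaves on the bottom $M$ levels like a $D_1$-coordinate (one needs a branch reaching up to the chosen top node), so the relevant call on the bottom is to $\mathcal{P}(d_0, d_1+|\tilde D_2|, d_2-|\tilde D_2|, \ldots)$ for each $\tilde D_2\subseteq D_2$. The calls with $\tilde D_2\neq\emptyset$ have strictly smaller third argument but require arbitrarily large height parameter, which forces the double induction to be \emph{outer on $d_2$ and inner on $k$}, not outer on $k$ as you state; with your ordering the hypotheses for $(d_2',k')$ with $d_2'<d_2$ and $k'$ large are simply unavailable. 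Finally, Theorem~\ref{thm:RamseyCompleteSkewSubtrees} plays no role in the proof of Theorem~\ref{tree_HJ}; it enters only later, in Corollary~\ref{tree_HJ_cor}.
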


\subsection{Proof of Theorem \ref{tree_HJ}} We have the following lemma.
\begin{lem}
  \label{branch_sensitive}
  Let $d_0,d_1$ be non negative integers with $d_0+d_1\meg1$ and let $b,\ell,k,r$ be positive integers. Then there exists a positive integer $n_0$ with the following property. For every finite alphabet $\Lambda$ of cardinality $\ell$, every integer $n$ with $n\meg n_0$, every partition $\mathbf{D}=(D_0,D_1,D_2)$ of $[d_0+d_1]$ with $D_2=\emptyset$ and $|D_i|=d_i$ for all $i=0,1$
  and every $r$-coloring $c$ of $\w^\mathbf{D}(b,n,\Lambda)$ there exists $((f_i)_{i=1}^d,(X_i)_{i\in D_1})$ in $\w_v^\mathbf{D}(b,n,\Lambda)$ generating a $k$-dimensional combinatorial subspace such that for every $((w_i)_{i\in [d]}, (x_i)_{i\in D_1}, \emptyset)$ and $((w'_i)_{i\in [d]}, (x'_i)_{i\in D_1}, \emptyset )$ in $[((f_i)_{i=1}^d,(X_i)_{i\in D_1})]_\Lambda$ with $x_i=x'_i$ and $w_i(t)=w'_i(t)$ for all $i$ in $D_1$ and $t$ in $\pred_{b^{\mik n}}(x_i)$, we have that
\[c(((w_i)_{i\in [d]}, (x_i)_{i\in D_1}, \emptyset ))=
c(((w'_i)_{i\in [d]}, (x'_i)_{i\in D_1}, \emptyset )).\]
We denote the least such $n_0$ by $h_1(d_0,d_1,b,\ell,k,r)$.
\end{lem}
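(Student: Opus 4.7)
The idea is to deduce the lemma from Corollary \ref{Hales_Jewett_cor_v2} by linearising the tree $b^{<n}$ via the order $\preccurlyeq$ and identifying the branch-sensitivity demanded by the lemma with the insensitivity property supplied by the corollary.

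I begin by fixing a ``skeleton'' consisting of a vector $k$-complete skew subtree $(T_i)_{i\in[d]}$ of $\mathbf{T}_{b,n}^d$ and, for each $i\in D_1$, a leaf set $X_i\subseteq b^n$ such that $T_i\cup X_i$ is a skew subtree of $b^{\mik n}$. For each $i$ I enumerate $T_i$ in $\preccurlyeq$-order as $t^i_0\prec\dots\prec t^i_{M-1}$ and assign to each $t^i_j$ a region $R^i_j\subseteq b^{<n}$ with $\sqsubseteq$-minimum $t^i_j$ and contained in $\suc(t^i_j)$. The $R^i_j$ are designed so that, read through the $\preccurlyeq$-linearisation of $b^{<n}$, they form consecutive intervals; they will play the role of the wildcard sets $f_i^{-1}(v_{t^i_j})$ of the eventual variable word.

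The crucial geometric property, ensured by the inclusion $R^i_j\subseteq\suc(t^i_j)$, is that for each $i\in D_1$ and each $x\in X_i$, the branch $\pred_{b^{\mik n}}(x)$ meets $R^i_j$ precisely when $t^i_j$ lies on the $T_i$-chain to $x$. Thus requiring $w_i,w_i'$ to agree on $\pred_{b^{\mik n}}(x_i)$ becomes equivalent to requiring their variable assignments to agree at the variables along the $T_i$-chain to $x_i$. The $k$-completeness of $T_i$ implies that each level $j\in\{0,\dots,k-1\}$ of $T_i$ contributes a distinct set $G^i_j\subseteq\{0,\dots,M-1\}$ of possible $\preccurlyeq$-indices for the $j$-th step of a branch, and these level-sets are pairwise disjoint. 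I now apply Corollary \ref{Hales_Jewett_cor_v2} (or, if $d_1=0$, Theorem \ref{Hales_Jewett_mult_simple}) to the product coloring on $\Lambda^{dN}$ with $N=|b^{<n}|$, taking $m=dM$, $\kappa=|D_1|\cdot k$, intervals read off the linearisation, and distinguished subsets the level-branches $G^i_j$. The corollary supplies a variable word whose colour depends only on the values at the branch-associated variables; translating back to the tree picture, this gives the family $(f_i)_{i\in[d]}$ with leaf sets $(X_i)_{i\in D_1}$ asserted by the lemma. Theorem \ref{thm:RamseyCompleteSkewSubtrees} is invoked at the outset to align the $d$ candidate skeletons into a genuine vector $k$-complete skew subtree of $\mathbf{T}_{b,n}^d$.

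The principal obstacle is constructing the regions $R^i_j$ to satisfy the dual constraint of (a) forming consecutive intervals in the $\preccurlyeq$-linearisation (as required by the input format of Corollary \ref{Hales_Jewett_cor_v2}) while (b) each being $\sqsubseteq$-convex with $\sqsubseteq$-minimum $t^i_j$ (as required for a valid tree-indexed variable word). This is achievable by placing the skeleton nodes on carefully spaced length-levels of $b^{<n}$ and choosing each $R^i_j$ as a ``vertical strip'' of the ambient tree between successive length-levels; doing so requires $n$ large enough that each strip has at least $q=\mathrm{Q}(\ell,|D_1|\cdot k,dM,r)$ nodes. Once this geometric setup is in place, the conclusion of the corollary unpacks directly into the branch-sensitivity of the colour required by the lemma.
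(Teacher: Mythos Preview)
Your approach differs substantially from the paper's. The paper proves the lemma by \emph{induction on $k$}: the base case $k=1$ is handled with the ordinary Hales--Jewett theorem, and the inductive step applies Corollary~\ref{Hales_Jewett_cor_v2} with $\kappa=d_1$ (not $d_1k$) and $m=db^M$ to build \emph{one} additional level of the skew subtree above the first $M$ levels, then invokes the inductive hypothesis on those first $M$ levels. A key feature is that the paper does \emph{not} fix the skeleton in advance: the chains $C_{i,p}$ are placed strictly below $b^{<M}$, and the new skew-subtree nodes $t^{i,p}$ are taken a posteriori to be the chain elements indexed by the $\ell^{w'}_y$ returned by the corollary. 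The inductive structure is what guarantees these new nodes sit $\sqsubseteq$-below their intended parents.

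Your direct (one-shot) approach has a genuine gap at precisely the point you flag as the ``principal obstacle''. After applying Corollary~\ref{Hales_Jewett_cor_v2}, the wildcard set associated to the $(i,j)$-th variable is an \emph{arbitrary} subset $W^i_j$ of your region $R^i_j$. For the resulting $f_i$ to be a valid tree-variable word, the $\sqsubseteq$-minima of the $W^i_j$ must form a $k$-complete skew subtree of $b^{<n}$; in particular, if $t^i_j$ has immediate successors $t^i_{j_0},\dots,t^i_{j_{b-1}}$ in your skeleton, then $\min_\sqsubseteq W^i_j$ must lie $\sqsubseteq$-below \emph{every} $\min_\sqsubseteq W^i_{j_l}$. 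If $R^i_j$ is a full ``vertical strip'' $\{s:t^i_j\sqsubseteq s,\ |s|<L\}$, then $W^i_j$ need not even have a $\sqsubseteq$-minimum; if instead $R^i_j$ is a single $\sqsubseteq$-chain descending from $t^i_j$, then $\min_\sqsubseteq W^i_j$ lies on exactly one branch below $t^i_j$ and therefore cannot be a $\sqsubseteq$-predecessor of more than one of the $t^i_{j_l}$. Either way the output does not assemble into a $k$-complete skew subtree. (Note also that a vertical strip is generally \emph{not} a $\preccurlyeq$-interval in $b^{<n}$, since $\preccurlyeq$ groups all nodes of each given length consecutively while the strip contains only a proper subset of them.) The paper's induction on $k$ is exactly what circumvents this: only one new level is added per step, the chains feeding the corollary sit strictly below the already-built part, and each chain lies under a single designated parent, so the node selected by the corollary is automatically a $\sqsubseteq$-successor of that parent.
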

\begin{proof}
  We will prove it by induction on $k$.
  For $k=1$, we work as follows. Let us fix $d_0,d_1,b,\ell,r$ as in the statement of the lemma. If $d_0=0$ then the result is trivial
  with
  \begin{equation}
    \label{eq10}
    h_1(0,d_1,b,\ell,1,r)=1.
  \end{equation}
  Assume that $d_0>0$. We will show that
  \begin{equation}
    \label{eq06}
    h_1(d_0,d_1,b,\ell,1,r)\mik d_0\mathrm{HJ}(\ell^{d_0}, r^{(b\ell)^{d_1}})-d_0+1.
  \end{equation}
  Indeed, pick any integer $n$ with $n\meg d_0\mathrm{HJ}(\ell^{d_0},r^{(b\ell)^{d_1}})-d_0+1$. Let $\Lambda,c$ and $\mathbf{D}=(D_0,D_1,D_2)$ be as in the statement of the lemma. Fix $\alpha$ in $\Lambda$.
  We set $d=d_0+d_1$ and $M=\mathrm{HJ}(\ell^{d_0},r^{(b\ell)^{d_1}})$. Pick for each $i=1,...,d$ a chain $C_i$ of $b^{<n}$ satisfying the following.
  \begin{enumerate}
    \item[(i)] For every $i$ in $D_0$ we have that $C_i$ is of cardinality $M$ and for every $i$ in $D_1$ we have that $C_i$ is a singleton.
    \item[(ii)] For every $i$ in $[d-1]$, we have that $\mathrm{h}_{b^{<n}}(\max_\sqsubseteq C_i)\mik\mathrm{h}_{b^{<n}}(\min_\sqsubseteq C_{i+1})$.
  \end{enumerate}
  Let as write $C_i=\{t_i\}$ for all $i$ in $D_1$ and $C_i=\{t^i_0\sqsubset...\sqsubset t^i_{M-1}\}$ for all $i$ in $D_0$. Moreover, for every $i$ in $D_1$ we pick $x^i_0,...,x^i_{b-1}$ in $b^n$ such that $t_i^\con(j)\sqsubseteq x^i_j$ for each $j=0,...,b-1$. Define $\Lambda'=\Lambda^{D_0}$. For each $\mathbf{a}=\big((a_q^i)_{i\in D_0}\big)_{q=0}^{M-1}$ in $\Lambda'^M$ and $\mathbf{e}=(e_i)_{i\in D_1}$ in $\Lambda^{D_1}$ we define $\mathbf{w}_{\mathbf{a},\mathbf{e}}=(w^{\mathbf{a},\mathbf{e}}_i)_{i\in[d]}$ in $\w^d(b,n,\Lambda)$ as follows.
  \begin{enumerate}
    \item[(a)] For each $i$ in $[d]$ and $t$ in $b^{<n}\setminus C_i$, we set $w^{\mathbf{a},\mathbf{e}}_i(t)=\alpha$.
    \item[(b)] For each $i$ in $D_0$ and $q$ in $\{0,...,M-1\}$, we set $w^{\mathbf{a},\mathbf{e}}_i(t_q^i)=a^i_q$.
    \item[(c)] For each $i$ in $D_1$, we set $w^{\mathbf{a},\mathbf{e}}_i(t_i)=e_i$.
  \end{enumerate}
  Finally, we define an $r^{(b\ell)^{d_1}}$-coloring $c'$ of $\Lambda'^M$ setting for each $\mathbf{a}$ in $\Lambda'^M$
  \[c'(\mathbf{a})=\big(c((w_i^{\mathbf{a},\mathbf{e}})_{i\in [d] },(x^i_{j_i})_{i\in D_1},\emptyset)\big)_{(\mathbf{e},(j_i)_{i\in D_1})\in \Lambda^{D_1}\times\{0,...,b-1\}^{D_1}}.\]
  By the choice of $M$, applying Hales--Jewett Theorem (that is, Theorem \ref{Hales_Jewett}), we have that there exists a variable word $w(v)$ generating a $c'$ monochromatic combinatorial line. Write $w(v)=(\beta_q)_{q=0}^{M-1}$. Let $Y=\mathrm{supp}_w(v)$ and $Z=\{0,...,M-1\}\setminus Y$. Moreover, let $q^*$ be the minimum of $Y$ and for each $q$ in $Z$ write $\beta_q=(a^i_q)_{i\in D_0}$. We define $(f_i)_{i\in[d]}$ in $\w^d_v(b,n,\Lambda)$ as follows.
  \begin{enumerate}
    \item[(1)] For each $i$ in $[d]$ and $t$ in $b^{<n}\setminus C_i$, we set $f_i(t)=\alpha$.
    \item[(2)] For each $i$ in $D_0$ and $q$ in $Y$, we set $f_i(t_q^i)=v_{t^i_{q^*}}$.
    \item[(3)] For each $i$ in $D_0$ and $q$ in $Z$, we set $f_i(t_q^i)=a^i_q$.
    \item[(4)] For each $i$ in $D_1$, we set $f_i(t_i)=v_{t_i}$.
  \end{enumerate}
  It is easy to check that $\big((f_i)_{i\in[d]}, (\{x^i_j:j=0,...,b-1\})_{i\in D_1}\big)$ is as desired and the
  proof of the base case ``$k=1$'' is complete.

  Assume that the statement holds for some $k$. We will prove it for $k+1$. Again we fix $d_0,d_1,b,\ell,r$ as in the statement. Set, as before, $d=d_0+d_1$. We show that
  \begin{equation}
    \label{eq07}
    h_1(d_0,d_1,b,\ell,k+1,r)\mik M+db^M Q-db^M+1,
  \end{equation}
  where $M=h_1(d_0,d_1,b,\ell,k,r^{(\ell b)^{d_1}})$ and
  $Q=\mathrm{Q}(\ell,d_1,db^M,r^{\ell^{d\frac{1-b^M}{1-b}}b^{d_1}})$
  %in the previous version it was:  $Q=\mathrm{Q}(\ell,d_1,b^M,db^M,r^{\ell^{d\frac{1-b^M}{1-b}}b^{d_1}})$
  (recall that $\mathrm{Q}(\cdot)$ is defined in the statement of Corollary \ref{Hales_Jewett_cor_v2}). Indeed, pick any integer $n$ with $n\meg M+db^MQ-db^M+1$. Let $\Lambda,c$ and $\mathbf{D}=(D_0,D_1,D_2)$ be as in the statement.
  %Without loss of generality, we may assume that $c:\w^\mathbf{D}(b,n,\Lambda)\to[r]$.
  Also let $(s_p)_{p=0}^{b^M-1}$ be the nodes in $b^M$ enumerated in $\lex$-increasing order and fix $\alpha$ in $\Lambda$. We set
  \[\mathcal{A}=[d]\times\{0,...,b^M-1\}\times\{0,...,Q-1\}\]
  and we
  define a bijection $\iota$ from $\mathcal{A}$ to $\{0,....,db^MQ-1\}$ by the rule
  \[\iota(i,p,q)=(i-1)b^MQ+p Q+q\]
  for all $(i,p,q)$ in $\mathcal{A}$.
  For every $i$ in $[d]$ and $p$ in $\{0,...,b^M-1\}$ we pick a chain $C_{i,p}$ in $b^{<n}$ of cardinality $Q$ with the following properties.
  \begin{enumerate}
    \item[(i)] For every $i$ in $[d]$ and $p$ in $\{0,...,b^M-1\}$, we have that $s_p\sqsubseteq \min_{\sqsubseteq}C_{i,p}$.
    \item[(ii)] For every $i$ in $[d]$ and $p$ in $\{0,...,b^M-2\}$, we have that
    $\mathrm{h}_{b^{<n}}(\max_\sqsubseteq C_{i,p})\mik
    \mathrm{h}_{b^{<n}}(\min_\sqsubseteq C_{i,p+1})$.
    \item[(iii)] For every $i$ in $[d-1]$, we have that $\mathrm{h}_{b^{<n}}(\max_\sqsubseteq C_{i,b^M-1})\mik
    \mathrm{h}_{b^{<n}}(\min_\sqsubseteq C_{i+1,0})$.
  \end{enumerate}
  Let us write $C_{i,p}=\{t^{i,p}_0\sqsubset...\sqsubset t^{i,p}_{Q-1}\}$ for all
  $i$ in $[d]$ and $p$ in $\{0,...,b^M-1\}$. Moreover, for every $i$ in $D_1$, $p$ in $\{0,...,b^M-1\}$ and $q$ in $\{0,...,Q-1\}$, we pick $x^{i,p}_{q,0},...,x^{i,p}_{q,b-1}$ nodes in $b^n$ such that $t^{i,p}_q\;^\con(j)\sqsubseteq x^{i,p}_{q,j}$ for all $j=0,...,b-1$.
  For every $\mathbf{a}=(a_z)_{z=0}^{db^MQ-1}$ in $\Lambda^{db^MQ}$ we define maps $w_1^\mathbf{a},...,w_d^\mathbf{a}$ from $b^{<n}\setminus b^{<M}$ to $\Lambda$ as follows.
  \begin{enumerate}
    \item[(i)] For each $i$ in $[d]$ and $t$ in $b^{<n}\setminus(b^{<M}\cup\bigcup_{p=0}^{b^M-1}(C_{i,p}))$, we set $w^\mathbf{a}_i(t)=\alpha$.
    \item[(ii)] For each $(i,p,q)$ in $\mathcal{A}$ we set $w_i^\mathbf{a}(t^{i,p}_q)=a_{\iota(i,p,q)}$.
  %   For each $i$ in $[d]$, $p$ in $\{0,...,b^M-1\}$ and $q$ in $\{0,...,Q-1\}$, we set $w_i^\mathbf{a}(t^{i,p}_q)=a_{\iota(i,p,q)}$.
  \end{enumerate}
  We define $I_y=\{yQ,...,(y+1)Q-1\}$ for all $y$ in $\{0,...,db^M-1\}$, while for each $i$ in $[d]$ we define $G_i=\{(i-1)b^M,...,ib^M-1\}$.
  Observe that
  \begin{equation}
    \label{eq08}
    I_{(i-1)b^M+p}=\{\iota(i,p,q):\;q=0,...,Q-1\}
    %\iota(i,p,q)\in I_{(i-1)b^M+p}\subseteq\bigcup_{y\in G_i}I_y
  \end{equation}
  for every choice of $i$ in $[d]$ and $p$ in $\{0,...,b^M-1\}$
  and
  \begin{equation}
    \label{eq11}
    \bigcup_{y\in G_i}I_y=\{\iota(i,p,q):\;q=0,...,Q-1\;\text{and}\;p=0,...,b^M-1\}
  \end{equation}
  for every choice of $i$ in $[d]$.
  Also set
  \[\mathcal{X}=\w^d(b,M,\Lambda)\times\{(j_i)_{i\in D_1}:\;j_i\in\{0,...,b-1\}\;\text{for all}\;i\in D_1\}.\]
  We define an $r^{|\mathcal{X}|}$-coloring $c'$ of $\Lambda^{db^MQ}\times\prod_{i\in D_1}\bigcup_{y\in G_i}I_y$ setting
  for each $(\mathbf{a},\mathbf{z})=((\alpha_z)_{z=0}^{db^MQ-1},(z_i)_{i\in D_1})$ in $\Lambda^{db^MQ}\times\prod_{i\in D_1}\bigcup_{y\in G_i}I_y$
  \[c'((\mathbf{a},\mathbf{z}))=
  (c((w_i\cup w_i^\mathbf{a})_{i\in[d]},(x^{i,p_i}_{q_i,j_i})_{i\in D_1},\emptyset))_{((w_i)_{i\in[d]},(j_i)_{i\in D_1})\in\mathcal{X}},\]
  where $p_i$ and $q_i$ are the unique elements in $\{0,...,b^M-1\}$
   and $\{0,...,Q-1\}$ respectively such that $z_i=\iota(i,p_i,q_i)$, which exist due to \eqref{eq11}, for all $i$ in $D_1$.
  Since $\mathcal{X}$ is of cardinality $\ell^{d\frac{1-b^M}{1-b}}b^{d_1}$, by the definition of $Q$ and applying Corollary \ref{Hales_Jewett_cor_v2} for ``$k=\ell$'', ``$\kappa=d_1$'', ``$\lambda=b^M$'', ``$m=db^M$'' and ``$r=r^{\ell^{d\frac{1-b^M}{1-b}}b^{d_1}}$'', we obtain an $db^M$-dimensional variable word $w'(v_0,...,v_{db^M-1})$ over $\Lambda$ satisfying the following.
  \begin{enumerate}
    \item[(a)] The wildcard set of $v_y$ in $w'$ is contained in $I_{y}$ for all $y=0,...,db^M-1$.
    \item[(b)] For every $a_0,...,a_{db^M-1},a'_0,...,a'_{db^M-1}$ in $\Lambda$ and $(y_i)_{i\in D_1}$ in $\prod_{i\in D_1}G_i$ with $a_{y_i}=a'_{y_i}$ for all $i$ in $D_1$, we have that
        \[c'((w'(a_0,...,a_{db^M-1}),(\ell^{w'}_{y_i})_{i\in D_1}))=c'((w'(a'_0,...,a'_{db^M-1}),(\ell^{w'}_{y_i})_{i\in D_1})).\]
  \end{enumerate}
  Observe that due to property (a) and equation \eqref{eq08} above, we have that for every $i$ in $[d]$ and $p$ in $\{0,...,b^M-1\}$ there exists unique $q_{i,p}$ in $\{0,...,Q-1\}$ such that $\iota(i,p,q_{i,p})=\ell^{w'}_{y_{i,p}}$, where $y_{i,p}=(i-1)b^M+p$. Moreover, the map sending each $(i,p)$ to $y_{i,p}$ is a bijection between $[d]\times\{0,...,b^M-1\}$ and $\{0,...,db^M-1\}$. For every $i$ in $[d]$ and $p$ in $\{0,...,b^M-1\}$, we set $t^{i,p}=t^{i,p}_{q_{i,p}}$ and $x^{i,p}_{j}=x^{i,p}_{q_{i,p},j}$ for each $j$ in $\{0,...,b-1\}$.
  Write $w'((v_0,...,v_{db^M}-1))=(\beta_0,...,\beta_{db^MQ-1})$. We define $((f'_i)_{i\in[d]}, (X'_i)_{i\in D_1})$ in $\w^\mathbf{D}_v(b,n,\Lambda)$ as follows.
  \begin{enumerate}
    \item[(i)] For every $i$ in $[d]$ and $t$ in $b^{<M}$, we set $f_i'(t)=v_t$.
    \item[(ii)] For every $i$ in $[d]$ and $t$ in $b^{<n}\setminus(b^{<M}\cup\bigcup_{p=0}^{b^M-1}(C_{i,p}))$, we set $f_i'(t)=\alpha$.
    \item[(iii)] For every $i$ in $[d]$, $p$ in $\{0,...,b^M-1\}$ and $q$ in $\{0,...,Q-1\}$ such that $\beta_{\iota(i,p,q)}$ belongs to $\Lambda$, we set $f'_i(t^{i,p}_q)=\beta_{\iota(i,p,q)}$.
    \item[(iv)] For every $i$ in $[d]$, $p$ in $\{0,...,b^M-1\}$, $q$ in $\{0,...,Q-1\}$ and $y$ in $\{0,...,db^M-1\}$ such that $b_{\iota(i,p,q)}=v_y$, we set $f'_i(t^{i,p}_q)=v_{t^{i,p}}$.
    \item[(v)] For every $i$ in $D_1$, we set $X'_i$ to be the set of all elements $x^{i,p}_{j}$, where $p$ belongs to $\{0,...,b^M-1\}$ and $j$ belongs to $\{0,...,b-1\}$.
  \end{enumerate}
  It is easy to observe that $((f' _i)_{i\in [d]},(X'_i)_{i\in D_1})$  indeed belongs to $\w^\mathbf{D}_v(b,n,\Lambda)$.
  Let $\iota^*$ be the bijection between $b^M$ and $\{0,...,b^M-1\}$ sending each $s_p$ to $p$ and
  set \[\mathcal{Y}=\w^\mathbf{D}(b,M,\Lambda)\times\Lambda^{D_1}\times\{0,...,b-1\}^{D_1}.\]
  Observe that by the choice of $((f' _i)_{i\in [d]},(X'_i)_{i\in D_1})$, we have that for every
  $\mathbf{y}=(((w'_i)_{i\in [d]},(s'_i)_{i\in D_1},\emptyset),(a_i)_{i\in D_1},(j_i)_{i\in D_1})$ in $\mathcal{Y}$ there is some color $c_\mathbf{y}$ such that
  \[c(((w_i)_{i\in [d]},(x_i)_{i\in D_1}, \emptyset))=c_\mathbf{y}\]
  for all $((w_i)_{i\in [d]},(x_i)_{i\in D_1}, \emptyset)$ in $[((f' _i)_{i\in [d]},(X'_i)_{i\in D_1})]_\Lambda$ satisfying
  \begin{enumerate}
    \item[(i)] $w_i(t)=w'_i(t)$ for all $i$ in $[d]$ and $t$ in $b^{<M}$ and
    \item[(ii)] $w_i(t^{i,\iota^*({s'_i})})=a_i$ and $x_i=x^{i,\iota^*(s'_i)}_{j_i}$ for all $i$ in $D_1$.
  \end{enumerate}
  Set $\mathcal{Z}=\Lambda^{D_1}\times\{0,...,b-1\}^{D_1}$ and consider the $r^{|\mathcal{Z}|}$-coloring $c''$ of $\w^\mathbf{D}(b,M,\Lambda)$ defined by the rule
  \[c''(\mathbf{w}')=(c_{(\mathbf{w}',\mathbf{a},\mathbf{j})})_{(\mathbf{a},\mathbf{j})\in\mathcal{Z}},\]
  for all $\mathbf{w}'$ in $\w^\mathbf{D}(b,M,\Lambda)$. Since $\mathcal{Z}$ is of cardinality $(\ell b)^{d_1}$, by the choice of $M$, applying the inductive assumption, we obtain $((f''_i)_{i\in [d]},(S_i)_{i\in D_1})$ in $\w^\mathbf{D}_v(b,M,\Lambda)$
  generating a combinatorial subspace of dimension $k$ and having the property that for every $((w'_i)_{i\in [d]},(s'_i)_{i\in D_1},\emptyset)$ and $((\tilde w'_i)_{i\in [d]},(\tilde s'_i)_{i\in D_1},\emptyset)$ in $[((f''_i)_{i\in [d]},(S_i)_{i\in D_1})]_\Lambda$
  with $s'_i=\tilde s'_i$ and $w'_i(t)=\tilde w'_i(t)$ for all $i$ in $D_1$ and $t$ in $\pred_{b^{\mik M}}(s'_i)$,
  we have that
  \[c''(((w'_i)_{i\in [d]},(s'_i)_{i\in D_1},\emptyset))=c''(((\tilde w'_i)_{i\in [d]},(\tilde s'_i)_{i\in D_1},\emptyset)).\]
  Finally, we pick $((f_i)_{i\in [d]},(X_i)_{i\in D_1})$ in $\w^\mathbf{D}_v(b,n,\Lambda)$ satisfying the following.
  \begin{enumerate}
    \item[(i)] The combinatorial subspace generated by $((f_i)_{i\in [d]},(X_i)_{i\in D_1})$ is of dimension $k+1$.
    \item[(ii)] $[((f_i)_{i\in [d]},(X_i)_{i\in D_1})]_\Lambda\subseteq[((f'_i)_{i\in [d]},(X'_i)_{i\in D_1})]_\Lambda$.
    \item[(iii)] For every $i$ in $[d]$ and $t$ in $b^{<M}$, we have that $f_i(t)=f''_i(t).$
    \item[(iv)] For every $i$ in $D_1$ and $x$ in $X_i$ there exists $s$ in $S_i$ such that $s\sqsubseteq x$.
  \end{enumerate}
  It follows readily that $((f_i)_{i\in [d]},(X_i)_{i\in D_1})$ is as desired.
\end{proof}

\begin{prop}
  \label{prop_d_2=0}
  For every choice of non negative integers  $d_0,d_1$ with $d_0+d_1\meg1$ and every choice of positive integers $b,\ell,k,r$, we have that
  $\mathcal{P}(d_0,d_1,0,b,\ell,k,r)$ holds.
\end{prop}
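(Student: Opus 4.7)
The starting observation is that when $d_2=0$, the condition defining a $c$-good subspace --- equality of color whenever $t_i=t'_i$ and $w_i(t_i)=w'_i(t'_i)$ hold for all $i\in D_2$ --- is vacuously satisfied for any two elements of the subspace, since the universal quantifier ranges over $D_2=\emptyset$. Thus ``$c$-good'' coincides with ``monochromatic for $c$'', and the proposition reduces to producing a monochromatic $k$-dimensional combinatorial subspace.

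The plan is to argue by induction on $d_1$, using Lemma \ref{branch_sensitive} as the main input. The base case $d_1=0$ is immediate: the hypothesis in the conclusion of Lemma \ref{branch_sensitive} quantifies over $i\in D_1=\emptyset$ and is therefore vacuous, so every two elements of the $k$-dimensional subspace it produces have the same color. For the inductive step, given an $r$-coloring $c$ of $\w^{\mathbf{D}}(b,n,\Lambda)$ with $|D_1|=d_1+1$, I would first apply Lemma \ref{branch_sensitive} with an enlarged target dimension $k^\ast>k$ to obtain $((f_i),(X_i))$ generating a $k^\ast$-dimensional subspace on which $c$ depends only on the branch data $(x_i,w_i\upharpoonright\pred_{b^{\mik n}}(x_i))_{i\in D_1}$. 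Crucially, for each $i\in D_1$ and each $x\in X_i$ the chain $\ws(f_i)\cap\pred_{b^{\mik n}}(x)$ has exactly $k^\ast$ elements, so this branch data takes only finitely many values, bounded in terms of $k^\ast,b,\ell$ and independent of $n$.

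Singling out a coordinate $i^\ast\in D_1$, I would then define a secondary, finitely-valued coloring on the $\mathbf{D}'$-slice of the Lemma's subspace, where $\mathbf{D}'=(D_0\cup\{i^\ast\},D_1\setminus\{i^\ast\},\emptyset)$, recording the full dependence of $c$ on the $i^\ast$-th branch data. Applying the inductive hypothesis to this secondary coloring yields a $k$-dimensional sub-subspace on which $c$ depends only on the $i^\ast$-th branch data. The final step is to eliminate this remaining dependence, obtaining a $k$-dimensional monochromatic sub-subspace of $((f_{i^\ast}),(X_{i^\ast}))$: I would combine Theorem \ref{thm:RamseyCompleteSkewSubtrees}, applied to the skew tree $\ws(f_{i^\ast})\cup X_{i^\ast}$ to absorb the choice of leaf $x_{i^\ast}\in X_{i^\ast}$, with Theorem \ref{Hales_Jewett*}, applied to absorb the chain instantiation in $\Lambda^{k^\ast}$.

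The hard part will be this final refinement. The branch data couples the discrete choice of leaf (equivalently, a maximal chain in the skew tree $\ws(f_{i^\ast})\cup X_{i^\ast}$) with the choice of letter values along that chain, and the two aspects cannot be separated cleanly because distinct chains pass through distinct wildcards of $f_{i^\ast}$. The tree-Ramsey and Hales--Jewett arguments must therefore be carefully synchronized, and $k^\ast$ must be chosen sufficiently large in the initial application of Lemma \ref{branch_sensitive} so that after the successive refinements at least $k$ dimensions remain.
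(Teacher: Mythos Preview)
Your opening observation is correct: when $d_2=0$, ``$c$-good'' is exactly ``monochromatic'', and the task is to find a monochromatic $k$-dimensional subspace. Your plan to start from Lemma~\ref{branch_sensitive} is also the right move. But the rest of your strategy --- induction on $d_1$ and a separate tree-Ramsey/Hales--Jewett attack on the final coordinate --- misses a key simplification, and as a result the part you yourself flag as hard is left unresolved.

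The paper's proof avoids induction on $d_1$ altogether. After Lemma~\ref{branch_sensitive} produces an $M$-dimensional subspace on which the color depends only on the branch data $(x_i, w_i\upharpoonright\pred(x_i))_{i\in D_1}$, the crucial observation is that this data can be encoded as a \emph{single word of length $M$} over the product alphabet $(\Lambda\times\{0,\dots,b-1\})^{D_1}$: at each level $q<M$ record, for every $i\in D_1$, the letter $w_i$ assigns at the unique node of $\ws(f_i)(q)$ on the chain to $x_i$, together with the direction the chain takes at that node. The paper packages this as an operator $\mathcal{Q}$ onto $((\Lambda\times\{0,\dots,b-1\})^{D_1})^{M}$, and then a single application of the multidimensional Hales--Jewett theorem (Theorem~\ref{Hales_Jewett_mult_simple}) to this alphabet, with $M=\mathrm{MHJ}((\ell b)^{d_1},k,r)$, gives a monochromatic $k$-dimensional subspace which lifts back to the desired object.

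This encoding is exactly what dissolves your ``hard part'': the coupling between the leaf choice and the chain instantiation that you worry cannot be separated is not separated at all --- it is merged into a single letter of an enlarged alphabet, one letter per level. Once you see this, your induction on $d_1$ becomes superfluous (all coordinates in $D_1$ are handled simultaneously by enlarging the alphabet), and there is no need for Theorem~\ref{thm:RamseyCompleteSkewSubtrees} or Theorem~\ref{Hales_Jewett*} here. Your proposal as written does not contain this idea, and without it the synchronization you describe between tree-Ramsey and Hales--Jewett arguments remains a genuine gap rather than a routine detail.
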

\begin{proof}
  Let $d_0,d_1,b,\ell,k,r$  be as in the statement. In particular, we will show that
  \[\mathrm{MTHJ}(d_0,d_1,0,b,\ell,k,r)\mik
  h_1(d_0,d_1,b,\ell,\mathrm{MHJ}((\ell b)^{d_1},k,r),r).\]
  Indeed, let
  $M=\mathrm{MHJ}((\ell b)^{d_1},k,r)$ and $n$ an integer with $n\meg
  h_1(d_0,d_1,b,\ell,M,r)$.
  Also, let $\Lambda$ be an alphabet with $\ell$ letters. Set $d=d_0+d_1$ and let $\mathbf{D}=(D_0,D_1,D_2)$ be
  a partition of $[d]$ with $D_2=\emptyset$ and  $|D_i|=d_i$ for each $i=0,1$. Finally, let
  $c$ be an $r$-coloring of $\w^\mathbf{D}(b,n,\Lambda)$.
  %We need to find an element of $\w^\mathbf{D}_v(b,n,\Lambda)$ generating a monochromatic $k$-dimensional combinatorial subspace.
  By Lemma \ref{branch_sensitive}, we obtain $((f'_i)_{i\in[d]}  , (X'_i)_{i\in D_1})$ in $\w^\mathbf{D}_v(b,n,\Lambda)$ generating an $M$-dimensional combinatorial subspace such that
  for every $((w_i)_{i\in [d]}, (x_i)_{i\in D_1},
  \emptyset)$ and $((w'_i)_{i\in [d]}, (x'_i)_{i\in D_1}, \emptyset )$
  in $[((f'_i)_{i\in[d]},(X'_i)_{i\in D_1})]_\Lambda$ with $x_i=x'_i$
  and $w_i(t)=w'_i(t)$ for all $i$ in $D_1$ and $t$ in $\pred_{b^{\mik n}}(x_i)$, we have that
  \begin{equation}
    \label{eq09}
    c(((w_i)_{i\in [d]}, (x_i)_{i\in D_1}, \emptyset ))=
c(((w'_i)_{i\in [d]}, (x'_i)_{i\in D_1}, \emptyset )).
  \end{equation}
  For every $i$ in $D_1$ there exists unique $(M+1)$-complete skew subtree $T_i$ of $b^{\mik n}$ such that $X_i=T_i(M)$. In particular, we have that $\ws(f'_i)=\bigcup_{m=0}^{M-1}T_i(m)$ for each $i$ in $D_1$. For every $i$ in $D_1$ and $x$ in $X_i$ we set $y_i^x=\mathrm{I}^{-1}_{T_i}(x)$.
   We define an operator $\mathcal{Q}$ from $[((f'_i)_{i=1}^d,(X'_i)_{i\in D_1})]_\Lambda$ to $((\Lambda\times\{0,...,b-1\})^{D_1})^{M}$ by the rule
  \[\mathcal{Q}(((w_i)_{i\in [d]},(x_i)_{i\in D_1},\emptyset))=(((w_i(\mathrm{I}_{T_i}(y_i^{x_i}\upharpoonright q)),y_i^{x_i}(q)))_{i\in D_1})_{q=0}^{M-1}.\]
  The operator $\mathcal{Q}$ records the information of the elements in $[((f'_i)_{i=1}^d,(X'_i)_{i\in D_1})]_\Lambda$ that determines their color. In particular, \eqref{eq09} can equivalently be stated as follows. If two elements in $\big[\big((f'_i)_{i=1}^d,(X'_i)_{i\in D_1}\big)\big]_\Lambda$ have the same image throw $\mathcal{Q}$, then they have the same color.
  Moreover, it is easy to observe that $\mathcal{Q}$ is onto. Let us pick for every $\mathbf{a}$ in $((\Lambda\times\{0,...,b-1\})^{D_1})^{M}$ some $\mathbf{w}_\mathbf{a}$ in $\mathcal{Q}^{-1}(\mathbf{a})$. Moreover, we define an $r$-coloring $c'$ on $((\Lambda\times\{0,...,b-1\})^{D_1})^{M}$ by the rule
  \[c'(\mathbf{a})=c(\mathbf{w}_\mathbf{a})\]
  for all $\mathbf{a}$ in $((\Lambda\times\{0,...,b-1\})^{D_1})^{M}$. By the choice of $M$, applying Theorem \ref{Hales_Jewett_mult_simple},
  we obtain a $k$-dimensional variable word $w(v_0,...,v_{k-1})$ that generates a $c'$-monochromatic combinatorial subspace.
  Write $w(v_0,...,v_{k-1})=(\beta_0,...,\beta_{M-1})$ and for every $q$ in $\{0,...,M-1\}$ such that $\beta_q$ belongs to
  $(\Lambda\times\{0,...,b-1\})^{D_1}$ write $\beta_q=(a^i_q,j^i_q)_{i\in D_1}$.
  For each $i$ in $D_1$, we define two $k$-dimensional variable words
  $w^i_\Lambda(v_0,...,v_{k-1})=(\beta^{\Lambda,i}_q)_{q=0}^{M-1}$ and  $w^i_T(v_0,...,v_{k-1})=(\beta^{T,i}_q)_{q=0}^{M-1}$
  over $\Lambda$ and $\{0,...,b-1\}$ respectively of length $M$ such that for every $q$ in $\{0,...,M-1\}$, if $\beta_q$ is
  equal to some $v_p$ then we set $\beta^{\Lambda,i}_q=\beta^{T,i}_q=v_p$, while if $\beta_q$ belongs to
  $(\Lambda\times\{0,...,b-1\})^{D_1}$ then we set $\beta^{\Lambda,i}_q=a^i_q$ and $\beta^{T,i}_q=j^i_q$. For each $i$ in
  $D_1$, we set
  \[\begin{split}
    S_i=&
    \{\mathrm{I}_{T_i}(w^i_T(a_0,...,a_{k-1})\upharpoonright\ell^w_p):a_0,...,a_{k-1}\in\Lambda\;\text{and}\;p=0,...,k-1\}\\
    &\cup
  \{\mathrm{I}_{T_i}(w^i_T(a_0,...,a_{k-1})):a_0,...,a_{k-1}\in\Lambda\}
  \end{split}
  \]
  and $S_i^*=\bigcup_{p=0}^{k-1}S_i(p)$.
  It is immediate that each $S_i$ is a $(k+1)$-complete skew subtree of $T_i$. Moreover, for each $p$ in $\{0,...,k-1\}$ we have that $S_i(p)\subseteq T_i(\ell^w_p)$, while the last level $S_i(k)$ of $S_i$ is contained in the last level $T_i(M)$ of $T_i$ which is equal to $X_i$.
  For every $i$ in $D_0$, we pick a $k$-complete skew subtree $S_i^*$ of $\ws(f'_i)$ such that $S^*_i(p)\subseteq
  \ws(f'_i)(\ell^w_p)$ for all $p=0,...,k-1$. By these choices, we clearly have that $\mathbf{S}^*=(S^*_1,...,S^*_d)$ is a
  vector skew subtree of $(\ws(f'_i))_{i\in[d]}$. Fix some $\alpha$ in $\Lambda$ and for every $i$ in $D_0$, let $f_i$ be
  the unique element of $\w_v(b,n,\Lambda)$ satisfying the following.
  \begin{enumerate}
    \item[(i)] $[f_i]_\Lambda\subseteq[f'_i]_\Lambda$.
    \item[(ii)] $f_i(t)=\alpha$ for all $t$ in $\ws(f'_i)\setminus S^*_i$.
    \item[(iii)] $f_i(t)=v_t$ for all $t$ in $S^*_i$.
  \end{enumerate}
  Clearly $\ws(f_i)=S^*_i$ for all $i$ in $D_0$.

  Next, for every $i$ in $D_1$, we will define an appropriate $f_i$ in $\w_v(b,n,\Lambda)$.
  To this end, we need some preparatory definitions. Fix some $i$ in $D_1$.
  Set
  \[A_i=\bigcup_{s\in S_i(k)}\pred_{T_i}(s)\]
  and observe that
  \[A_i=\{\mathrm{I}_{T_i}(w^i_T(a_0,...,a_{k-1})\upharpoonright q):a_0,...,a_{k-1}\in\Lambda\;\text{and}\;q=0,...,M-1\}.\]
  For every $s$ in $S^*_i$, we set
  \[Y_s=\{t\in A_i:\;\mathrm{h}_{T_i}(t)\in\mathrm{supp}_{w}(v_{\mathrm{h}_{S_i}(s)})
  \;\text{and}\;s\sqsubseteq t\}.\]
  For every $i$ in $D_1$, let $f_i$ be the unique element of $\w_v(b,n,\Lambda)$ satisfying the following.
  \begin{enumerate}
    \item[(i)] $[f_i]_\Lambda\subseteq[f'_i]_\Lambda$.
    \item[(ii)] $f_i(t)=\alpha$ for all $t$ in $\ws(f'_i)\setminus A_i$.
    \item[(iii)] $f_i(t)=v_s$ for all $s$ in $S^*_i$ and $t$ in $Y_s$.
    \item[(iv)]  $f_i(t)=\beta^{\Lambda,i}_{\mathrm{h}_{T_i}(t)}$ for all $t$ in $A_i\setminus\bigcup_{s\in S^*_i}Y_s$.
  \end{enumerate}
  Let us notice that for every $t$ in $A_i\setminus\bigcup_{s\in S^*_i}Y_s$ we have that
  $\mathrm{h}_{T_i}(t)$ does not belong to $\bigcup_{p=0}^{k-1}\mathrm{supp}_w(v_p)$ and therefore $\beta^{\Lambda,i}_{\mathrm{h}_{T_i}(t)}$ is an element of $\Lambda$. Thus, it is easy to verify that $f_i$ indeed belongs to $\w_v(b,n,\Lambda)$ and $\ws(f_i)=S^*_i$ for all $i$ in $D_1$.

  Finally, setting $X_i=S_i(k)$ for all $i$ in $D_1$, we have that $((f_i)_{i\in[d]},(X_i)_{i\in D_1})$ is as desired. Indeed, it follows easily that $((f_i)_{i\in[d]},(X_i)_{i\in D_1})$ belongs to $\w^\mathbf{D}_v(b,n,\Lambda)$ with  $[((f_i)_{i\in[d]},(X_i)_{i\in D_1})]_\Lambda\subseteq [((f'_i)_{i\in[d]},(X'_i)_{i\in D_1})]_\Lambda$.
  Thus, by the definition of the color $c'$ and \eqref{eq09}, we have that
  \[c(\mathbf{w})=c'(\mathcal{Q}(\mathbf{w}))\]
  for all
  $\mathbf{w}$  in $[((f_i)_{i\in[d]},(X_i)_{i\in D_1})]_\Lambda$.
  On the other hand, it is easy to see that by the choice of $((f_i)_{i\in[d]},(X_i)_{i\in D_1})$ we have that the image
  throw $\mathcal{Q}$ of every element in $[((f_i)_{i\in[d]},(X_i)_{i\in D_1})]_\Lambda$ belongs to the combinatorial
  subspace generated by $w$, which is $c'$ monochromatic. Therefore $[((f_i)_{i\in[d]},(X_i)_{i\in D_1})]_\Lambda$ is $c$
  monochromatic as desired.
\end{proof}

\begin{lem}
  \label{tree_HJ_dim_1}
  For every choice of positive integers $d,b,\ell,r,m$, there exists a positive integer $n_0$ with the following property.
  For every $n,\Lambda,D_2, \mathbf{D}_1,...,\mathbf{D}_m$
  and $c_1,...,c_m$ such that
  \begin{enumerate}
    \item[(i)] $n$ is an integer with $n\meg n_0$,
    \item[(ii)] $\Lambda$ in an alphabet with $\ell$ elements,
    \item[(iii)] $D_2$ is a subset of $[d]$,
    \item[(iv)] $\mathbf{D}_p=(D_0^p,D_1^p,D_2^p)$ is a partition of $[d]$ with $D_1^p\cap D_2=\emptyset$ and $D_2^p\subseteq D_2$ for all $p$ in $[m]$ and
    \item[(v)] $c_p$ is an $r$-coloring of $\w^{\mathbf{D}_p}(b,n,\Lambda)$ for all $p$ in $[m]$,
  \end{enumerate}
  there exist $(f_i)_{i\in[d]}$ and $(X_i)_{i\in[d]\setminus D_2}$ such that for every $p=1,...,m$ we have that
  $((f_i)_{i\in[d]},(X_i)_{i\in D^p_1})$ belongs to $\w^{\mathbf{D}_p}_v(b,n,\Lambda)$ and generates an $1$-dimensional
  $c_p$-good combinatorial subspace.
  We denote the least such $n_0$ by $h_2(d,m,b,\ell,r)$.
\end{lem}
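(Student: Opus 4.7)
The plan is to argue by induction on $m$, with the trivial base case $m = 0$ and the inductive step using Proposition \ref{prop_d_2=0} twice. For the inductive step $m \to m + 1$ with colorings $c_1, \ldots, c_{m+1}$, I would first apply Proposition \ref{prop_d_2=0} to the partition $(D_0^{m+1} \cup D_2^{m+1}, D_1^{m+1}, \emptyset)$ (which has $d_2 = 0$) with a trivial coloring, obtaining a $K_1$-dimensional ambient subspace $F^{(1)} = ((f_i^{(1)})_{i\in[d]}, (X_i^{(1)})_{i\in D_1^{m+1}})$. Writing $T_i^{(1)} := \ws(f_i^{(1)})$ and $\mathcal{T} := \prod_{i \in D_2^{m+1}} T_i^{(1)}$, the size $|\mathcal{T}|$ is bounded by a function of $K_1$, $b$, and $d$ only, independently of $n$.

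Next, I would define an auxiliary $r^{|\mathcal{T}|}$-coloring $\tilde c_{m+1}$ on $\w^{(D_0^{m+1} \cup D_2^{m+1}, D_1^{m+1}, \emptyset)}(b, n, \Lambda)$ by
\[
\tilde c_{m+1}\big((w_i)_{i \in [d]}, (x_i)_{i \in D_1^{m+1}}\big) = \big(c_{m+1}((w_i), (x_i), (t_i))\big)_{(t_i) \in \mathcal{T}}.
\]
A second application of Proposition \ref{prop_d_2=0} inside $F^{(1)}$, viewed as $\w(b, K_1, \Lambda)$ via the canonical isomorphisms $\mathrm{I}_{T_i^{(1)}}$, with dimension $K_2 := h_2(d, m, b, \ell, r)$, would produce a $K_2$-dimensional $\tilde c_{m+1}$-monochromatic sub-subspace $F^{(2)} \subseteq F^{(1)}$. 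In particular, for every $(t_i) \in \mathcal{T}$, the coloring $c_{m+1}(\cdot, (t_i))$ is constant on the span of $F^{(2)}$.

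Finally, I would invoke the inductive hypothesis for the $m$ colorings $c_1, \ldots, c_m$ (suitably reinterpreted on $F^{(2)}$ viewed as $\w(b, K_2, \Lambda)$), producing a $1$-dimensional subspace $F = ((f_i)_{i \in [d]}, (X_i)_{i \in [d] \setminus D_2})$ with $F \subseteq F^{(2)}$ such that $((f_i), (X_i)_{i \in D_1^p})$ generates a $c_p$-good subspace for each $p \leq m$. Writing $s_i$ for the unique element of $\ws(f_i)$, the tuple $(s_i)_{i \in D_2^{m+1}}$ lies in $\mathcal{T}$ since $s_i \in \ws(f_i) \subseteq T_i^{(1)}$; the $\tilde c_{m+1}$-monochromaticity of $F^{(2)}$ then forces $c_{m+1}$ to be constant on the span of $F$, making $F$ in particular $c_{m+1}$-good.

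The main obstacle will be the self-referential constraint on $K_1$: the second application of Proposition \ref{prop_d_2=0} requires $K_1 \geq \mathrm{MTHJ}(|D_0^{m+1}| + |D_2^{m+1}|, |D_1^{m+1}|, 0, b, \ell, K_2, r^{|\mathcal{T}|})$, in which $|\mathcal{T}|$ itself depends on $K_1$. Since $\mathrm{MTHJ}$ is primitive recursive, a suitable $K_1$ can be explicitly computed, after which $n_0$ is determined by the first application of Proposition \ref{prop_d_2=0}. A further technicality is the reinterpretation of $c_1, \ldots, c_m$ on $F^{(2)}$, which requires some care regarding coordinates $i \in D_1^p \setminus D_1^{m+1}$, where the $X_i$-data must be supplied afresh rather than inherited from $F^{(2)}$.
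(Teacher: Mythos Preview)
Your identification of the self-referential constraint on $K_1$ is correct, but your proposed resolution fails. The requirement is $K_1 \geq \mathrm{MTHJ}\big(|D_0^{m+1}|+|D_2^{m+1}|,\,|D_1^{m+1}|,\,0,\,b,\,\ell,\,K_2,\,r^{|\mathcal{T}|}\big)$ with $|\mathcal{T}|$ of order $\big(\tfrac{b^{K_1}-1}{b-1}\big)^{|D_2^{m+1}|}$. The function $\mathrm{MTHJ}$ is (much more than) monotone increasing in its number-of-colors argument, and that argument here is doubly exponential in $K_1$; consequently the right-hand side grows far faster than $K_1$ and the inequality $K_1 \geq F(K_1)$ has no solution once $D_2^{m+1}\neq\emptyset$. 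Primitive recursiveness of $\mathrm{MTHJ}$ is irrelevant to this; the circularity is genuine and fatal to the scheme as written.

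The paper's proof breaks exactly this circularity by treating the coordinates in $D_2$ asymmetrically: for $i\in D_2$ one lays down a \emph{chain} $S_i$ of a fixed length $Q$ (not a complete skew subtree of height $M$), so that the set of possible nodes $t_i$ has size $Q$ rather than $\tfrac{b^{M}-1}{b-1}$. The encoding of the $D_2$-data into the auxiliary colorings $c'_p$ then costs only $r^{\ell^{Q d_2} Q^{d_2^p}}$ colors, a quantity depending on $Q$ but not on $M$; this lets one choose $Q$ first (via iterated applications of Corollary~\ref{Hales_Jewett_cor_v2}) and $M$ afterwards (via iterated applications of Proposition~\ref{prop_d_2=0}), with no feedback. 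The essential point you are missing is that on a $1$-dimensional subspace each $\ws(f_i)$ is a single node, so the $D_2$-coordinates only ever need a chain, and handling chains uses the ordinary Hales--Jewett machinery rather than the full tree version.
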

\begin{proof}
  Let us fix $d,b,\ell,r$ and $m$ as in the statement. We need to define some quantities in order to describe explicitly an upper bound for $h_2(d,m,b,\ell,r)$. By inverse recursion we define a sequence $(q^*_p)_{p=1}^m$ of positive integers by the rule:
    \[
        \left\{ \begin{array} {l} q_m^*=\max_{0\mik d_2'\mik d_2\mik d}\mathrm{Q}(\ell,d_2',d_2,r),\\
            q_p^*=\max_{0\mik d_2'\mik d_2\mik d}\mathrm{Q}(\ell,d_2',d_2q^*_{p+1},r),\;p=m-1,...,1,
        \end{array}  \right.
    \]
%    \[
%        \left\{ \begin{array} {l} q_m^*=\max_{0\mik d_2'\mik d_2\mik d}\mathrm{Q}(\ell,d_2',1,d_2,r),\\
%            q_p^*=\max_{0\mik d_2'\mik d_2\mik d}\mathrm{Q}(\ell,d_2',q^*_{p+1},d_2q^*_{p+1},r),\;p=m-1,...,1.
%        \end{array}  \right.
%    \]
where $Q(\cdot)$ is as defined in Corollary \ref{Hales_Jewett_cor_v2}.
We set $Q_*=q^*_1$. Moreover, by inverse recursion we define a sequence $(M^*_p)_{p=1}^m$ of positive integers by the rule:
    \[
\left\{ \begin{array} {l} M_m^*=\max_{d_0+d_1+d_2=d}\mathrm{MTHJ}(d_0,d_1,0,b,\ell,1,r^{\ell^{Q_*d_2}Q_*^{d_2}}),\\
M_p^*=\max_{d_0+d_1+d_2=d}\mathrm{MTHJ}(d_0,d_1,0,b,\ell,M^*_{p+1},r^{\ell^{Q_*d_2}Q_*^{d_2}}),\;p=m-1,...,1.
\end{array}  \right.
\]
Finally, we set $M_*=M^*_1$ and $n_0=dM_*+dQ_*$. We will show that
\[h_2(d,m,b,\ell,r)\mik n_0.\]

Indeed, let $n$ be an integer with $n\meg n_0$. Also, let $\Lambda,D_2, \mathbf{D}_1,...,\mathbf{D}_m$
  and $c_1,...,c_m$ be as in the assumptions of Lemma \ref{tree_HJ_dim_1}. We set $d_2=|D_2|$ and for every $p$ in $[m]$, we set $d_0^p=|D_0^p\setminus D_2|$, $d_1^p=|D_1^p|$ and $d_2^p=|D_2^p|$.
  By inverse recursion we define a sequence $(q_p)_{p=1}^m$ of positive integers by the rule:
    \[
        \left\{ \begin{array} {l} q_m=\mathrm{Q}(\ell,d_2^m,d_2,r),\\
        q_p=\mathrm{Q}(\ell,d_2^p,d_2q_{p+1},r),\;p=m-1,...,1.
        \end{array}  \right.
    \]
%    \[
%        \left\{ \begin{array} {l} q_m=\mathrm{Q}(\ell,d_2^m,1,d_2,r),\\
%        q_p=\mathrm{Q}(\ell,d_2^p,q_{p+1},d_2q_{p+1},r),\;p=m-1,...,1.
%        \end{array}  \right.
%    \]
We set $Q=q_1$. Clearly, $Q\mik Q_*$. Moreover, by inverse recursion we define a sequence $(M_p)_{p=1}^m$ of positive integers by the rule:
    \[
\left\{ \begin{array} {l} M_m=\mathrm{MTHJ}(d_0^m,d_1^m,0,b,\ell,1,r^{\ell^{Qd_2}Q^{d_2^m}}),\\
M_p=\mathrm{MTHJ}(d_0^p,d_1^p,0,b,\ell,M_{p+1},r^{\ell^{Qd_2}Q^{d_2^p}}),\;p=m-1,...,1.
\end{array}  \right.
\]
Set $M=M_1$ and observe that $M\mik M_*$.

For each $i$ in $[d]\setminus D_2$ we pick an $M$-complete skew subtree $S_i$ of $b^{<n}$ and for every $i$ in $D_2$ we pick a chain $S_i$ in $b^{<n}$ of length $Q$ such that
\[\max L(S_i)\mik\min L(S_{i+1})\]
for all $i$ in $[d-1]$. For each $i$ in $D_2$, let $(s^i_y)_{y=0}^{Q-1}$ be the elements of $S_i$ enumerated in $\sqsubseteq$-increasing order.
For each $i$ in $[d]\setminus D_2$, we also pick a subset $X_i^*$ of $b^n$ such that the set $S_i^*=S_i\cup X_i^*$ forms a complete skew subtree of $b^{\mik n}$.
Moreover, we set $d'=d-d_2$ and $\iota$ the unique increasing bijection from $[d']$ to $[d]\setminus D_2$. For each $p$ in $[m]$ we set $D_0'^p=\iota^{-1}(D_0^p\setminus D_2)$ and $D_1'^p=\iota^{-1}(D_1^p)$. Also let $\iota_*$ be the unique increasing bijection from $\{0,...,d_2-1\}$ to $D_2$ and $D_2'^p=\iota_*^{-1}(D_2^p)$ for each $p$ in $[m]$.

We fix some $\alpha$ in $\Lambda$ and for every $i$ in $[d]\setminus D_2$
  we define a map $Q_i:\w(b,M,\Lambda)\to\w(b,n,\Lambda)$ by the rule
  \[Q_i(\tilde{w})(t)=
\left\{ \begin{array} {l} \alpha, \;\;\;\;\;\;\;\;\;\;\;\;\;\;\;\;\;\;t\in b^{<n}\setminus S_i\\
                               \tilde{w}(\mathrm{I}_{S_i}^{-1}(t)), \;\;\;t\in S_i,\end{array}  \right.
\]
for each $\tilde{w}$ in $\w(b,M,\Lambda)$.
For every $j$ in $\{0,...,d_2-1\}$, we set
\[I_j=\{jQ,...,(j+1)Q-1\}.\]
Observe that for every $z$ in $\{0,...,d_2Q-1\}$ there exist unique $j_z$ in $\{0,...,d_2-1\}$ and $y_z$ in $\{0,...,Q-1\}$ such that $z=j_zQ+y_z$ and, in particular, $z\in I_{j_z}$.
For every $i$ in $D_2$,
 we define a map $P_i:\Lambda^{d_2Q}\to\w(b,n,\Lambda)$
by the rule
\[P_i((a_z)_{z=0}^{d_2Q-1})(t)=
\left\{ \begin{array} {l} \alpha, \;\;\;\;\;\;\;\;\;\;\;\;\;\;\;\;\;\;t\not\in S_i\\
                               a_{\iota_*^{-1}(i)Q+y}, \;\;\;\;\;t=s^i_y\;\text{for some}\;y\;\text{in}\;\{0,...,Q-1\}.\end{array}  \right.
\]
For every $p$ in $[m]$ we set $\mathcal{X}_p=\Lambda^{d_2Q}\times\prod_{j\in D_2'^p}I_j$ and we define an $r^{|\mathcal{X}_p|}$-coloring $c'_p$ on  $\w^{(D_0'^p,D_1'^p,\emptyset)}(b,M,\Lambda)$ by the rule
\[
\begin{split}
&c'_p(((\tilde{w}_j)_{j\in [d']},(x_j)_{j\in D_1'^p},\emptyset))\\
&\;\;\;\;\;\;\;\;=(c_p((Q_i(\tilde{w}_{\iota^{-1}(i)}))_{i\in [d]\setminus D_2}\cup(P_i(\mathbf{a}))_{i\in D_2},\\
&\;\;\;\;\;\;\;\;\;\;\;\;\;\;\;(\mathrm{I}_{S_i^*}(x_{\iota^{-1}(i)}))_{i\in D_1^p},(s^i_{y_{z_{\iota_*^{-1}(i)}}})_{i\in D_2^p}))_{(\mathbf{a},(z_j)_{j\in D_2'^p})\in\mathcal{X}_p}.
\end{split}\]
\begin{claim}\label{claim1}
  There exist an element $(f_j')_{j\in[d']}$ of $\w^{d'}_v(b,M,\Lambda)$ and subsets $X'_1,...,X'_{d'}$ of $b^M$ such that $((f_j')_{j\in[d']},(X_j)_{j\in D_1'^p})$ belongs to $\w^{(D_0'^p,D_1'^p,\emptyset)}_v(b,M,\Lambda)$ and generates an $1$-dimensional combinatorial subspace which is $c'_p$-monochromatic for all $p$ in $[m]$.
\end{claim}
\begin{proof}
  [Proof of claim] We define $(f^0_j)_{j\in[d']}$ by setting $f_j^0(t)=v_t$ for all $j$ in $[d']$ and $t$ in $b^{<M}$. Moreover, we set $X_1^0=...=X_{d'}^0$ to be the set of all nodes in $b^M$. We construct inductively a sequence $((f^p_j)_{j\in[d']})_{p=0}^m$ in $\w_v^{d'}(b,M,\Lambda)$ and sequences $(X_1^p)_{p=0}^m,...,(X_{d'}^p)_{p=0}^m$ of subsets of $b^M$ satisfying the following for all $p=1,...,m$.
  \begin{enumerate}
    \item[(a)] We have that $[(f^p_j)_{j\in[d']}]_\Lambda\subseteq[(f^{p-1}_j)_{j\in[d']}]_\Lambda$.
    \item[(b)] We have that $X_1^p\subseteq X_1^{p-1},...,X_{d'}^p\subseteq X_{d'}^{p-1}$.
    \item[(c)] We have that $((f^p_j)_{j\in[d']},(X_j^p)_{j\in D_1'^p})$ belongs to $\w^{(D_0'^p,D_1'^p,\emptyset)}_v(b,M,\Lambda)$ and generates an $M_p$-dimensional combinatorial subspace which is,
        in addition, $c'_p$-monochromatic.
  \end{enumerate}
  Assume that for some $p$ in $[m]$, we have chosen the sequences $((f^{p'}_j)_{j\in[d']})_{p'=0}^{p-1}$ and
  $(X_1^{p'})_{p'=0}^{p-1},...,(X_{d'}^{p'})_{p'=0}^{p-1}$ satisfying (a), (b) and (c) above. Recall that $c'_p$ is an
  $r^{|\mathcal{X}_p|}$-coloring and observe that $\mathcal{X}_p$ is of cardinality $\ell^{d_2Q}Q^{d_2^p}$. Also, observe
  that the spaces $[((f_j^{p-1})_{j\in[d']},(X^{p-1}_j)_{j\in D_1'^p})]_\Lambda$ and
  $\w^{(D_0'^p,D_1'^p,\emptyset)}_v(b,M_{p-1},\Lambda)$ are isomorphic and therefore, by the definition of the number
  $M_{p-1}$ and Proposition \ref{prop_d_2=0}, there exists $((f^p_j)_{j\in[d']},(X_j^p)_{j\in D_1'^p})$ that belongs to
  $\w^{(D_0'^p,D_1'^p,\emptyset)}_v(b,M,\Lambda)$, generates an $M_p$-dimensional combinatorial subspace which is
  $c'_p$-monochromatic, satisfies (a) and partially (b) (more precisely, we have that $X^p_j\subseteq X^{p-1}_j$, for all
  $j$ in $D_1'^p$). Picking for each $j$ in $[d']\setminus D_1'^p$ a subset $X_j^p$ of $X_j^{p-1}$ such that $\ws(f_j^p)\cup
  X_j^p$ forms a skew subtree of $b^{\mik M}$, the inductive step of the selection is complete. Finally, we set $f_j'=f_j^m$ and $X_j'=X_j^m$
  for all $j$ in $[d']$. The proof of the claim is complete.
\end{proof}
By Claim \ref{claim1} we can naturally induce colorings $c_1'',...,c_m''$ on $\mathcal{X}_1,...,\mathcal{X}_m$ respectively. More precisely, for each $p$ in $[m]$ let $(\gamma^p_{(\mathbf{a},\mathbf{z})})_{(\mathbf{a},\mathbf{z})\in\mathcal{X}_p}$ be the color that $c'_p$ attaches to each element of $[((f_j')_{j\in[d']},(X_j)_{j\in D_1'^p})]_\Lambda$. Then we define
$c''_p((\mathbf{a},\mathbf{z}))=\gamma^p_{(\mathbf{a},\mathbf{z})}$
for each $p$ in $[m]$ and $(\mathbf{a},\mathbf{z})$ in $\mathcal{X}_p$.
Clearly $c''_p$ is an $r$-coloring on $\mathcal{X}_p$.
\begin{claim}
  \label{claim2}
  There exists a $d_2$-dimensional word $w(v_0,...,v_{d_2-1})$ over $\Lambda$ of length $d_2Q$ satisfying the following.
  \begin{enumerate}
    \item[(1)] The wildcard set of $v_j$ in $w$ is contained in $I_j$ for all $j=0,...,d_2-1$.
    \item[(2)] For each $p$ in $[m]$ and for every choice of $a_0,...,a_{d_2-1}$ and $a'_0,...,a'_{d_2-1}$ in $\Lambda$ such that $a_j=a'_j$ for all $j$ in $D_2'^p$, we have that
        \[c''_p((a_0,...,a_{d_2-1},(\ell^w_j)_{j\in D_2'^p}))
        =c''_p((a'_0,...,a'_{d_2-1},(\ell^w_j)_{j\in D_2'^p})).\]
  \end{enumerate}
\end{claim}
\begin{proof}
  [Proof of Claim]
  The claim follows easily by an iterated use of Corollary \ref{Hales_Jewett_cor_v2}.
\end{proof}
We define for each $i$ in $[d]\setminus D_2$ an element $f_i$ of $\w_v(b,n,\Lambda)$ as follows.
\begin{enumerate}
  \item[(i$'$)] For each $t$ in $b^{<n}\setminus S_i$, we set $f_i(t)=\alpha$.
  \item[(ii$'$)] For each $t$ is $S_i$ such that $f'_{\iota^{-1}(i)}(\mathrm{I}_{S_i}^{-1}(t))$ belongs to $\Lambda$, we set
      $f_i(t)=f'_{\iota^{-1}(i)}(\mathrm{I}_{S_i}^{-1}(t))$.
  \item[(iii$'$)] For each $t$ is $S_i$ and $s$ in $b^{<M}$ such that $f'_{\iota^{-1}(i)}(\mathrm{I}_{S_i}^{-1}(t))=v_s$, we set $f_i(t)=v_{\mathrm{I}_{S_i}(s)}$.
\end{enumerate}
We write $w(v_0,...,v_{d_2-1})=(\beta_z)_{z=0}^{d_2Q-1}$. Moreover,
for every $i$ in $D_2$, we set $y_i^*=y_{\ell^w_{j}}$, where $j=\iota_*^{-1}(i)$.
For every $i$ in $D_2$ we define
an element $f_i$ of $\w_v(b,n,\Lambda)$ as follows.
\begin{enumerate}
  \item[(i$'$)] For each $t$ in $b^{<n}\setminus S_i$, we set $f_i(t)=\alpha$.
  \item[(ii$'$)] For each $y$ in $\{0,...,Q-1\}$ such that
      $\beta_{\iota_*^{-1}(i)Q+y}$ belongs to $\Lambda$, we set $f_i(s^i_y)=\beta_{\iota_*^{-1}(i)Q+y}$.
  \item[(iii$'$)] For each $y$ in $\{0,...,Q-1\}$ such that
      $\beta_{\iota_*^{-1}(i)Q+y}=v_{\iota_*^{-1}(i)}$, we set $f_i(s^i_{y})=v_{s^i_{y_i^*}}$.
\end{enumerate}
Finally, for each $i$ in $[d]\setminus D_2$, we set $X_i=I^{-1}_{S_i}(X'_{i^{-1}(i)})$. It follows readily by the definitions of the colorings $c'_1,...,c'_m, c''_1,...,c''_m$ and Claims
\ref{claim1} and \ref{claim2} that $(f_i)_{i\in[d]}$ and $(X_i)_{i\in[d]\setminus D_2}$ satisfy the conclusion of Lemma \ref{tree_HJ_dim_1}.
\end{proof}

Lemma \ref{tree_HJ_dim_1} has the following immediate consequence.

\begin{cor}
  \label{cor_k=1}
  For every choice of non negative integers  $d_0,d_1,d_2$ with $d_0+d_1+d_2\meg1$ and every choice of positive integers $b,\ell,r$, we have that
  $\mathcal{P}(d_0,d_1,d_2,b,\ell,1,r)$ holds.
\end{cor}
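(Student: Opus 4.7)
The plan is to derive Corollary \ref{cor_k=1} as an essentially immediate specialization of Lemma \ref{tree_HJ_dim_1}, namely the case $m=1$. Concretely, given non-negative integers $d_0,d_1,d_2$ with $d_0+d_1+d_2\meg 1$, set $d=d_0+d_1+d_2$, and let $\Lambda$ be any alphabet with $\ell$ letters, $\mathbf{D}=(D_0,D_1,D_2)$ be any partition of $[d]$ with $|D_i|=d_i$, and $c$ be any $r$-coloring of $\w^{\mathbf{D}}(b,n,\Lambda)$. I would apply Lemma \ref{tree_HJ_dim_1} with the values ``$d$'' taken to be $d$, ``$m$'' taken to be $1$, the subset ``$D_2$'' of the lemma taken equal to the $D_2$ supplied by $\mathbf{D}$, the single partition ``$\mathbf{D}_1$'' taken to be $\mathbf{D}$ itself, and the single coloring ``$c_1$'' taken to be $c$.

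The hypotheses of Lemma \ref{tree_HJ_dim_1} are automatic in this setting: $D_1^1 = D_1$ is disjoint from $D_2$ since $D_0, D_1, D_2$ partition $[d]$, and $D_2^1 = D_2 \subseteq D_2$ trivially. The conclusion of the lemma therefore produces, for every $n\meg h_2(d,1,b,\ell,r)$, a tuple $(f_i)_{i\in[d]}$ together with sets $(X_i)_{i\in[d]\setminus D_2}$ such that $((f_i)_{i\in[d]},(X_i)_{i\in D_1})$ lies in $\w^{\mathbf{D}}_v(b,n,\Lambda)$ and generates a $1$-dimensional $c$-good combinatorial subspace. This is precisely the assertion that $\mathcal{P}(d_0,d_1,d_2,b,\ell,1,r)$ holds, with the quantitative bound
\[\mathrm{MTHJ}(d_0,d_1,d_2,b,\ell,1,r)\mik h_2(d_0+d_1+d_2,1,b,\ell,r).\]

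There is no real obstacle here: all of the combinatorial work (the appeal to the variant Hales--Jewett Theorem \ref{Hales_Jewett*} via Corollary \ref{Hales_Jewett_cor_v2}, the induction handling the ``branch-sensitive'' colorings through Lemma \ref{branch_sensitive} and Proposition \ref{prop_d_2=0}, and the iteration over the sets in $D_2$) has already been absorbed into the proof of Lemma \ref{tree_HJ_dim_1}. The only mild check is that the notion of ``$c$-good'' from Theorem \ref{tree_HJ} coincides, in the one-coloring situation, with the insensitivity conclusion stated in Lemma \ref{tree_HJ_dim_1}: the latter asserts equality of $c$ on pairs agreeing in the $D_2$-coordinates both in the chosen nodes $t_i$ and in the letters $w_i(t_i)$, which is exactly the $c$-good condition.
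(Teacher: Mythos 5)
Your proof is correct and is essentially the argument the paper intends: the paper simply remarks that Corollary~\ref{cor_k=1} is an ``immediate consequence'' of Lemma~\ref{tree_HJ_dim_1}, and your specialization to $m=1$, with the lemma's $D_2$ taken to be the $D_2$ from $\mathbf{D}$ and $\mathbf{D}_1 = \mathbf{D}$, $c_1=c$, is exactly that specialization. One small remark: the final ``mild check'' is actually vacuous, since Lemma~\ref{tree_HJ_dim_1} already states its conclusion in terms of the combinatorial subspace being ``$c_p$-good'', so there is no separate insensitivity condition to reconcile with the definition of $c$-good used in Theorem~\ref{tree_HJ}.
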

\begin{proof}
  [Proof of Theorem \ref{tree_HJ}]
  The proof follows by a double induction scheme on $d_2$ and $k$.
  In particular, by Proposition \ref{prop_d_2=0},
  we have that
  $\mathcal{P}(d_0,d_1,0,b,\ell,k,r)$ holds,
  for every choice of non negative integers $d_0,d_1$ with $d_0+d_1\meg1$ and every choice of positive integers $b,\ell,k,r$.
  Moreover, by Corollary \ref{cor_k=1}, we have that $\mathcal{P}(d_0,d_1,d_2,b,\ell,1,r)$ holds,
  for every choice of non negative integers  $d_0,d_1,d_2$ with $d_0+d_1+d_2\meg1$ and every choice of positive integers $b,\ell,r$.
  We complete the proof by showing that $\mathcal{P}(d_0,d_1,d_2,b,\ell,k+1,r)$ holds, where $d_0,d_1$ are non negative integers and $d_2,b,\ell,k,r$ are positive integers, assuming that
  \begin{enumerate}
    \item[(i)] $\mathcal{P}(d_0',d_1',d_2',b,\ell,k',r')$ holds for every choice of non negative integers $d_0',d_1',d_2'$ with $d_0'+d_1'+d_2'\meg1$ and $d_2'<d_2$ and every choice of positive integers $k',r'$ and
    \item[(ii)] $\mathcal{P}(d_0',d_1',d_2,b,\ell,k',r')$  holds for every choice of non negative integers $d_0',d_1'$ and every choice of positive integers $k',r'$ with $k'\mik k$.
  \end{enumerate}

  Indeed, let us fix non negative integers $d_0,d_1$ and positive integers $d_2,b,\ell,k,r$. In order to produce an explicit upper bound for $\mathrm{MTHJ}(d_0,d_1,d_2,b,\ell,k+1,r)$ we define inductively a sequence of numbers $\big((M^{d_2'}_q)_{q=1}^{{d_2\choose d_2'}}\big)_{d_2'=0}^{d_2}$ by the rule
  \[
  \left\{ \begin{array} {l} M_1^0=\mathrm{MTHJ}(d_0,d_1,d_2,b,\ell,k,r),\\
M_1^{d_2'}=\mathrm{MTHJ}(d_0,d_1+d_2',d_2-d_2',b,\ell,M_{{d_2\choose d_2'-1}}^{d_2'-1},r^{\ell^{d_2}}),\;d_2'=1,...,d_2\\
M_{q+1}^{d_2'}=\mathrm{MTHJ}(d_0,d_1+d_2',d_2-d_2',b,\ell,M_q^{d_2'},r^{\ell^{d_2}}),\;q=1,...,{d_2\choose d_2'}-1.
\end{array}  \right.
\]
Set $M=M^{d_2}_1$. Moreover, we set
\[M'=h_2(b^M(d_0+d_1+d_2),b^{d_1M}(b^M+1)^{d_2},b,\ell,r^{\ell^{(d_0+d_1+d_2)\frac{b^M-1}{b-1}}\big(\frac{b^M-1}{b-1}\big)^{d_2}}).\]
  We will show that
  \[\mathrm{MTHJ}(d_0,d_1,d_2,b,\ell,k+1,r)\mik M+M'.\]

  Indeed, let $n$ be an integer with $n\meg M+M'$.
  Also let $\Lambda$ be an alphabet of cardinality $\ell$, let $\mathbf{D}=(D_0,D_1,D_2)$ be a partition of $[d_0+d_1+d_2]$ with $|D_i|=d_i$ for all $i=0,1,2$
and let $c$ be an $r$-coloring $c$ of $\w^\mathbf{D}(b,n,\Lambda)$. We need to find $(f_i)_{i\in[d]}$ in $\w_v^d(b,n,\Lambda)$ and a collection $(X_i)_{i\in D_1}$ such that $((f_i)_{i\in[d]},(X_i)_{i\in D_1})$ belongs to $\w_v^\mathbf{D}(b,n,\Lambda)$ and generates a $(k+1)$-combinatorial subspace which is $c$-good.
Roughly speaking, we will first construct the last level of the desired combinatorial subspace, using Lemma \ref{tree_HJ_dim_1}, by refining the last $n-M$ levels of our initial space and afterwards, using the inductive assumptions, we will refine the first $M$ levels of the initial space to obtain the first $k$ levels of the desired combinatorial subspace.

We set $d=d_0+d_1+d_2$ and $d'=db^M$. Also, for every $i$ in $[d]$ we set
\[J_i=\{(i-1)b^M+1,...,ib^M\}.\]
Clearly $(J_i)_{i\in[d]}$ forms a partition of $[d']$ into intervals of length $b^M$. Thus, for every $z$ in $[d']$ there exist unique $i_z$ in $[d]$ and $y_z$ in $[b^M]$ such that $z=(i_z-1)b^M+y_z$ and therefore $z\in J_{i_z}$. We also set
\[\mathcal{A}=\{D_1'\subseteq\bigcup_{i\in D_1}J_i:\;|D_1'\cap J_i|=1\;\text{for all}\;i\in D_1\}\]
and
\[\mathcal{B}=\{D_2'\subseteq\bigcup_{i\in D_2}J_i:\;|D_1'\cap J_i|\mik1\;\text{for all}\;i\in D_2\}.\]
Moreover, for every $D_2'$ in $\mathcal{B}$, we set
\[\mathrm{Pr}(D_2')=\{i\in D_2:\;D_2'\cap J_i\neq\emptyset\}\;\text{and}\;\overline{\mathrm{Pr}}(D_2')=D_2\setminus\mathrm{Pr}(D_2').\]
Let $(s_y)_{y=1}^{b^M}$ be an enumeration of the nodes in $b^M$ in $\lex$-increasing order.
For each $D'_1$ in $\mathcal{A}$ and $i$ in $D_1$, we denote by $z_{D'_1,i}$ the unique element $z$ in $D_1'$ such that $i=i_z$ and we set $\iota_1(D_1',i)=s_{y_{z_{D'_1,i}}}$.
Similarly, for each $D'_2$ in $\mathcal{B}$ and $i$ in $\mathrm{Pr}(D_2')$, we denote by $z_{D'_2,i}$ the unique element $z$ in $D_2'$ such that $i=i_z$ and we set $\iota_2(D_2',i)=s_{y_{z_{D'_2,i}}}$.
We also define a map
\[\iota_1:\mathcal{A}\to (b^M)^{D_1} \]
setting
\[\iota_1(D_1')=(\iota_1(D_1',i))_{i\in D_1}\]
for each $D_1'$ in $\mathcal{A}$. Similarly, we may define a map \[\iota_2:\mathcal{B}\to \bigcup_{\tilde{D}_2\subseteq D_2}(b^M)^{\tilde{D}_2}\]
setting
\[\iota_2(D_2')=(\iota_2(D_2',i))_{i\in \mathrm{Pr}(D_2')}\]
for each $D_2'$ in $\mathcal{B}$.

For every $D_2'$ in $\mathcal{B}$, we define a partition \[\mathbf{D}(D_2')=([d]\setminus\overline{\mathrm{Pr}}(D_2'),\emptyset,\overline{\mathrm{Pr}}(D_2'))\] of $[d]$.
Moreover, for every $(D_1',D_2')$ in $\mathcal{A}\times\mathcal{B}$, we define a partition
\[\mathbf{D}'(D_1',D_2')=([d']\setminus(D_1'\cup D_2'),D_1',D_2')\] of $[d']$,
and a map
\[Q_{D_1',D_2'}:\w^{\mathbf{D}(D_2')}(b,M,\Lambda)\times\w^{\mathbf{D}'(D_1',D_2')}(b,n-M,\Lambda)
\to \w^{\mathbf{D}}(b,n,\Lambda)\]
as follows. Pick  an element $\tilde{\mathbf{w}}=((\tilde{w}_{i})_{i\in[d]},\emptyset,(\tilde{t}_{i})_{i\in\overline{\mathrm{Pr}}(D_2')})$
of $\w^{\mathbf{D}(D_2')}(b,M,\Lambda)$ and an element $\mathbf{w}'=((w'_z)_{z\in[d']},(x'_z)_{z\in D_1'},(t'_z)_{z\in D_2'}))$ of $\w^{\mathbf{D}'(D_1',D_2')}(b,n-M,\Lambda)$.
For each $i$ in $[d]$, we define $w_i^{\tilde{\mathbf{w}},\mathbf{w}'}$ in $\w(b,n,\Lambda)$ as follows.
\begin{enumerate}
  \item[(a)] For every $s$ in $b^{<M}$, we set $w_i^{\tilde{\mathbf{w}},\mathbf{w}'}(s)=\tilde{w}_i(s)$.
  \item[(b)] For every $s$ in $b^{<n}\setminus b^{<M}$, there exist unique $y$ in $[b^M]$ and $t'$ in $b^{<n-M}$ such that
        $s_y\sqsubseteq s$ and $s=s_y^\con t'$. We set $w_i^{\tilde{\mathbf{w}},\mathbf{w}'}(s)=w'_{(i-1)b^M+y}(t')$.
\end{enumerate}
For every $i$ in $D_1$, we set $x_i^{\tilde{\mathbf{w}},\mathbf{w}'}=\iota_1(D_1',i)^\con x'_{z_{D_1',i}}$. For every $i$ in $\mathrm{Pr}(D_2')$, we set
$t_i^{\tilde{\mathbf{w}},\mathbf{w}'}=\iota_2(D_2',i)^\con t'_{z_{D_2',i}}$. For every $i$ in $\overline{\mathrm{Pr}}(D_2')$, we set $t_i^{\tilde{\mathbf{w}},\mathbf{w}'}=\tilde{t}_i$. Finally, we define
\[Q_{D_1',D_2'}(\tilde{\mathbf{w}},\mathbf{w}')=((w_i^{\tilde{\mathbf{w}},\mathbf{w}'})_{i\in[d]},(x_i^{\tilde{\mathbf{w}},\mathbf{w}'})_{i\in D_1},(t_i^{\tilde{\mathbf{w}},\mathbf{w}'})_{i\in D_2}).\]
It is easy to see that the map $Q_{D_1',D_2'}$ is injective and the images of $Q_{D_1',D_2'}$ as $(D_1',D_2')$ varies in
$\mathcal{A}\times\mathcal{B}$ form a partition of $\w^\mathbf{D}(b,n,\Lambda)$. Roughly speaking, the inverse of each map of the form $Q_{D_1',D_2'}$
splits the elements of the space $\w^{\mathbf{D}}(b,n,\Lambda)$ at the level $M$ and decomposes their part on the last $n-M$
levels into $d'$ pieces, one piece for each $i$ in $[d]$ and $s$ in $b^M$.

For every $(D_1',D_2')$ in $\mathcal{A}\times\mathcal{B}$, we define
an $r^{|\w^{\mathbf{D}(D_2')}(b,M,\Lambda)|}$-coloring $c_{D_1',D_2'}$ on $\w^{\mathbf{D}'(D_1',D_2')}(b,n-M,\Lambda)$ by the rule
\[c_{D_1',D_2'}(\mathbf{w}')=(c(Q_{D_1',D_2'}(\tilde{\mathbf{w}},\mathbf{w}')))_{\tilde{\mathbf{w}}\in\w^{\mathbf{D}(D_2')}(b,M,\Lambda)}.\]
Since the set $\w^{\mathbf{D}(D_2')}(b,M,\Lambda)$ is of cardinality $\ell^{d\frac{b^M-1}{b-1}}\Big(\frac{b^M-1}{b-1}\Big)^{|D_2'|}$,
the set $\mathcal{A}\times\mathcal{B}$ is of cardinality $b^{d_1M}(b^M+1)^{d_2}A$
 and
$M'\mik n-M$, by the definition of $M'$, applying Lemma \ref{tree_HJ_dim_1}, we obtain $(f'_z)_{z\in[d']}$ and $(X'_z)_{z\in[d']\setminus D_2^*}$, where $D_2^*=\bigcup_{i\in D_2}J_i$, such that
$((f'_z)_{z\in[d]},(X'_z)_{z\in D'_1})$ belongs to $\w^{\mathbf{D}'(D_1',D_2')}_v(b,n-M,\Lambda)$ and generates an $1$-dimensional $c_{D_1',D_2'}$-good combinatorial subspace for all $(D_1',D_2')$ in $\mathcal{A}\times\mathcal{B}$. Let us denote by $s'_z$ the unique element of $\ws(f'_z)$ for all $z$ in $[d']$.

By the choice of $(f'_z)_{z\in[d']}$ and $(X'_z)_{z\in[d']\setminus D_2^*}$, for every $(D'_1,D'_2)$ in $\mathcal{A}\times\mathcal{B}$ and every $\mathbf{a}=(a_z)_{z\in D_2'}$ in $\Lambda^{D'_2}$ there exists $\bar{\gamma}^{D'_1,D'_2}_\mathbf{a}=(\gamma^{D'_1,D'_2}_{\mathbf{a},\tilde{\mathbf{w}}})_{\tilde{\mathbf{w}}\in\w^{\mathbf{D}(D_2')}(b,M,\Lambda)}$ such that
\[c_{D'_1,D'_2}(\mathbf{w}')=\bar{\gamma}^{D'_1,D'_2}_\mathbf{a}\]
for all $\mathbf{w}'=((w'_z)_{z\in[d']},(x'_z)_{z\in D_1'},(t'_z)_{z\in D_2'}))$ in  $[(f'_z)_{z\in[d']},(X'_z)_{z\in D'_1}]_\Lambda$ satisfying $w'_z(s'_z)=a_z$
for all $z$ in $D'_2$ (also notice that since the combinatorial subspace $[(f'_z)_{z\in[d']},(X'_z)_{z\in D'_1}]_\Lambda$ is 1-dimensional, we have that $t'_z=s'_z$ for all $z$ in $D_2'$).

For every subset $\tilde{D}_2$ of $D_2$ we define on $\w^{(D_0,D_1\cup\tilde{D}_2,D_2\setminus \tilde{D}_2)}(b,M,\Lambda)$
an $r^{\ell^{|\tilde{D}_2|}}$-coloring $c_{\tilde{D}_2}$  by the rule
\[c_{\tilde{D}_2}(((\tilde{w}_i)_{i\in [d]},(\tilde{x}_i)_{i\in D_1\cup \tilde{D}_2},(\tilde{t}_i)_{i\in D_2\setminus\tilde{D}_2}))=(\gamma^{D_1',D_2'}_{\mathbf{a},\tilde{\mathbf{w}}})_{\mathbf{a}\in\Lambda^{D_2'}},\]
where $\tilde{\mathbf{w}}=((\tilde{w}_i)_{i\in [d]},\emptyset,(\tilde{t}_i)_{i\in D_2\setminus\tilde{D}_2})$, $D_1'=\iota_1^{-1}((\tilde{x}_i)_{i\in D_1})$ and $D_2'=\iota_2^{-1}((\tilde{x}_i)_{i\in \tilde{D}_2})$.

We set
\[\Gamma=\bigcup_{d_2'=0}^{d_2}\{d'_2\}\times\Bigg[{d_2\choose d'_2}\Bigg].\] Observe that $\Gamma$ is of cardinality
$2^{d_2}$. Let $\iota:\Gamma\to[2^{d_2}]$ be a bijection such that for every $(d_2',q)$ and $(\tilde{d}_2',\tilde{q})$ in
$\Gamma$ with $d'_2<\tilde{d}'_2$, or $d'_2=\tilde{d}'_2$ and $q<\tilde{q}$, we have that
$\iota((d_2',q))<\iota((\tilde{d}_2',\tilde{q}))$. Moreover, for every $y$ in $[2^{d_2}]$, we denote by $\mathrm{d_2'}(y)$ and $\mathrm{q}(y)$
the numbers satisfying $\iota(\mathrm{d_2'}(y),\mathrm{q}(y))=y$.
Also let $(\tilde{D}_2^y)_{y=1}^{2^{d_2}}$ be an enumeration of all subsets of $D_2$ such that for every $y$ in $[2^{d_2}]$,
the set $\tilde{D}_2^y$ is of cardinality $\mathrm{d_2'}(y)$. Let us point out that only $\tilde{D}_2^1$ is empty.

For each $i$ in $[d]$ we define $\tilde{f}_i^{2^{d_2}+1}$ in $\w_v(b,M,\Lambda)$ setting $\tilde{f}_i^{2^{d_2}+1}(t)=v_t$ for all $t$ in $b^{<M}$. Moreover, for every $i$ in $D_1\cup D_2$, we set $\tilde{X}_i^{2^{d_2}+1}$ to be the set of all nodes in $b^{M}$.
Clearly $(\tilde{f}_i^{2^{d_2}+1})_{i\in[d]}$ is of height $M=M^{d_2}_1=M^{\mathrm{d'_2}(2^{d_2})}_{\mathrm{q}(2^{d_2})}$ and for every $i$ in $[d]$ we
have that $\ws(\tilde{f}_i^{2^{d_2}+1})\cup \tilde{X}_i^{2^{d_2}+1}$ is a complete skew subtree of $b^{\mik M}$.
We construct sequences $((\tilde{f}_i^p)_{i\in[d]})_{p\in[2^{d_2}]}$ and $((\tilde{X}_i^p)_{i\in D_1\cup
D_2})_{p\in[2^{d_2}]}$ satisfying for every $p$ in $[2^{d_2}]$ the following.
\begin{enumerate}
  \item[(1)] We have that $(\tilde{f}_i^p)_{i\in[d]}$ belongs to $\w_v^d(b,M,\Lambda)$.
  \item[(2)] We have that   $[(\tilde{f}_i^p)_{i\in[d]}]_{\Lambda}\subseteq[(\tilde{f}_i^{p+1})_{i\in[d]}]_\Lambda$.
  \item[(3)] The set $\tilde{X}_i^p$ is a subset of $\tilde{X}_i^{p+1}$ for all $i$ in $D_1\cup D_2$.
  \item[(4)] The set $\mathrm{ws}(\tilde{f}_i^p)\cup \tilde{X}_i^p$ is a complete skew subtree of $b^{\mik M}$ for all $i$ in $D_1\cup D_2$.
  \item[(5)] If $p>1$, then  $(\tilde{f}_i^p)_{i\in[d]}$ is of height $M^{\mathrm{d'_2}(p-1)}_{\mathrm{q}(p-1)}$, while $(\tilde{f}_i^1)_{i\in[d]}$ is of height $k$.
  \item[(6)] The combinatorial subspace generated by $((\tilde{f}_i^p)_{i\in[d]},(\tilde{X}_i^p)_{i\in D_1\cup\tilde{D}_2^p})$ is $c_{\tilde{D}_2^p}$-good.
\end{enumerate}
The construction follows easily, by the definition of the numbers $((M^{d_2'}_q)_{q=1}^{{d_2\choose d_2'}})_{d_2'=0}^{d_2}$, applying inverse induction for $p=2^{d_2},2^{d_2}-1,...,1$ and using the inductive assumptions. Setting $\tilde{f}_i=\tilde{f}_i^1$ for all $i$ in $[d]$ and $\tilde{X}_i=\tilde{X}_i^1$ for all $i$ in $D_1\cup D_2$, we have that $((\tilde{f}_i)_{i\in[d]},(\tilde{X}_i)_{i\in D_1\cup\tilde{D}_2})$ generates a $k$-dimensional combinatorial subspace which is $c_{\tilde{D}_2}$-good
for each subset $\tilde{D}_2$ of $D_2$. Moreover, for each $i$ in $D_0$, we pick arbitrarily a subset $\tilde{X}_i$ of $b^{M}$ such that $\ws(\tilde{f}_i)\cup\tilde{X}_i$ is a complete skew subtree of $b^{\mik M}$.

Finally, we are ready to define the objects that will complete the proof of the theorem.
For every $i$ in $[d]$ we define the following. Let $y^i_1<...<y^i_{b^k}$ in $[b^M]$ such that $\tilde{X}_i=\{s_{y^i_j}:j\in[b^k]\}$. Also set
\[
\begin{split}
  &Y_i=\{y^i_j:j\in[b^k]\},\;\bar{Y}_i=[b^k]\setminus Y_i, \\ &Z_i=\{(i-1)b^M+y:y\in Y_i\}\;\text{and}\;\bar{Z}_i=J_i\setminus Z_i.
\end{split}
\]
For every $z$ in $\bar{Z}_i$, we pick an element $w'_z$ of $[f'_z]_\Lambda$. We define $f_i$ as follows.
\begin{enumerate}
  \item[(i)] For every $t$ in $b^{<M}$, we set $f_i(t)=\tilde{f}_i(t)$.
  \item[(ii)] For every $t$ in $b^{n-M}$ and $y$ in $\bar{Y}_i$, we set $f_i(s_y^\con t)=w'_{(i-1)b^M+y}(t)$.
  \item[(iii)] For every $t$ in $b^{n-M}$ and $y$ in $Y_i$ such that $f'_{(i-1)b^M+y}(t)$ belongs to $\Lambda$, we set $f_i(s_y^\con t)=f'_{(i-1)b^M+y}(t)$.
  \item[(iv)] For every $t$ in $b^{n-M}$ and $y$ in $Y_i$ such that $f'_{(i-1)b^M+y}(t)=v_{s'_{(i-1)b^M+y}}$, we set $f_i(s_y^\con t)=v_{s_y^\con s'_{(i-1)b^M+y}}$.
\end{enumerate}
Finally, for every $i$ in $D_1$, we set
\[X_i=\{s_y^\con x':\;y\in Y_i\;\text{and}\;x'\in X'_{(i-1)b^M+y}\}.\]
It follows by the definitions of the colorings $(c_{D'_1,D'_2})_{(D'_1,D'_2)\in\mathcal{A}\times\mathcal{B}}$ and $(c_{\tilde{D}_2})_{\tilde{D}_2\subseteq D_2}$, as well as, by the choice of $(f'_z)_{z\in[d']}$, $(X'_z)_{z\in[d']\setminus D_2^*}$,
$(\tilde{f}_i)_{i\in[d]}$ and $(\tilde{X}_i)_{i\in[d]}$ that $((f_i)_{i\in[d]},(X_i)_{i\in D_1})$ belongs to $\w_v^{\mathbf{D}}(b,n,\Lambda)$ generating a $(k+1)$-dimensional $c$-good subspace as desired. The proof of Theorem \ref{tree_HJ} is complete.
\end{proof}

\subsection{Consequences}
We will actually use an immediate consequence of Theorem \ref{tree_HJ} concerning smooth colorings of some special type
of combinatorial subspaces. Below we state the relevant definitions.

\begin{defn}
  Let $d,b,n$ be positive integers, let $\mathbf{D}=(D_0,D_1,D_2)$ be a partition of $[d]$ and let $\Lambda$ be a finite
  alphabet. Set $d_2 = |D_2|$ and write $D_2= \{r_1<r_2<...<r_{d_2}\}$. We denote by $\w^\mathbf{D}_*(b,n,\Lambda)$ the set of all $((w_i)_{i\in[d]}, (x_i)_{i\in D_1}, (t_i)_{i\in D_2})$ in $\w^\mathbf{D}(b,n,\Lambda)$ such that  $(t_{r_i})_{i=1}^{d_2}$ is an element of
  $\mathcal{CT}_1(\mathbf{T}^{d_2}_{b,n})$.
  Moreover, for every $\big((f_i)_{i=1}^d,(X_i)_{i\in D_1}\big)$ in $\w_v^\mathbf{D}(b,n,\Lambda)$, we define
  \[\big[\big((f_i)_{i=1}^d,(X_i)_{i\in D_1}\big)\big]_\Lambda^* =
  \big[\big((f_i)_{i=1}^d,(X_i)_{i\in D_1}\big)\big]_\Lambda \cap
  \w^\mathbf{D}_*(b,n,\Lambda).\]
\end{defn}

\begin{defn}
  Let $d,b,n$ be positive integers, let $\mathbf{D}=(D_0,D_1,D_2)$ be a partition of $[d]$ and let $\Lambda$ be a finite alphabet.
  We say that a finite coloring $c$ of $\w^\mathbf{D}(b,n,\Lambda)$ is smooth if
  there exist disjoint subsets $\Gamma_1$ and $\Gamma_2$ (possibly empty) of $D_2$
  with $\Gamma_1\cup\Gamma_2 = D_2$ satisfying the following property. For every two elements $((w_i)_{i\in[d]}, (x_i)_{i\in D_1}, (t_i)_{i\in D_2})$ and $((w'_i)_{i\in[d]}, (x'_i)_{i\in D_1}, (t'_i)_{i\in D_2} )$ of $\w^\mathbf{D}(b,n,\Lambda)$ such that
  \begin{enumerate}
    \item[(a)] $w_i(t)=w'_i(t)$ for all $i$ in $D_0\cup D_1 \cup \Gamma_1$ and $t$ in $b^{<n}$,
    \item[(b)] $x_i=x'_i$ for all $i$ in $D_1$ and
    \item[(c)] $t_i=t'_i$ and $w_i(t)=w'_i(t)$ for all $i$ in $\Gamma_2$ and $t$ in $b^{<n}\setminus \mathrm{Succ}_{b^{<n}}(t_i)$,
  \end{enumerate}
  we have that
\[c(((w_i)_{i\in[d]}, (x_i)_{i\in D_1}, (t_i)_{i\in D_2} ))=
c(((w'_i)_{i\in[d]}, (x'_i)_{i\in D_1}, (t'_i)_{i\in D_2} )).\]
\end{defn}

\begin{cor}
  \label{tree_HJ_cor}
  Let $d_0,d_1,d_2$ be non negative integers with $d_0+d_1+d_2\meg1$ and
  let $b,\ell,k,r$ be positive integers. For every alphabet $\Lambda$ of cardinality $\ell$, every integer $n$ with
  \[n\meg \mathrm{MTHJ}(d_0,d_1,d_2,b,\ell,\mathrm{CT}(1,k,b,d_2,r),r),\]
   every partition $\mathbf{D}=(D_0,D_1,D_2)$ of $[d_0+d_1+d_2]$ with $|D_i|=d_i$ for all $i=0,1,2$
and every smooth $r$-coloring $c$ of $\w^\mathbf{D}(b,n,\Lambda)$,
there exists $\big((f_i)_{i=1}^d,(X_i)_{i\in D_1}\big)$ in $\w_v^\mathbf{D}(b,n,\Lambda)$ generating a $k$-dimensional combinatorial subspace such that the set $[\big((f_i)_{i=1}^d,(X_i)_{i\in D_1}\big)]_\Lambda^*$ is monochromatic.
\end{cor}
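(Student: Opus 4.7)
The plan is to combine Theorem \ref{tree_HJ} with the Ramsey theorem for complete skew subtrees (Theorem \ref{thm:RamseyCompleteSkewSubtrees}) in two steps. First I would apply Theorem \ref{tree_HJ} with inner dimension $K = \mathrm{CT}(1,k,b,d_2,r)$ to pass to a $c$-good subspace, then exploit the interaction between $c$-goodness and smoothness to reduce the restricted color to a function of very few parameters, and finally apply the tree Ramsey theorem to refine the vector skew subtree part to height $k$.

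Concretely, since $n \geq \mathrm{MTHJ}(d_0,d_1,d_2,b,\ell,K,r)$, Theorem \ref{tree_HJ} yields $((f_i)_{i\in[d]},(X_i)_{i\in D_1}) \in \w_v^\mathbf{D}(b,n,\Lambda)$ generating a $K$-dimensional $c$-good subspace. The key observation I would prove is that, on any $c$-good subspace for a smooth coloring, the color depends only on $(t_i)_{i\in\Gamma_2}$. Indeed, $c$-goodness already forces the color to factor through $(t_i, w_i(t_i))_{i\in D_2}$. For $i \in \Gamma_2$, smoothness condition (c) allows us to change $w_i$ on $\mathrm{Succ}_{b^{<n}}(t_i)$ — and in particular the value $w_i(t_i)$ — without changing the color, so the $\Gamma_2$-coordinates of $w_i(t_i)$ disappear. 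For $i \in \Gamma_1$, given two configurations with $(t_i,w_i(t_i)) = (\tau,\alpha)$ and $(\tau',\beta)$, I would construct a word $\tilde w_i$ with $\tilde w_i(\tau)=\alpha$ and $\tilde w_i(\tau')=\beta$ (which is always possible because $f_i(t)=v_t$ for $t\in\ws(f_i)$), apply $c$-goodness to replace $w_i$ by $\tilde w_i$, then apply smoothness condition (a) to move $t_i$ from $\tau$ to $\tau'$, and finally apply $c$-goodness again; this gives full color-independence of the $\Gamma_1$-coordinates.

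Consequently the rule $\hat{c}((t_i)_{i\in D_2}) := c((w_i)_{i\in[d]},(x_i)_{i\in D_1},(t_i)_{i\in D_2})$ is a well-defined $r$-coloring of $\mathcal{CT}_1((\ws(f_i))_{i\in D_2})$, independent of the auxiliary choices of $(w_i)$, $(x_i)$, and $(w_i(t_i))_{i\in\Gamma_1}$. The vector tree $(\ws(f_i))_{i\in D_2}$ is a vector $K$-complete skew subtree of $\mathbf{T}^{d_2}_{b,n}$, canonically isomorphic to $\mathbf{T}^{d_2}_{b,K}$. Since $K \geq \mathrm{CT}(1,k,b,d_2,r)$, Theorem \ref{thm:RamseyCompleteSkewSubtrees} provides a vector $k$-complete skew subtree $\mathbf{S} = (S_i)_{i\in D_2}$ of $(\ws(f_i))_{i\in D_2}$ such that $\hat{c}$ is constant on $\mathcal{CT}_1(\mathbf{S})$.

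To finish, I would extend $(S_i)_{i\in D_2}$ to a vector $k$-complete skew subtree $(S_i)_{i\in[d]}$ of $(\ws(f_i))_{i\in[d]}$ by selecting, for each $i\in D_0\cup D_1$, a $k$-complete skew subtree of $\ws(f_i)$ supported on the same level pattern that $\mathbf{S}$ uses inside the vector $K$-complete skew subtree $(\ws(f_i))_{i\in[d]}$; such a choice exists because $\ws(f_i) \cong b^{<K}$ is $K$-complete and the level structure is inherited across $[d]$. I then define $f'_i$ by keeping $f_i(t)$ at $v_t$ for $t \in S_i$, substituting a fixed letter $\alpha$ for each $v_t$ with $t \in \ws(f_i)\setminus S_i$, and leaving the constant positions of $f_i$ unchanged; for $i \in D_1$ I choose $X'_i \subseteq X_i$ so that $\ws(f'_i) \cup X'_i$ is a skew subtree with the required properties. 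The resulting $((f'_i),(X'_i))$ belongs to $\w_v^\mathbf{D}(b,n,\Lambda)$, is contained in the original $c$-good subspace, and has dimension $k$; for any element of $[((f'_i),(X'_i))]_\Lambda^*$ the tuple $(t_i)_{i\in D_2}$ lies in $\mathcal{CT}_1(\mathbf{S})$, so its color equals the constant value of $\hat{c}$. I expect the main obstacle to be the bookkeeping in this last step, namely verifying that $(S_i)_{i\in[d]}$ can indeed be realized as $(\ws(f'_i))_{i\in[d]}$ for a $k$-dimensional variable word contained in the original subspace, and that the canonical identification $(\ws(f_i))_{i\in D_2} \cong \mathbf{T}^{d_2}_{b,K}$ respects the level conditions in the definitions of vector $k$-complete skew subtrees.
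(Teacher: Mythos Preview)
The paper states Corollary~\ref{tree_HJ_cor} without proof, and your two-step approach---apply Theorem~\ref{tree_HJ} with inner dimension $K=\mathrm{CT}(1,k,b,d_2,r)$ to obtain a $c$-good subspace, combine $c$-goodness with smoothness to reduce the color on that subspace to a function of $(t_i)_{i\in D_2}$, and then apply Theorem~\ref{thm:RamseyCompleteSkewSubtrees} to the resulting coloring of $\mathcal{CT}_1$---is precisely the argument encoded by the bound in the statement and is what the authors intend. Your handling of the interaction between $c$-goodness and the $\Gamma_1/\Gamma_2$ split is correct (note that $f_i^{-1}(v_{t_i})\subseteq\mathrm{Succ}_{b^{<n}}(t_i)$ makes the $\Gamma_2$ step go through, and the ``bounce'' argument for $\Gamma_1$ works whenever $K\geq 2$), and the bookkeeping you flag in the final refinement step is routine once one uses the canonical isomorphisms $\mathrm{I}_{\ws(f_i)}:b^{<K}\to\ws(f_i)$ together with the level conditions~(ii) and~(iii) for vector complete skew subtrees.
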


\section{Main result}
This section is devoted to the proof of Theorem \ref{point_subsets} below which is equivalent to Theorem \ref{tree_GR} stated in the introduction.

\subsection{Semi-complete skew subtrees and related variable word spaces}

Let $b,n$ be positive integers, let $\Lambda$ be a finite alphabet and let $T$ be a skew subtree.
A $T$-variable word $f$ of $\w(b,n,\Lambda)$ is a map from $b^{<n}$ into $\Lambda\cup\{v_t:t\in T\}$ such that for every $t$ in $T$ the set $f^{-1}(v_t)$ is non empty and has $t$ as a $\sqsubseteq$-minimum.
We denote by $\w_{v,T}^\times(b,n,\Lambda)$ the set of all $T$-variable words of $\w(b,n,\Lambda)$.
Notice that the definition is identical with the case when $T$ is a complete skew subtree.
For a positive integer $l$, we denote by $\w_{v,l}^\times(b,n,\Lambda)$
the union of $\w_{v,T}(b,n,\Lambda)$ over all possible skew subtrees $T$ of $b^{<n}$ of cardinality $l$.
We also set $\w_{v,0}^\times(b,n,\Lambda) = \w(b,n,\Lambda)$.
 Finally, we denote by $\w_v^\times(b,n,\Lambda)$ the union of $\w_{v,l}^\times(b,n,\Lambda)$ over all non negative integers $l$ with $l\mik \frac{b^n-1}{b-1}$.
 Observe that for every $f$ in $\w_v^\times(b,n,\Lambda)$, we have that either $f$ belongs to $\w_{v,0}^\times(b,n,\Lambda)$,
 or there exists unique positive integer $l$, which we denote by $\mathrm{sz}(f)$, and unique skew subtree $T$ of
 $b^{<n}$ with $l$ elements,
 which we denote by $\ws(f)$ such that $f$ is a $T$-variable word.
 In an identical way as in the case of $T$ being complete skew subtree, we define substitutions and spans (which we denote again by $[f]_\Lambda$) for the elements of $\w^\times_v(b,n,\Lambda)$ (if $f$ belongs to $\w_{v,0}^\times(b,n,\Lambda)$, then we set $[f]_\Lambda = \{f\}$).
%As usual, for variable words we consider substitutions. Let $f$ in $\w_v(b,n,\Lambda)$.
%If $(\alpha_t)_{t\in \ws(f)}\in \Lambda^{\ws(f)}$, then we denote by $f\big((\alpha_t)_{t\in \ws(f)}\big)$ the constant word resulting by substituting each occurrence of $v_t$ in $f$ by $\alpha_t$ for all $t$ in $\ws(f)$.
%Moreover, we define the span of $f$ as
%\[[f]_\Lambda=\{f\big((\alpha_t)_{t\in \ws(f)}\big): (\alpha_t)_{t\in \ws(f)}\in \Lambda^{\ws(f)}\}.\]
%We also view the span of a variable word as a combinatorial subspace of $\w(b,n,\Lambda)$.

\begin{defn}
  Let $b$ and $n$ be positive integers. A skew subtree $S$ of $b^{<n}$ is called semi-complete if every non-maximal node with respect to $\sqsubseteq$ has exactly $b$ immediate successors in $S$. Moreover, for a semi-complete skew subtree $S$, we define its interior $\mathrm{Int}(S)$ to be the set of all non-maximal nodes in $S$ with respect to $\sqsubseteq$.
\end{defn}
We have the following easy to observe remark.
\begin{rem}
  Let $b,n$ be positive integers and let $S$ be a semi-complete subtree of $b^{<n}$. Notice that $\mathrm{Int}(S)$ is
  empty if and only if $S$ is a singleton. Moreover, notice that if $\mathrm{Int}(S)$ is non empty then $\mathrm{Int}(S)$
  is a skew subtree.
\end{rem}

Let $b,n$ be positive integers and let $l$ be a non negative integer. We denote by $\w_{v,l}^*(b,n,\Lambda)$ the set of all pairs $(S,g)$ such that
\begin{enumerate}
  \item[(i)] $S$ is a semi-complete skew subtree of $b^{<n}$ having an interior with $l$ elements,
  \item[(ii)] $g$ belongs to $\w^\times_{v,l}(b,n,\Lambda)$ and
  \item[(iii)] if $l$ is positive then $\ws(g) = \mathrm{Int}(S)$.
\end{enumerate}
Moreover, for an element $f$ of $\w_v(b,n,\Lambda)$
and an element $(S,g)$ of $\w_{v,l}^*(b,n,\Lambda)$, we write $(S,g)\leq f$ if $S$ is a subtree of $\ws(f)$ and $[g]_\Lambda \subseteq [f]_\Lambda$. We also set
\[\w_{v,l}^*(f) = \{(S,g) \in \w_{v,l}^*(b,n,\Lambda) : (S,g) \leq f\} .\]

\subsection{The main result} The main result is a Ramsey type statement for structures of the form $ \w_{v,l}^*(b,n,\Lambda)$. However, it does not with arbitrary coloring but with these described in the following
definition.

\begin{defn}
  Let $b,n$ be positive integers, let $l$ be a non negative integer and let $\Lambda$ be a finite alphabet.
  First, we define a subset $A$ of $b^{<n}$ as follows. If $l$ is positive, then
  denote by $T$ the subtree of $b^{<n}$ consisting of the first $l$ elements of $b^{<n}$ with respect to $\preccurlyeq$
  and set
  \[A = \bigcup_{t\in T}\mathrm{ImSucc}_{b^{<n}}(t)\setminus T.\]
  If $l=0$ then set $A = \{\emptyset\}$, where by $\emptyset$ here we denote the empty sequence, that is, the root of the
  tree $b^{<n}$.
  Finally, let $f$ in $\w_v(b,n,\Lambda)$.
  We say that a coloring $c$ of $\w_{v,l}^*(b,n,\Lambda)$ is simple in $\w_{v,l}^*(f)$ if there exist disjoint subsets $B_1$ and $B_2$ (possibly empty) of $A$
  with $A= B_1\cup B_2$ satisfying the following.
  For every $(S,g)$ and $(S',g')$ in $\w_{v,l}^*(f)$ such that
  \begin{enumerate}
    \item [(i)] $\mathrm{Int}(S) = \mathrm{Int}(S')$,
    \item [(ii)] $\mathrm{I}_{S}(t) = \mathrm{I}_{S'}(t)$ for every $t$ in $B_2$ and
    \item [(iii)] $g(s) = g'(s)$ for every $s$ in $b^{<n}\setminus \bigcup_{t\in B_2}\mathrm{Succ}\big{(}\mathrm{I}_{S}(t)\big{)}$,
  \end{enumerate}
   we have that
  \[c\big((S,g)\big) = c\big((S',g')\big).\]
  Moreover, we say that a coloring $c$ of $\w_{v,l}^*(b,n,\Lambda)$ is simple if $c$ is simple in $\w_{v,l}^*(f)$,
  where $f(t) = v_t$ for every $t$ in $b^{<n}$.
\end{defn}

The following is the main result of the paper.

\begin{thm}
  \label{point_subsets}
  For every choice of positive integers $b,\ell, m,r$ and a non negative integer $l$ there exists a positive integer $n_0$ with the following property. For every integer $n$ with $n\meg n_0$ and every simple
  $r$-coloring of $\w_{v,l}^*(b,n,\Lambda)$ there exists $f$ in $\w_{v,m}(b,n,\Lambda)$ such that the set $\w_{v,l}^*(f)$ is monochromatic.
  We denote the least such $n_0$ by $\mathrm{PTGR}(l,m,b,r)$.
\end{thm}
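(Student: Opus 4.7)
I would prove Theorem \ref{point_subsets} by induction on $l$, reducing each inductive step to Corollary \ref{tree_HJ_cor}.

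For the base case $l=0$, the space $\w_{v,0}^*(b,n,\Lambda)$ is naturally identified with $\w^\bullet(b,n,\Lambda)=\w(b,n,\Lambda)\times b^{<n}$, which is $\w^\mathbf{D}(b,n,\Lambda)$ for $\mathbf{D}=(\emptyset,\emptyset,\{1\})$. The canonical leaf set in the definition of simple coloring is $A=\{\emptyset\}$, so a simple coloring with $B_2=\{\emptyset\}$ is precisely a smooth coloring with $\Gamma_2=\{1\}$ (the clauses on $\mathrm{I}_S(\emptyset)$ and on $g$ outside $\mathrm{Succ}(\mathrm{I}_S(\emptyset))$ translate directly into the smoothness conditions on the single $D_2$-coordinate), while $B_2=\emptyset$ is trivially smooth. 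Applying Corollary \ref{tree_HJ_cor}, I obtain $f\in\w_{v,m}(b,n,\Lambda)$ such that the monochromatic set $[f]_\Lambda^*$ equals $\w_{v,0}^*(f)$, since when $d_2=1$ the $*$-restriction is vacuous (every single node is a $1$-complete skew subtree).

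For the inductive step with $l\geq 1$, given a simple $r$-coloring $c$ of $\w_{v,l}^*(b,n,\Lambda)$ with partition $A=B_1\cup B_2$, I unpack each $(S,g)$ as a triple: the interior $\tilde T=\mathrm{Int}(S)$ (a skew subtree of $b^{<n}$ whose abstract shape is determined by $b$ and $l$), the positions $(\mathrm{I}_S(a))_{a\in A}$ of the leaves of $S$ (split via the partition of $A$), and the variable word $g$ with $\ws(g)=\tilde T$. Setting $d_2=|B_2|$, I would encode the $B_2$-leaf positions as the $D_2$-coordinates of a mixed product $\w^\mathbf{D}(b,n,\Lambda)$, with additional ``word'' coordinates in $D_0$ (and possibly $D_1$) capturing the variable word $g$ and the $B_1$-leaves. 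The simple coloring's dependence---only on $\tilde T$, on $(\mathrm{I}_S(t))_{t\in B_2}$, and on $g$ off $\bigcup_{t\in B_2}\mathrm{Succ}(\mathrm{I}_S(t))$---pulls back to the smoothness condition (matching $\Gamma_2=D_2$ plus a ``matters everywhere'' part $\Gamma_1$ for the interior and for the remaining word data). Corollary \ref{tree_HJ_cor} then produces a variable tuple generating a large combinatorial subspace that is monochromatic on its $*$-part, from which a single $f\in\w_{v,m}(b,n,\Lambda)$ can be assembled: since the $*$-restriction requires the $D_2$-coordinates to form a vector $1$-complete skew subtree, these arrange themselves precisely as an antichain of leaves attached to $\ws(f)$. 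The $B_1$-leaves contribute coloring-irrelevant degrees of freedom, which I would absorb either through the inductive hypothesis at level $l-1$ (peeling off the last-in-$\preccurlyeq$ interior node of the canonical $T$ together with its leaf-children) or via a prior application of Theorem \ref{thm:RamseyCompleteSkewSubtrees}.

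\emph{The main obstacle} is executing the encoding cleanly. The interior $\tilde T$ varies over many embeddings into $b^{<n}$, and the variable word $g$'s support depends on $\tilde T$, so the mixed product $\w^\mathbf{D}(b,n,\Lambda)$---whose combinatorial type is fixed---does not bijectively match $\w_{v,l}^*(b,n,\Lambda)$. Resolving this will likely require first applying Theorem \ref{thm:RamseyCompleteSkewSubtrees} to fix a large ambient complete skew subtree inside $b^{<n}$, inside which the encoding can be made combinatorially rigid; then running Corollary \ref{tree_HJ_cor} at the level of the variable words; and finally verifying that the extraction step yields an $f\in\w_{v,m}(b,n,\Lambda)$ respecting both the constraint $S\subseteq\ws(f)$ and the skew-subtree property of $S$. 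The interplay between these two Ramsey-type results and the inductive hypothesis, together with the careful bookkeeping of dimensions $d_0,d_1,d_2$, is the technical heart of the argument.
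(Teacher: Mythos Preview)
Your outline---induction on $l$, with the base case $l=0$ handled by Corollary \ref{tree_HJ_cor}---matches the paper's, and your treatment of the base case is correct. The gap is in the inductive step, and it is exactly the obstacle you name: the interior $\mathrm{Int}(S)$ varies, so there is no single mixed product $\w^\mathbf{D}$ into which all of $\w_{v,l}^*(b,n,\Lambda)$ embeds. Your proposed fixes do not resolve this. A prior application of Theorem \ref{thm:RamseyCompleteSkewSubtrees} would stabilize tree shapes but not the variable-word data $g$, whose support moves with the interior; and jumping straight to the inductive hypothesis at level $l-1$ presupposes that the color already factors through something of size $l-1$, which is what has to be proved.

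The paper fills this gap with the notion of \emph{signature} (Definition \ref{defn_sign}): for $(S,g)\in\w_{v,l}^*$, set $s_*=\max_\preccurlyeq\mathrm{Int}(S)$ and record $\mathrm{Int}(S)$, the restriction $g\upharpoonright\{t:t\preccurlyeq s_*,\ t\neq s_*\}$, and those leaves of $S$ that are already forced at the level of $s_*$. For a \emph{fixed} signature $\mathcal{S}$ observable by $f$ at a node $t_0\in\ws(f)$, the residual freedom in $(S,g)$ (the remaining leaf positions and the values of $g$ past $s_*$) lives naturally in a product $\w^\mathbf{D}_*(b,n',\Lambda')$ indexed by the $\sqsubseteq$-minimal nodes of $\ws(f)$ above $t_0$, and simplicity of $c$ translates into smoothness of the induced coloring; one application of Corollary \ref{tree_HJ_cor} makes $[\mathcal{S},f']$ monochromatic for some $f'\in\langle t_0,f\rangle_m$ (Lemma \ref{lem_ind_step}). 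Iterating this over all signatures observable at nodes of height below $m$ (Lemma \ref{lem_ind}) yields an $f_1$ on which the color of any $(S,g)\in\w_{v,l}^*(f_1)$ depends only on $\mathrm{sg}(S,g)$. \emph{Only then} does the reduction to $l-1$ go through: each $(\tilde S,\tilde g)\in\w_{v,l-1}^*(f_2)$ canonically determines an $l$-signature by promoting its first leaf $\min_\preccurlyeq(\tilde S\setminus\mathrm{Int}(\tilde S))$ to an interior node, this map covers all relevant $l$-signatures, and the pulled-back coloring is simple at level $l-1$. Your ``peeling off the last-in-$\preccurlyeq$ interior node'' is the right intuition for this final move, but the signature-stabilization step (Lemmas \ref{lem_ind_step} and \ref{lem_ind}) must come first, and is the part your plan does not supply.
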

Clearly Theorems \ref{tree_GR} and \ref{point_subsets} are equivalent.
In particular, for every choice of positive integers $k,m,b,r$ and a non negative integer $l$, we have that
\[\mathrm{TGR}(k,m,b,\ell,r)\mik\mathrm{PTGR}\Big(\frac{b^k-1}{b-1},m+1,b,r\Big)\]
and
\[\mathrm{PTGR}(l,m,b,r)\mik\mathrm{TGR}(k_l+1,m,b,\ell,r),\]
where $k_l$ is the smallest integer $k'$ satisfying $\frac{b^{k'}-1}{b-1}\meg l$.

\subsection{Proof of Theorem \ref{point_subsets}}

The proof of Theorem \ref{point_subsets} follows by induction on $l$.
The base case for $l=0$ follows trivially by Corollary \ref{tree_HJ_cor}.

\begin{defn}
  \label{defn_sign}
  Let $b,n,l$ be positive integers, let $\Lambda$ be a finite alphabet and let $(S,g)$ be an element of
  $\w_{v,l}^*(b,n,\Lambda)$.
  Denote by $S^o$ the set of all non-maximal with respect to $\preccurlyeq$ nodes of $\mathrm{Int}(S)$.
  Set $s_*=\mathrm{max}_\preccurlyeq(\mathrm{Int}(S))$ and
  \[D=\{t\in b^{<n}:t\preccurlyeq s_*\;\text{and}\;t\neq s_*\}.\]
  Moreover, we set $g' = g \upharpoonright D$, that is the restriction of $g$ on $D$ and
  \[S'=\mathrm{Int}(S)\cup\{\mathrm{min}_\sqsubseteq((\pred_{b^{<n}}(t)\cup\{t\})\setminus D):t\in S\setminus\mathrm{Int}(S)\;\text{and}\;s_*\not\sqsubseteq t\}.\]
We call the pair $(S',g')$, the signature of $(S,g)$ and we denote it by $\mathrm{sg}(S,g)$.

We say that a pair $\mathcal{S}=(S',g')$ is an $l$-signature if there exists an element $(S,g)$ of $\w_{v,l}^*(b,n,\Lambda)$ such that $\mathrm{sg}(S,g)=\mathcal{S}$. If we have in addition that $(S,g)$ belongs to
$\w_{v,l}^*(f,\Lambda)$, where $f$ belongs to $\w_v(b,n,\Lambda)$, we say that $\mathcal{S}$ is observable by $f$ (at $s_*$, where $s_*=\max_\preccurlyeq\mathrm{Int}(S)$).

Moreover, for every $f$ in $\w_v(b,n,\Lambda)$  and every $l$-signature $\mathcal{S}$ observable by $f$, we set
\[
  [\mathcal{S},f]=\{(\tilde{S},\tilde{g}) \in \w_{v,l}^*(f,\Lambda):
  \mathrm{sg}(\tilde{S},\tilde{g})=\mathcal{S}\}.
  \]
Finally, let $m$ be a positive integer, let $f$ be an element of $\w_v(b,n,\Lambda)$ and let $t_0$ in $\ws(f)$. We denote by $\langle t_0, f \rangle_m$ the set of all $f'$ in $\w_{v,m}(f,\Lambda)$
such that $t$ belongs to $\ws(f')$ for all $t$ in $\ws(f)$ with $t\preccurlyeq t_0$.
\end{defn}

Let us observe that
if $f'$ belongs to $\langle t_0, f\rangle_m$, where $m,t_0$ and $f$ are as in Definition \ref{defn_sign}, then we have that $f \upharpoonright \{s\in b^{<n}:s\preccurlyeq t_0\}
= f' \upharpoonright \{s\in b^{<n}:s\preccurlyeq t_0\}$ and, more importantly, that the set of signatures observable by $f$ at $t_0$ coincides with the set of signatures observable by $f'$ at $t_0$.

\begin{lem}
  \label{lem_ind_step}
  Let $b,\ell,m,l,r$ be positive integers and let $m'$ be a non negative integer with $m'< m-1$.
  Then there exists a positive integer $n_0$ with the following property.
  For every finite alphabet $\Lambda$ with at most $\ell$ elements and every
  choice of $n,N,f,c,t_0$ and $\mathcal{S}$ satisfying
  \begin{enumerate}
    \item[(i)] $n$ and $N$ are integers with $N\meg n\meg n_0$,
    \item[(ii)] $f$ belongs to $\w_{v,n}(b,N,\Lambda)$,
    \item[(iii)] $t_0$ belongs to $\ws(f)(m')$,
    \item[(iv)] $\mathcal{S}$ is an $l$-signature observable by $f$ at $t_0$ and
    \item[(v)] $c$ is a simple $r$-coloring of $\w_{v,l}^*(b,N,\Lambda)$,
  \end{enumerate}
  there exists $f'$ in $\langle t_0, f \rangle_m$ such that the set $[\mathcal{S}, f']$ is monochromatic. We denote the least such $n_0$ by $h_3(l,m',m,b,\ell,r)$.
\end{lem}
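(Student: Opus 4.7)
The plan is to reduce Lemma \ref{lem_ind_step} to Corollary \ref{tree_HJ_cor} applied to the ``$\preccurlyeq$-after $t_0$'' portion of $\w(b,N,\Lambda)$. An element $(S,g)$ of $[\mathcal{S},f']$ has its signature $\mathcal{S}$ frozen, so $\mathrm{Int}(S)=\ws(g)$, the restriction $g\upharpoonright D$ with $D=\{t:t\preccurlyeq t_0,\, t\neq t_0\}$, and the portion of $S'$ lying below $t_0$ in $\preccurlyeq$ are all determined. What remains free as $(S,g)$ varies in $[\mathcal{S},f']$ is (a) the values of $g$ on the nodes $t\succcurlyeq t_0$ of $\ws(f)$, subject to $[g]_\Lambda\subseteq[f']_\Lambda$, and (b) the leaves of $S$ whose $\preccurlyeq$-entry-point lies after $t_0$, which covers both the $b$ leaves directly above $t_0$ and those sitting above the other $\sqsubseteq$-maximal nodes of $\mathrm{Int}(S)$ whose branch enters the ``after $t_0$'' region.

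First I would set up a mixed product $\w^{\mathbf{D}}(b,N',\Lambda)$ with one coordinate per free outgoing direction at the boundary of the frozen region, where $N'$ is the height remaining beyond $|t_0|$. The partition $\mathbf{D}=(D_0,D_1,D_2)$ is dictated by the split $A=B_1\sqcup B_2$ from the simplicity of $c$: leaf-positions in $B_2$ (``position matters, word below may vary'') are placed into $D_2$ (the $\bullet$-factor), leaf-positions in $B_1$ whose associated words are fully read contribute to $D_0$ (pure-word coordinates, since the leaf itself is then color-irrelevant), and the leaves directly above $t_0$ for which both the leaf choice and the word on the branch are relevant contribute the $\uparrow$-factors in $D_1$. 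In particular $d_2\leq b^{\,l}$ is bounded in terms of $b$ and $l$ only.

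Next I would define an induced coloring $\tilde c$ on $\w^{\mathbf{D}}(b,N',\Lambda)$ by $\tilde c(\mathbf{w})=c\bigl((S_{\mathbf{w}},g_{\mathbf{w}})\bigr)$, where $(S_{\mathbf{w}},g_{\mathbf{w}})$ is assembled by gluing $\mathbf{w}$ to the frozen past recorded in $\mathcal{S}$ (the restriction of $f$ to $\{t:t\preccurlyeq t_0\}$, together with the signature's boundary data); the variable symbols $v_s$ for $s\in\mathrm{Int}(S)$ that still act on the ``after $t_0$'' region are handled by recording their intended substitution targets separately, so the working alphabet stays $\Lambda$. Simplicity of $c$ translates directly into smoothness of $\tilde c$: the pair $(\Gamma_1,\Gamma_2)$ required by smoothness coincides with the $(B_1,B_2)$-induced split on the $D_2$-coordinates, and the two equivalences match by construction. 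Applying Corollary \ref{tree_HJ_cor} with target dimension $m-m'$, alphabet size $\ell$, and $r$ colors yields an $(m-m')$-dimensional generator $\bigl((f_i)_i,(X_i)_i\bigr)$ whose starred span is $\tilde c$-monochromatic, provided $N'\meg\mathrm{MTHJ}\bigl(d_0,d_1,d_2,b,\ell,\mathrm{CT}(1,m-m',b,d_2,r),r\bigr)$.

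Finally, gluing this generator with the $\preccurlyeq$-initial segment of $f$ ending at $t_0$ produces an element $f'\in\langle t_0,f\rangle_m$, and the starred $\tilde c$-monochromaticity yields that $[\mathcal{S},f']$ is $c$-monochromatic; the bound $h_3(l,m',m,b,\ell,r)$ is read off as $m'+N'$. The main obstacle is pinning down the encoding so that (i) the map $\mathbf{w}\mapsto(S_{\mathbf{w}},g_{\mathbf{w}})$ is a bijection onto $[\mathcal{S},f']$ that respects the restriction to the starred span, (ii) the simplicity-to-smoothness dictionary via $B_1\leftrightarrow\Gamma_1$ and $B_2\leftrightarrow\Gamma_2$ is verified term by term, and (iii) the leaves whose entry-point lies above $t_0$ are consistently allocated between $D_0$, $D_1$, $D_2$ regardless of how $\mathrm{I}_S$ varies; once these bookkeeping issues are settled, all the Ramsey content is packaged into Corollary \ref{tree_HJ_cor}.
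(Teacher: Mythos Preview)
Your overall architecture is the same as the paper's: freeze everything $\preccurlyeq$-before $t_0$, set up a mixed product on the branches emanating after $t_0$, transfer $c$ to a coloring of that product, observe that simplicity of $c$ yields smoothness, apply Corollary \ref{tree_HJ_cor} with target dimension $m-m'$, and glue the resulting generator back with the frozen initial segment to obtain $f'\in\langle t_0,f\rangle_m$.

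There is, however, a genuine gap at the alphabet step. For $(S,g)\in[\mathcal{S},f']$ one has $\ws(g)=\mathrm{Int}(S)$, and on a node $s$ lying $\preccurlyeq$-after $t_0$ the value $g(s)$ may be either an element of $\Lambda$ \emph{or} a variable symbol $v_t$ with $t\in\mathrm{Int}(S)$ and $t\sqsubseteq s$. Since $\mathrm{Int}(S)$ sits at heights $\leqslant m'$ in $\ws(f)$, each branch admits up to $m'+1$ such variable symbols in addition to $\Lambda$. Your sentence ``the variable symbols $v_s$ \ldots\ are handled by recording their intended substitution targets separately, so the working alphabet stays $\Lambda$'' does not describe a working mechanism: the \emph{positions} at which $g$ takes variable values are themselves part of the free data, so they cannot be recorded as a fixed side-parameter. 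The paper resolves this by enlarging the alphabet to $\Lambda'=\Lambda\cup\{0,\dots,m'\}$ (so $\ell'=\ell+m'+1$) and, on each branch $i$, choosing a surjection $\mathcal{R}_i$ from $\{0,\dots,m'\}$ onto the admissible variable symbols for that branch; the resulting bound is
\[
h_3(l,m',m,b,\ell,r)\;\leqslant\; m'+\max_{d_0+d_2\leqslant b^{m'+1}}\mathrm{MTHJ}\bigl(d_0,0,d_2,b,\ell',\mathrm{CT}(1,m-m',b,d_2,r),r\bigr).
\]
Your bound, written with alphabet size $\ell$, is too small to push the argument through.

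A secondary point: the paper takes $D_1=\emptyset$ and places \emph{all} leaf-relevant directions --- including the $b$ immediate successors of $t_0$ --- into $D_2$, with $D_0=[d]\setminus D_2$. Your allocation of the leaves above $t_0$ to the $\uparrow$-factor $D_1$ is not well suited here, since the $\uparrow$-component records a point in $b^n$ rather than a node of $b^{<n}$, and the leaves of $S$ live strictly inside $b^{<n}$; using $D_2$ throughout (as the paper does) avoids this mismatch and makes the simplicity-to-smoothness dictionary straightforward.
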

\begin{proof}
  Set $\ell'=\ell+m'+1$. We will show that
  \[h_3(l,m',m,b,\ell,r)\mik m'+\max_{d_0+d_2\mik b^{m'+1}}\mathrm{MTHJ}(d_0,0,d_2,b,\ell',\mathrm{CT}(1,m-m',b,d_2,r),r).\]
  Indeed, let $n\meg m'+\max_{d_0+d_2\mik b^{m'+1}}\mathrm{MTHJ}(d_0,0,d_2,b,\ell',\mathrm{CT}(1,m-m',b,d_2,r),r)$ and $n'=n-m'$. Also let $n,N,f,c,t_0,\Lambda$ and $\mathcal{S}=(S',g')$ be as in the statement of the lemma. %For each $t$ in $\mathrm{Int}(S)$, we set
  %\[\tilde{U}_t=\bigcup_{\substack{t'\in T\;\text{s.t.}\;t'\preccurlyeq t_0\\\text{and}\;t'\in U_t}}V_{t'}.\]

  We set $T = \ws(f)$.
  Let $d$ be the cardinality of the set consisting of the $\sqsubseteq$-minimal points of $T\setminus\{t\in T : t \preccurlyeq t_0\}$ and let $(t_i)_{i\in[d]}$ be an enumeration of the elements of this set in $\preccurlyeq$-increasing order. Let $d_2$ be the cardinality of the set $R=(S'\setminus (\mathrm{Int}(S')\cup\{t_0\}))\cup \mathrm{ImSucc}_{b^{<n}}(t_0)$.
  Observe that for every $s$ in $R$ there exists unique $i$ in $[d]$ such that $s\sqsubseteq t_i$ and for every $i$ in
  $[d]$ there exists at most one $s$ in $R$ such that $s\sqsubseteq t_i$. We set $D_2$ to be the set of all $i$ in $[d]$ such that there exists some $s$ in $R$ satisfying $s\sqsubseteq t_i$. We also set $d_0=d-d_2$, $D_0=[d]\setminus D_2$, $\mathbf{D}=(D_0,\emptyset,D_2)$ and $\Lambda'=\{0,...,m'\}\cup \Lambda$, where we may assume without loss of generality that $\{0,...,m'\}$ and $\Lambda$ are disjoint. Finally, we fix some $a$ in $\Lambda$.

  For each $i$ in $[d]$, we define $P_i:b^{<n'}\to T$ by the rule
  \[P_i(z)=I_T(I_T^{-1}(t_i)^\con z)\]
  for all $z$ in $b^{<n'}$.
  We also define a map
  \[\mathcal{Q}:\w^\mathbf{D}_*(b,n',\Lambda)\to[\mathcal{S}, f]\]
  as follows. Let $\mathbf{w}=((w_i)_{i\in[d]},\emptyset,(s_i)_{i\in D_2})$ be an element of $\w^\mathbf{D}_*(b,n',\Lambda)$. We define
  $\mathcal{Q}(\mathbf{w})=(S_\mathbf{w},g_\mathbf{w})$
  as follows. We set \[S_\mathbf{w}=\mathrm{Int}(S')\cup\{t_0\}\cup\{P_i(s_i):i\in D_2\}\]
  and observe that $\mathrm{int}(S_{\mathbf{w}})=\mathrm{Int}(S')\cup\{t_0\}$.
  Moreover, for every $i$ in $[d]$, if the set $\mathrm{Pred}_{b^<N}(t_i)\cap \mathrm{Int}(S_{\mathbf{w}})$ is non empty,
  then we pick an onto map $\mathcal{R}_i:\{0,...,m'\}\to\{v_t:t\in \mathrm{Int}(S_{\mathbf{w}}) \}$, and otherwise, we set
  $\mathrm{R}_i:\{0,...,m'\}\to\{a\}$.
  Finally, we define $g_\mathbf{w}$ as follows.
  \begin{enumerate}
    \item[(i)] For every  $s$ in $b^{<N}$ such that $f(s)\in\Lambda$, we set $g_\mathbf{w}(s) = f(s)$.
    \item[(ii)] For every $s$ in $b^{<N}$ and $t$ in $T$, such that $f(s) = v_t$, $t\preccurlyeq t_0$ and $t \neq t_0$, we set $g_\mathbf{w} (s)= g'(t)$.
    \item[(iii)] For every $s$ in $b^{<N}$ such that $f(s) = v_{t_0}$, we set $g_\mathbf{w}(s) = f(s)$.

    \item[(iv)] For every $s$ in $b^{<N}$, $i$ in $[d]$ and $z$ in $b^{<n'}$, such that $f(s) = v_{P_i(z)}$, we set $g_\mathbf{w} (s)= w_i(z)$ if $w_i(z)\in \Lambda$ and $g_\mathbf{w} (s)= \mathrm{R}(w_i(z))$ if $w_i(z)\not\in \Lambda$.

    \item[(v)] For every $s$ in $b^{<N}$ such that there is no $i$ in $[d]$ and $z\in b^{<n'}$ such that
        $f(s)\not\in\Lambda$ and $f(s) = v_{P_i(z)}$ we set $g_\mathbf{w} (s)=a$.
  \end{enumerate}
  Clearly, the map $\mathcal{Q}$ is well defined. We define a coloring $c'$ on $\w^\mathbf{D}_*(b,n',\Lambda)$ setting
  \[c'(\mathbf{w})=c(\mathcal{Q}(\mathbf{w})).\]
  Since $c$ is simple we have that $c'$ is smooth.
  By Corollary \ref{tree_HJ_cor}, there exists $\big((f_i)_{i=1}^d,\emptyset\big)$ in
  $\w_v^\mathbf{D}(b,n',\Lambda)$ generating an $(m-m')$-dimensional combinatorial subspace such that the set
  $[\big((f_i)_{i=1}^d,\emptyset\big)]^*_\Lambda$ is monochromatic.

  We set
  \[T'=\{t\in T:\;t\preccurlyeq t_0\}\cup\bigcup_{i\in[d]}\{P_i(z):z\in\ws(f_i)\;\text{and}\;
  \mathrm{h}_{\ws(f_i)}(z)+\mathrm{h}_T(t_i)< m\}.\]
  Then $T'$ is a complete skew subtree of height $m$. We define $f'$ in $\mathrm{W}_{v,m}(b,n,\Lambda)$ as follows.
  \begin{enumerate}
    \item [(i)] For every  $s$ in $b^{<N}$ such that $f(s)\in\Lambda$, we set $f'(s) = f(s)$.
    \item [(ii)] For every $s$ in $b^{<N}$ and $t$ in $T$, such that $f(s) = v_t$ and $t\preccurlyeq t_0$, we set
                 $f'(s) = f(s)$.
    \item [(iii)] For every $s$ in $b^{<N}$, $i$ in $[d]$ and $z$ in $b^{<n'}$, such that
                $f(s) = v_{P_i(z)}$ and $f_i(z)\in \Lambda$, we set $f'(s) = f_i(z)$.
    \item [(iv)] For every $s$ in $b^{<N}$, $i$ in $[d]$, $z$ in $b^{<n'}$ and $y$ in $b^{<m-m'}$, such that
                $f(s) = v_{P_i(z)}$, $f_i(z) = v_y$ and $P_i(z)\in T'$, we set $f'(s) = v_t$ where $t=  \mathrm{I}_{T'}(\mathrm{I}_T^{-1}(t_i)^\smallfrown y)$.
    \item [(v)] For every $s$ in $b^{<N}$, $i$ in $[d]$, $z$ in $b^{<n'}$ and $y$ in $b^{<m-m'}$, such that
                $f(s) = v_{P_i(z)}$, $f_i(z) = v_y$ and $P_i(z)\not\in T'$, we set $f'(s) = a$.
    \item [(vi)] For every $s$ in $b^{<N}$ such that there is no $i$ in $[d]$ and $z\in b^{<n'}$ such that
        $f(s)\not\in\Lambda$ and $f'(s) = v_{P_i(z)}$ we set $f'(s) = a$.
  \end{enumerate}
  Notice that $\mathrm{ws}(f') = T'$ and $f'\in \langle t_0 , f \rangle_m$. Moreover, notice that
  $[\mathcal{S}, F']$ is a subset of $\{\mathcal{Q}(\mathbf{w}): \mathbf{w}\in [\big((f_i)_{i=1}^d,\emptyset\big)]^*_\Lambda\}$. Hence the set $[\mathcal{S}, F']$ is monochromatic and the proof is complete.
\end{proof}

Iterating the above lemma we obtain the following.

\begin{lem}
  \label{lem_ind}
  Let $b,\ell,m,l,r$ be positive integers.
  Then there exists a positive integer $n_0$ with the following property.
  For every finite alphabet $\Lambda$ with at most $\ell$ elements and every pair of integers
  $n$ and $N$ with $N\meg n\meg n_0$, every a simple $r$-coloring $c$ of $\w_{v,l}^*(b,N,\Lambda)$
  and every $f$ in $\w_{v,n}(b,N,\Lambda)$, there exists $f'$ in $\mathrm{W}_{v,m}(b,N,\Lambda)$ with
  $[f']_\Lambda \subseteq [f]_\Lambda$ such that for every $(S,g)$ and $(S',g')$ in
  $\mathrm{W}_{v,l}^*(f')$ with $\mathrm{sg}(S,g) = \mathrm{sg}(S',g')$ we have that
  $c((S,g)) = c((S',g'))$. We denote the least such $n_0$ by $h_4(l,m,b,\ell,r)$.
\end{lem}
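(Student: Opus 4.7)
The plan is to iterate Lemma \ref{lem_ind_step} finitely many times, once for each relevant pair $(t_0,\mathcal{S})$. The total number of such pairs in an $m$-complete skew subtree is bounded by an explicit $K=K(l,m,b,\ell)$: at most $\tfrac{b^{m-1}-1}{b-1}$ nodes $t_0$ lie at levels $<m-1$, and for each such $t_0$ the number of $l$-signatures $\mathcal{S}=(S',g')$ observable at $t_0$ is bounded in terms of $l$, $b$ and $\ell$, since $S'$ has cardinality bounded by a function of $l$ and $b$ and $g'$ is $\Lambda$-valued on a domain of bounded size.

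Next, I would set up a decreasing sequence of heights $m=m_K<m_{K-1}<\dots<m_0$ by backward recursion: $m_{i-1}:=h_3(l,|\tau_i|,m_i,b,\ell,r)$, where $(\tau_i)_{i=1}^{K}\subseteq b^{<m}$ is an enumeration of the abstract positions with $\tau_1\preccurlyeq\tau_2\preccurlyeq\dots\preccurlyeq\tau_K$. Setting $n_0:=m_0$ and starting from $f_0:=f$ of height at least $n_0$, at step $i$ I would set $t_0^{(i)}:=\mathrm{I}_{\ws(f_{i-1})}(\tau_i)$, pick an as-yet-unhandled signature $\mathcal{S}^{(i)}$ of the appropriate type observable at $t_0^{(i)}$ in $f_{i-1}$, and apply Lemma \ref{lem_ind_step} to produce $f_i\in\langle t_0^{(i)},f_{i-1}\rangle_{m_i}$ with $[\mathcal{S}^{(i)},f_i]$ monochromatic.

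The critical structural observation, read off from the construction in the proof of Lemma \ref{lem_ind_step}, is that $\ws(f_i)\subseteq\ws(f_{i-1})$: the output skew subtree there is the union of $\{t\in\ws(f_{i-1}):t\preccurlyeq t_0^{(i)}\}$ with images of maps $P_j$ whose values lie in $\ws(f_{i-1})$. Combined with $[f_i]_\Lambda\subseteq[f_{i-1}]_\Lambda$, this yields $\w_{v,l}^*(f_i)\subseteq\w_{v,l}^*(f_{i-1})$, whence $[\mathcal{S}^{(j)},f_K]\subseteq[\mathcal{S}^{(j)},f_j]$ for every $j\mik K$, so the monochromaticity secured at step $j$ is inherited through the whole chain.

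The main obstacle is the bookkeeping needed to ensure that after the $K$ applications every pair occurring in $\w_{v,l}^*(f_K)$ has indeed been processed. The $\preccurlyeq$-non-decreasing enumeration, together with the clause $\langle t_0^{(i)},f_{i-1}\rangle_{m_i}$, guarantees that each earlier-chosen $t_0^{(j)}$ persists in $\ws(f_i)$ for all $i\meg j$, and hence that $\mathrm{I}_{\ws(f_K)}(\tau_i)=t_0^{(i)}$ for every $i$. Since every observable pair in $\w_{v,l}^*(f_K)$ then matches some processed $(t_0^{(i)},\mathcal{S}^{(i)})$, any two $(S,g),(S',g')\in\w_{v,l}^*(f_K)$ with $\mathrm{sg}(S,g)=\mathrm{sg}(S',g')$ lie in a common monochromatic $[\mathcal{S}^{(i)},f_K]$, so $c((S,g))=c((S',g'))$, as required.
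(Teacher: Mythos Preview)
Your proposal is correct and follows essentially the same approach as the paper: both enumerate the pairs $(t_0,\mathcal{S})$ indexed by the abstract reference tree $b^{<m}$ in $\preccurlyeq$-increasing order, define the sequence of heights by backward recursion via $h_3$, and iterate Lemma~\ref{lem_ind_step} so that each application preserves the monochromaticity secured at all earlier steps. The paper formalizes the transfer of abstract signatures to signatures observable by the current $f_{p-1}$ via an explicit map $\mathrm{Sg}(\mathcal{S},\varphi)$, whereas you accomplish the same thing through $\mathrm{I}_{\ws(f_{i-1})}$ together with the inclusion $\ws(f_i)\subseteq\ws(f_{i-1})$; both devices serve the identical bookkeeping purpose.
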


\begin{proof}
  To provide an explicit upper bound for $h_4(l,m,b,\ell,r)$ we need to define some invariants.
  Let $R$ be the number of nodes $t$ in $b^{<m}$ such that there exists an $l$-signature observable by $b^{<m}$ at $t$ and let $(t_i)_{i=1}^R$ be an enumeration of these nodes in $\preccurlyeq$-increasing order. Moreover, for every $i$ in $[R]$ we set $\mathcal{Q}_i$ to be the set of all $l$-signatures observable by $b^{<m}$ at $t_i$.
  We set $P=\sum_{i=1}^R|\mathcal{Q}_i|$. Let
\[\iota_1:[P]\to\bigcup_{i=1}^R\{i\}\times\big[|\mathcal{Q}_i|\big]\]
be the unique bijection such that for every $p'< p$ in $[P]$ we have that either $i'<i$, or $i'=i$ and $q'<q$, where $(i',q')=\iota_1(p')$ and $(i,q)=\iota_1(p)$. Moreover, for every $p$ in $[P]$, we set $i_p$ and $q_p$ to be the unique integers satisfying $\iota_1(p)=(i_p,q_p)$.
Finally, we pick a bijection $\iota_2$ from $[P]$ onto the set of all signatures observable by $b^{<m}$, such that for every $p$ in $[P]$, we have that $\iota_2(p)$ belongs to $\mathcal{Q}_{i_p}$.

  By inverse recursion we define a sequence of integers $(M_p)_{p=0}^P$ by the rule
  \[
  \left\{ \begin{array} {l} M_P = m,\\
M_p=h_3(l,|t_{i_{p+1}}|,M_{p+1},b,\ell,r).
\end{array}  \right.
\]
We will show that $h_4(l,m,b,\ell,r)\mik M_0$. Indeed, let $n$ and $N$ be integers with $N\meg n\meg M_0$, let $c$ be a simple $r$-coloring of $\w_{v,l}^*(b,N,\Lambda)$ and let $f$ be in $\w_{v,n}(b,N,\Lambda)$.

For every $\varphi$ in $\mathrm{W}_{v}(b,N,\Lambda)$ with $\mathrm{h}_{\mathrm{ws}(\varphi)}\meg m$
and every $l$-signature $\mathcal{S}$
observable by $b^{<m}$ we define an $l$-signature
$\mathrm{Sg}(\mathcal{S},\varphi)$ observable by $\varphi$ as follows. Pick $(S_0,g_0)$ in $\mathrm{W}^*_{v,l}(b,m,\Lambda)$
such that $\mathrm{sg}(S_0,g_0) = \mathcal{S}$. Pick any $(S,g)$ in $\mathrm{W}^*_{v,l}(\varphi)$ satisfying the following.
\begin{enumerate}
  \item [(i)] $S= \{\mathrm{I}_{\mathrm{ws}(\varphi)}(s):s\in S_0\}$
  \item [(ii)] For every $t$ in $b^{<N}$ and $s$ in $b^{<m}$ with $s \preccurlyeq s_*$ and $s\neq s_*$, where $s_*=\mathrm{max}_\preccurlyeq(\mathrm{Int}(S))$, such that $\varphi(t) = v_s$, we have that $g(t) = g_0(s)$.
\end{enumerate}
Set $\mathrm{Sg}(\mathcal{S},\varphi) = \mathrm{sg}(S,g)$. Notice that $\mathrm{Sg}(\mathcal{S},\varphi)$ is well defined,
i.e. independent from the particular choice of $(S,g)$ in $\mathrm{W}^*_{v,l}(\varphi)$ satisfying (i) and (ii) above,
and observable by $\varphi$ at $s_*$ which belongs to $\bigcup_{i=0}^{m-2}\mathrm{ws}(\varphi)(i)$. Conversely, also notice that  for every
$\varphi$ in $\mathrm{W}_{v}(b,N,\Lambda)$ with $\mathrm{h}_{\mathrm{ws}(\varphi)}\meg m$, and every $l$-signature $\mathcal{S}'$ observable by $\varphi$  at some $t_0$ in $\bigcup_{i=0}^{m-2}\mathrm{ws}(\varphi)(i)$, there exists an
$l$-signature $\mathcal{S}$ observable by $b^{<m}$ such that $\mathrm{Sg}(\mathcal{S},\varphi) = \mathcal{S}'$.

Set $f_0 = f$ . We define $(f_p)_{p=0}^P$ inductively  satisfying the following for every $p=1,...,P$.
\begin{enumerate}
  \item [(i)] We have that $f_p$ belongs to $\langle t_{i_p},f_{p-1}\rangle_{M_p}$.
  \item [(ii)] The set $[\iota_2(p), f_p]$ is monochromatic.
\end{enumerate}
The inductive construction is straightforward by the definition of the numbers $(M_p)_{p=0}^P$
applying Lemma \ref{lem_ind_step}. Clearly, setting $f' = f_P$, we have that $f'$ satisfies the conclusion of the lemma and its proof is complete.
\end{proof}

We are ready to proceed with the proof of Theorem \ref{point_subsets}.

\begin{proof}
  [Proof of Theorem \ref{point_subsets}]
  As we have already mentioned the proof of Theorem \ref{point_subsets} follows by induction on $l$. The base case, when $l=0$ follows by Corollary \ref{tree_HJ_cor}.
  To complete the proof, we fix positive integer $l$ and we assume that the statement holds true for $l-1$.
  We also fix positive integers $m,b$ and $r$. We will show that
  \[\mathrm{PTGR}(l,m,b,r)\mik
  h_4(l,\mathrm{PTGR}(l-1,m,b,r)+1,b,r).\]
  Indeed, set $M_1=\mathrm{PTGR}(l-1,m,b,r)$ and $M_2=h_4(l,M_1+1,b,r)$ and pick any integer $n$ with $n\meg M_2$. By Lemma \ref{lem_ind},
  there exists $f_1$ in $\mathrm{W}_{v,M_1+1}(b,n,\Lambda)$
  such that every $(S,g)$ and $(S',g')$ in
  $\mathrm{W}_{v,l}^*(f_1)$ with the same signature have the same color. Let $\tilde{c}$ be the induced coloring
  on the $l$-signatures observable by $f_1$. Pick any $\alpha$ in $\Lambda$ and let $f_2$ be the element of $\mathrm{W}_{v,M_1}(b,n,\Lambda)$ such that $[f_2]_\Lambda \subseteq [f_1]_\Lambda$ and for every $t\in b^{<n}$ and
  $s\in\mathrm{ws}(f_1)(M_1)$ with $f_1(t) = v _s$ we have that $f_2(t)=\alpha$. Actually, the latter step of passing from
  $f_1$ to $f_2$ is necessary only in the case that $|S(\mathrm{h}_S-1)|=1$, where $S$ consists of the first $l$ elements of $b^{<n}$ with respect to $\preccurlyeq$, in order for the map $\mathcal{Q}$ introduced below to be well defined.

  Next we transfer $\tilde{c}$ to $\w_{v,l-1}^*(f_2)$. More precisely, we define a map $\mathcal{Q}$ from $\w_{v,l-1}^*(f_2)$ to the set of $l$-signatures observable by $f_1$ as follows.
  Fix an element $(\tilde{S}, \tilde{g})$  of $\w_{v,l-1}^*(f_1)$. Set  $s_*=\mathrm{min}_\preccurlyeq(\tilde{S}\setminus\mathrm{Int}(\tilde{S}))$ and
  \[D=\{t\in b^{<n}:t\preccurlyeq s_* \text{ and }t\neq s_*\}.\]
  Moreover, set $g = \tilde{g} \upharpoonright D$, that is the restriction of $\tilde{g}$ on $D$, and
  \[S
  =\mathrm{Int}(\tilde{S})\cup\{s_*\}\cup\{\mathrm{min}_\sqsubseteq((\pred_{b^{<n}}(t)\cup\{t\})\setminus D):t\in \tilde{S}\setminus\mathrm{Int}(\tilde{S})%\;\text{and}\;s_*\not\sqsubseteq t
  \}.\]
  Finally, set $\mathcal{Q}((\tilde{S}, \tilde{g}))=(S,g)$.
  It is easy to see that the image of $\mathcal{Q}$ is onto the $l$-signatures observable by $f_1$ and therefore covers
  the set of $l$-signatures observable by $f_2$. Moreover, observe that for every $f$ in
  $\mathrm{W}_{v}(b,n,\Lambda)$ such that $[f]_\Lambda \subseteq [f_2]_\Lambda$ we gave that
  the image of $\w_{v,l-1}^*(f)$ through $\mathcal{Q}$ covers the set $l$-signatures observable by $f$.
  We define an $r$-coloring $c'$ on $\w_{v,l-1}^*(f_2)$ by the rule $c'(\mathcal{S})=\tilde{c}(\mathcal{Q}(\mathcal{S}))$
  for every $\mathcal{S}$ in $\w_{v,l-1}^*(f_2)$.

  By the choice of $M_1$, making use of the inductive assumption, we obtain $f$ in $\w_{v,m}(b,n,\Lambda)$
  with $[f]_\Lambda \subseteq [f_2]_\Lambda$ such that the set $\w_{v,l-1}^*(f)$ is $c'$-monochromatic.
  By the definition of the color $c'$ it follows easily that $f$ satisfies the conclusion of Theorem \ref{point_subsets}.
\end{proof}

\section{Generalization to product of words} Actually the arguments in this paper can easily be modified and yield a more general form of Theorem \ref{tree_GR}. To state it we need first to introduce some pieces of notation.
Let $b,m,n$ be positive integers and $\Lambda$ a finite alphabet. We denote by $\w^d_{v,m}(b,n,\Lambda)$ the set of all variable words of $\w^d(b,n,\Lambda)$ that generate a combinatorial subspace of dimension $m$.
Moreover, if $f$ is a variable word of $\w^d(b,n,\Lambda)$, then we denote by $\w^d_{v,m}(f)$ the set of all elements $g$ in
$\w^d_{v,m}(b,n,\Lambda)$ such that $[g]_{\Lambda}\subseteq[f]_{\Lambda}$.

\begin{thm}
  \label{product_tree_GR}
  For every choice of positive integers $d,b,\ell, k,m$ and $r$ there exists a positive integer $n_0$ with the following property. For every integer $n$ with $n\meg n_0$, every finite alphabet $\Lambda$ with $\ell$ elements and every $r$-coloring of $\w^d_{v,k}(b,n,\Lambda)$ there exists $f$ in $\w^d_{v,m}(b,n,\Lambda)$ such that the set $\w^d_{v,k}(f)$ is monochromatic.
\end{thm}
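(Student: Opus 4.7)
The plan is to follow the strategy used to prove Theorem \ref{tree_GR} via Theorem \ref{point_subsets}, but working with $d$-tuples of semi-complete skew subtrees. First I would introduce, for a vector $\mathbf{l}=(l_1,\ldots,l_d)$ of non-negative integers, the vector analog $\w^{d,*}_{v,\mathbf{l}}(b,n,\Lambda)$ consisting of $d$-tuples $((S_i,g_i))_{i=1}^d$ with each $(S_i,g_i)\in \w_{v,l_i}^*(b,n,\Lambda)$ and with $(\mathrm{Int}(S_i))_{i=1}^d$, together with the leaves of each $S_i$, arranged so as to satisfy the level-coordination conditions of the vector-complete-skew-subtree notion of Section 3. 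The coordinate-wise extensions of the signature $\mathrm{sg}$ and of the notion of simple coloring then make sense, and Theorem \ref{product_tree_GR} reduces to a simple-coloring Ramsey statement for $\w^{d,*}_{v,\mathbf{l}}$, in the same manner that Theorem \ref{tree_GR} reduces to Theorem \ref{point_subsets}, by taking each $l_i$ to be $\tfrac{b^{k}-1}{b-1}$.

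The argument then proceeds by outer induction on $|\mathbf{l}|=\sum_{i=1}^d l_i$. The base case $\mathbf{l}=\mathbf{0}$ is handled directly by Corollary \ref{tree_HJ_cor} with the partition $\mathbf{D}=([d],\emptyset,\emptyset)$, since Theorem \ref{tree_HJ} has from the outset been formulated in the mixed-product setting and thus natively produces a $d$-tuple variable word with monochromatic span. For the inductive step I would establish vector analogs of Lemmas \ref{lem_ind_step} and \ref{lem_ind}: fix a coordinate $j\in[d]$ and a node $t_0\in \ws(f_j)$, and refine $f_j$ above $t_0$ together with the other components $f_i$, $i\neq j$, by applying Corollary \ref{tree_HJ_cor} with a partition $\mathbf{D}$ in which the immediate successors of $t_0$ appearing in the current vector signature contribute to $D_2$, the components $i\neq j$ contribute to $D_0$, and the remaining successors form the free block. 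Simplicity of the ambient coloring translates into smoothness of the induced coloring on $\w^{\mathbf{D}}$, so Corollary \ref{tree_HJ_cor} yields a combinatorial subspace monochromatic for the fixed signature; iterating over all triples $(j,t_0,\mathcal{S})$ in a $\preccurlyeq$-compatible order that respects the vector-complete-skew-subtree condition across the $d$ coordinates yields a vector variable word whose coloring factors through the vector signature. Theorem \ref{thm:RamseyCompleteSkewSubtrees} is then invoked to homogenize the coloring across the remaining choice of signature-data.

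Finally, the reduction $\mathbf{l}\mapsto \mathbf{l}-e_j$ is carried out exactly as in the proof of Theorem \ref{point_subsets}: pick any coordinate $j$ with $l_j\meg 1$, apply the vector analog of Lemma \ref{lem_ind} so that the coloring depends only on the vector signature, and transfer the coloring to $\w^{d,*}_{v,\mathbf{l}-e_j}$ via the analog of the map $\mathcal{Q}$, which completes an $(\mathbf{l}-e_j)$-object to an $\mathbf{l}$-signature by recording the newly added node in the $j$-th coordinate together with the values of $g_j$ on the initial segment below it. The main technical obstacle I expect is the combinatorial bookkeeping required to ensure that the vector-complete-skew-subtree structure is preserved under each coordinate-wise refinement, and that the simple-coloring condition in the $d$-fold product is stable under the $\mathcal{Q}$-reduction; this is precisely the reason that one must appeal to the mixed-product form of Theorem \ref{tree_HJ} rather than applying the scalar Theorem \ref{tree_GR} coordinate by coordinate. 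Beyond this, the combinatorial input needed is already furnished by Theorems \ref{tree_HJ} and \ref{thm:RamseyCompleteSkewSubtrees}.
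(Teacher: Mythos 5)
Your proposal is correct and follows the paper's intended route: the paper offers no explicit argument for Theorem \ref{product_tree_GR}, asserting only that the foregoing arguments modify to the product setting, and your plan---vector semi-complete skew subtrees, coordinate-wise signatures and simple colorings, and an $|\mathbf{l}|$-descent using vector analogs of Lemmas \ref{lem_ind_step} and \ref{lem_ind}---is precisely the modification envisioned, since Theorem \ref{tree_HJ} and Corollary \ref{tree_HJ_cor} are already formulated for mixed products of $d$ factors. The one small misplacement is your invocation of Theorem \ref{thm:RamseyCompleteSkewSubtrees} immediately after the factoring step: in the scalar proof of Theorem \ref{point_subsets} the homogenization over signature position enters only through the base case $l=0$ (via Corollary \ref{tree_HJ_cor}, whose bound uses $\mathrm{CT}$), not directly after the $\mathcal{Q}$-reduction, but this does not affect the soundness of your plan.
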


\section{Appendix: Proof of Theorem \ref{Hales_Jewett*}}
This appendix is devoted to the proof of Theorem \ref{Hales_Jewett*}. Although the proof is a straightforward modification of Shelah's proof \cite{Sh} for the Hales--Jewett Theorem, we include the argument for the sake of completeness. Let us start with the following variation of Shelah's notion of insensitivity.

\begin{defn}\label{defn:*insensitivity}
  Let $\kappa,m,N$ be positive integers with $\kappa\mik m\mik N$ and $\Lambda$ a finite alphabet. Also let $w(v_0,...,v_{m-1})$ be an $m$-dimensional variable word over $\Lambda$ of length $N$ and $X$ the $\kappa^*$combinatorial subspace generated by $w$. Finally, let $L$ be a nonempty subset of $\Lambda$.
  We say that a finite coloring $c$ of $\Lambda^N\times\{0,...,N-1\}^{(\kappa)}$ is $L^*$insensitive in $X$ if
  \[c(w(a_0,...,a_{m-1}),\{\ell_i^w:i\in F\})=c(w(a_0',...,a_{m-1}'),\{\ell_i^w:i\in F\})\]
  for every choice of $F$ in $\{0,...,m-1\}^{(\kappa)}$ and $a_0,...,a_{m-1},a_0',...,a_{m-1}'$ in $\Lambda$
  satisfying the following:
  \begin{enumerate}
    \item[(i)] $a_i=a_i'$ for all $i$ in $F$,
    \item[(ii)] $a_j=a_j'$ for all $j$ in $\{0,...,m-1\}$ such that $a_j\in\Lambda\setminus L$ and
    \item[(iii)] $a_j=a_j'$ for all $j$ in $\{0,...,m-1\}$ such that $a_j'\in\Lambda\setminus L$.
  \end{enumerate}
\end{defn}
Let us first isolate the following observations on the notion of $^*$insensitivity.
\begin{rem}\label{rem_ins_sub}
  Let $\kappa,m,N$ and $\Lambda$ be as in Definition \ref{defn:*insensitivity}. Also, let $c$ be a coloring of
  $\Lambda^N\times\{0,...,N-1\}^{(\kappa)}$ and let $X$ be a $\kappa^*$combinatorial subspace. Then we have the following.
  \begin{enumerate}
    \item [(i)] If $L$ is a nonempty subset of $\Lambda$ and the coloring $c$ is $L^*$insensitive in $X$, then for every further $\kappa^*$combinatorial subspace $Y$ of $X$ (that is, $Y$ is a $\kappa^*$combinatorial subspace satisfying $Y\subseteq X$) we have that $c$ is $L^*$insensitive in $Y$.
    \item [(ii)] If $L_1$ and $L_2$ are nonempty subsets of $\Lambda$ such that the coloring $c$ is both $L_1^*$insensitive and $L_2^*$insensitive in $X$, then $c$ is $L_1\cup L_2^*$insensitive.
  \end{enumerate}
\end{rem}

Let us also introduce some pieces of notation.

\begin{notation}
  Let $N,M,m,\kappa$ be positive integers and $\Lambda$ be a finite alphabet. Also, let $X$ be an $M$-dimensional $\kappa^*$combinatorial subspace of $\Lambda^N\times\{0,...,N-1\}^{(\kappa)}$ and let $Y$ be an $m$-dimensional $\kappa^*$combinatorial subspace of $\Lambda^M\times\{0,...,M-1\}^{(\kappa)}$. We denote by $X[Y]$ the
  $m$-dimensional $\kappa^*$combinatorial subspace of $\Lambda^N\times\{0,...,N-1\}^{(\kappa)}$ generated by the
  $m$-dimensional variable word $w(v_0,...,v_{m-1})$ of length $N$ over $\Lambda$ defined as follows.
  Let $w_X(v_0,...,v_{M-1})$ be the $M$-dimensional variable word of length $N$ over $\Lambda$ that generates $X$ and
  let $w_Y(v_0,...,v_{m-1})$ be the $m$-dimensional variable word of length $M$ over $\Lambda$ that generates $Y$.
  Write $w_Y(v_0,...,v_{m-1}) = (z_i)_{i=0}^{M-1}$ and set $w(v_0,...,v_{m-1}) = w_X(z_0,...,z_{M-1})$.
\end{notation}

\begin{notation}
  Let $N,m,\kappa$ be a positive integers and let $\Lambda$ be a finite alphabet. Also, let $0\mik q_0<q_1<...<q_m\mik N$ be integers and let $X$ be an $m$-dimensional $\kappa^*$combinatorial subspace. We say that $X$ is $(q_i)_{i=0}^m$-compatible
  if for every $i=0,...,m-1$, we have that the wildcard set of $v_i$ in $w$ is contained in $[q_i,q_{i+1})$, where
   $w(v_0,...,v_{m-1})$ is the $m$-dimensional variable word that generates $X$.
\end{notation}
We isolate the following remark concerning composition of subspaces.
\begin{rem}\label{rem_compat_compos}
  Let $N,M,\kappa$ be positive integers and $\Lambda$ be a finite alphabet. Also, let $0\mik q_0<q_1<...<q_M\mik N$ and
  $0\mik p_0<p_1<...<p_m\mik M$ be integers.
  Finally, let $X$ be an $M$-dimensional $\kappa^*$combinatorial $(q_i)_{i=0}^M$-compatible subspace of $\Lambda^N\times\{0,...,N-1\}^{(\kappa)}$ and let $Y$ be an $m$-dimensional $\kappa^*$combinatorial $(p_i)_{i=0}^m$-compatible subspace of $\Lambda^M\times\{0,...,M-1\}^{(\kappa)}$. Then $X[Y]$ is an $m$-dimensional $\kappa^*$combinatorial $(q_{p_i})_{i=0}^m$-compatible subspace of $\Lambda^N\times\{0,...,N-1\}^{(\kappa)}$.
\end{rem}

We have the following variation of Shelah's insensitivity lemma.

\begin{lem}
  \label{Shelah_ins*}
  For every choice $k,\kappa,m,r$ of positive integers with $\kappa\mik m$  and $k\meg2$ there exist
  integers $n_0, q_0,...,q_m$ with $0=q_0<q_1<...<q_m=n_0$ satisfying the following. For every integer $N$ with $N\meg n_0$, every finite alphabet $\Lambda$ with $|\Lambda|=k$, every subset $L$ of $\Lambda$ with $|L|= 2$ and every $r$-coloring $c$ of $\Lambda^N\times\{0,...,N-1\}^{(\kappa)}$ there exists an $m$-dimensional $\kappa^*$combinatorial
  $(q_i)_{i=0}^m$-compatible subspace $X$ such that $c$ is $L^*$insensitive in $X$.
  We denote the least such $n_0$ by $\mathrm{Sh}^*(k,\kappa,m,r)$ and for every $j=0,...,m$ we denote the integer $q_j$ by
  $q_{\mathrm{Sh}^*}(j,k,\kappa,m,r)$.

  Moreover, the numbers $\mathrm{Sh}^*(k,\kappa,m,r)$ are upper bounded by a primitive recursive function belonging to the class $\mathcal{E}^4$ of Grzegorczyk's hierarchy.
\end{lem}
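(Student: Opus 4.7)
My plan is to prove Lemma \ref{Shelah_ins*} by a direct construction whose only Ramsey-theoretic input is Hales--Jewett for the 2-letter alphabet $L = \{a, b\}$. The key sublemma reads: for every positive integer $R$ and every $R$-coloring $c'$ of $\Lambda^\ell$ with $\ell \geq \mathrm{HJ}(2, R)$, there exists a variable word $w(v)$ of length $\ell$ with constants drawn from $L$ such that $c'(w(a)) = c'(w(b))$. This follows by restricting $c'$ to $L^\ell \subseteq \Lambda^\ell$ and applying Hales--Jewett for the alphabet $L$; the bound $\mathrm{HJ}(2, R) \leq R+1$ is immediate by pigeonhole on the $\ell + 1$ words $a^i b^{\ell-i}$, so the sublemma is elementary and lies already in $\mathcal{E}^2$.

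The construction produces the generating variable word as a concatenation $w_0 \,|\, w_1 \,|\, \cdots \,|\, w_{m-1}$, where $w_j$ is a variable word on the interval $[q_j, q_{j+1})$ whose wildcard is $v_j$. The integers $0 = q_0 < q_1 < \cdots < q_m = n_0$ are fixed in advance, and the pieces are chosen in right-to-left order $w_{m-1}, w_{m-2}, \ldots, w_0$. At step $j$, with $w_{j+1}, \ldots, w_{m-1}$ already built, I would enumerate every \emph{context} that could affect an insensitivity condition at $v_j$, namely: a choice of $F \in \{0, \ldots, m-1\}^{(\kappa)}$ with $j \notin F$; a fixing $(a_{j+1}, \ldots, a_{m-1}) \in \Lambda^{m-1-j}$ of the already-built variables; a hypothetical constant word $y \in \Lambda^{q_j}$ for the not-yet-built left portion; and, for each $i \in F$ with $i < j$, a hypothetical value of $\ell^w_i \in [q_i, q_{i+1})$. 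Collecting the induced colorings indexed by these contexts yields an $R_j$-coloring $c^{(j)}$ of $\Lambda^{[q_j, q_{j+1})}$ with $R_j \leq r^{K_j}$ and $K_j = \binom{m-1}{\kappa} \cdot k^{q_j + m-1-j} \cdot \prod_{i < j}(q_{i+1} - q_i)$. Provided $q_{j+1} - q_j \geq \mathrm{HJ}(2, R_j)$, the sublemma produces $w_j$ with constants in $L$ satisfying $c^{(j)}(w_j(a)) = c^{(j)}(w_j(b))$ uniformly in the context.

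Setting $q_{j+1} = q_j + \mathrm{HJ}(2, R_j)$ iteratively: since $\mathrm{HJ}(2, R) \leq R+1$ is linear while $R_j$ is doubly exponential in $q_j$, the sequence $q_j$ grows as a tower of exponentials of height proportional to $m$, so $n_0 = q_m$ is primitive recursive and lies in $\mathcal{E}^4$. Once all pieces are built, $L^*$-insensitivity in the resulting $\kappa^*$-combinatorial $(q_i)_{i=0}^m$-compatible subspace follows because, for each $F$ and each $j \notin F$, the single-variable swap $v_j \colon a \leftrightarrow b$ preserves color by the step-$j$ property (the realized left filling $w_0(a_0) | \cdots | w_{j-1}(a_{j-1})$ and the realized values of $\ell^w_i$ for $i < j$ lie among the contexts enumerated at that step), and composing such swaps at all indices $j \notin F$ where $a_j \neq a_j'$ yields the full condition of Definition \ref{defn:*insensitivity}. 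The main obstacle I anticipate is the careful verification that the enumeration of contexts at step $j$ really covers every datum that only materializes once the remaining pieces $w_0, \ldots, w_{j-1}$ are built: specifically, each eventual value of $\ell^w_i$ for $i \in F$ with $i < j$ must lie in the a priori known interval $[q_i, q_{i+1})$, and the realized left word must be a constant word in $\Lambda^{q_j}$, both of which are ensured by design but require tracking through the definitions.
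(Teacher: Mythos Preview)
Your approach is correct and is essentially Shelah's argument, which is exactly what the paper does. The only visible difference is cosmetic: the paper iterates left-to-right, maintaining at step $i$ an intermediate $d_i$-dimensional subspace $X_i$ whose first $i$ variables are the final ones and whose remaining $n_0-q_i$ variables are raw single positions, whereas you iterate right-to-left, building the pieces $w_{m-1},\ldots,w_0$ and treating the not-yet-built left portion as a hypothetical constant word together with hypothetical values of the $\ell^w_i$. In both versions the key step at each coordinate is the same pigeonhole on the $p+1$ words $a^t b^{\,p-t}$ (your ``sublemma'' $\mathrm{HJ}(2,R)\le R+1$ is precisely this), and the resulting bounds are of the same tower shape, landing in $\mathcal{E}^4$. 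Your anticipated obstacle --- that the realized left filling and the realized $\ell^w_i$ for $i<j$ lie among the enumerated contexts --- is handled exactly as you say, since by design $w_i$ lives on $[q_i,q_{i+1})$; the paper avoids this bookkeeping only because its ``unfinished'' side consists of raw one-position variables, for which the $\ell$'s are literal positions.
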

Before we proceed to the proof of Lemma \ref{Shelah_ins*} let us define a function $f_1:\nn^5\to\nn$ by the following rule. For every choice of positive integers $k,\kappa,m,r$ with $\kappa\mik m$ we recursively define
\[
\left\{ \begin{array} {l} f_1(k,\kappa,0,m,r)=0,\\
                               f_1(k,\kappa,i+1,m,r)=f_1(k,\kappa,i,m,r)+r^{(m-i-1+f_1(k,\kappa,i,m,r))^\kappa\cdot %(k+1)^{m-i-1+f_1(k,\kappa,i,m,r)}}\end{array}  \right.
                               k^{m-i-1+f_1(k,\kappa,i,m,r)}}\end{array}  \right.
\]
and we set $f_1(k,\kappa,i,m,r)=0$ if at least one of the integers $k,\kappa,m,r$ is equal to zero or $m<\kappa$.
Observe that $f_1$ belongs to the class $\mathcal{E}^4$ of Grzegorczyk's hierarchy.

\begin{proof}
  [Proof of Lemma \ref{Shelah_ins*}]
  Let $k,\kappa,m,r$ be positive integers with $\kappa\mik m$.
  We will show the following inequality
  \begin{equation}
    \label{eq01}
    \mathrm{Sh}^*(k,\kappa,m,r)\mik f_1(k,\kappa,m,m,r).
  \end{equation}
  First, let us set $n_0=f_1(k,\kappa,m,m,r)$ and
  \begin{equation}
    \label{eq02}
    q_i=n_0-f_1(k,\kappa,m-i,m,r)
  \end{equation}
  for all $i=0,...,m$. Observe that $0=q_0<q_1<...<q_m=n_0$ as desired. Moreover, let us set
  \begin{equation}
    \label{eq03}
    p_i=q_i-q_{i-1}=r^{(i-1+f_1(k,\kappa,m-i,m,r))^\kappa\cdot k^{i-1+f_1(k,\kappa,m-i,m,r)}}
  \end{equation}
  $p_i=q_i-q_{i-1}$ for all $i=1,...,m$.

  Pick any integer $N$ with $N\meg n_0$, as well as, $\Lambda, L$ and $c$ as in the lemma.
  Let $a,b$ in $\Lambda$ satisfying $L = \{a,b\}$.
  We set $w_0(v_0,...,v_{n_0-1})=(v_0,...,v_{n_0-1})^\con\mathbf{x}$, where $\mathbf{x}$ is a constant word over $\Lambda$ of length $N-n_0$. Let $X_0$ be the $\kappa^*$combinatorial subspace generated by $w_0$. We inductively construct a decreasing sequence $(X_i)_{i=0}^m$ of $\kappa^*$combinatorial subspaces satisfying, for every $i=0,...,m$, the following.
  \begin{enumerate}
    \item[(i)] $X_i$ is of dimension $d_i=i+n_0-q_i$. Let $w_i(v_0,...,v_{d_i-1})$ be the $d_i$-dimensional variable word that generates $X_i$.
    \item[(ii)] If $i>0$, then $w_i\upharpoonright q_{i-1}=w_{i-1}\upharpoonright q_{i-1}$.
    \item[(iii)] If $i>0$, then we have that the wildcard set of $v_i$ in $w_i$ is contained in the interval $[q_{i-1},q_i)$.
    \item[(iv)] If $i>0$, then for every $F$ in $\{0,...,d_i-1\}^{(\kappa)}$ with $i\not\in F$ and every $a_0,...,a_{d_i-1}$ in $\Lambda$ with $a_i=a$, setting $a'_{j}=a_{j}$ for all $j\neq i$ and $a'_i=b$, we have that
        \[c(w_i(a_0,...,a_{d_i-1}),\{\ell_j^{w_i}:j\in F\})=c(w_i(a'_0,...,a'_{d_i-1}),\{\ell_j^{w_i}:j\in F\}).\]
  \end{enumerate}
  The inductive step of the construction is as follows. Assume that for some $i$ in $\{1,...,m\}$
  the subspaces $X_0,...,X_{i-1}$ have been constructed properly.
  By our inductive assumption (i), we have that $X_{i-1}$ is $d_{i-1}$-dimensional.
%  For every $j=0,...,d_{i-1}-1$, we set $\ell_j=\min\supp_{w_{i-1}}(v_j)$, where $w_{i-1}(v_0,...,v_{d_{i-1}-1})$ is the $d_{i-1}$-dimensional variable word generating  $X_{i-1}$.
We set
   \[J=\{0,...,d_{i-1}-1\}\setminus[i-1,i-1+p_i).\]
   Also, we set $\mathcal{X}$ to be the set of all maps defined on $\Lambda^J\times J^{(\kappa)}$ and taking values in the set $\{1,...,r\}$. Let us observe that the cardinality of $J$ equals to $i-1+f_1(k,\kappa,m-i-1,m,r)$ and therefore
   \begin{equation}
     \label{eq04}
     |\mathcal{X}|\mik r^{(i-1+f_1(k,\kappa,m-i-1,m,r))^\kappa k^{i-1+f_1(k,\kappa,m-i-1,m,r)}}\stackrel{\eqref{eq03}}{=}p_i.
   \end{equation}

   We set $S$ to be the $d_{i-1}$-dimensional combinatorial subspace generated by $w_{i-1}$ and we define
   a map $Q:\{a,b\}^{p_i}\times\Lambda^{J}\to S$ as follows.
   For every $\mathbf{a}=(a_t)_{t=0}^{p_i-1}$ in $\{a,b\}^{p_i}$ and $\mathbf{b}=(b_s)_{s\in J}$ in $\Lambda^J$ we set
   \[Q(\mathbf{a},\mathbf{b})=w_{i-1}(b_0,...,b_{i-2},a_0,...,a_{p_i-1},b_{i-1+p_i},...,b_{d_{i-1}-1}).\]
   Moreover, for every $\mathbf{a}$ in $\{a,b\}^{p_i}$ we define $g_{\mathbf{a}}$ in $\mathcal{X}$ as follows. For every $(\mathbf{b},F)$ in $\Lambda^J\times J^{(\kappa)}$ we set $q_\mathbf{a}(\mathbf{b},F)=c(Q(\mathbf{a},\mathbf{b}),\{\ell_j^{w_{i-1}}:j\in F\})$.
   We consider the sequence $(\mathbf{a}_t)_{t=0}^{p_i}$ in $\{a,b\}^{p_i}$ defined by the rule
   \[
\mathbf{a}_t=(\underbrace{a,...,a}_{t-\mathrm{times}},  \underbrace{b,...,b}_{(p_i-t)-\mathrm{times}}).
\]
By \eqref{eq04} and the pigeonhole principle, there exist $s_1<s_2$ in $\{0,...,p_i\}$ such that $q_{\mathbf{a}_{s_1}}=q_{\mathbf{a}_{s_2}}$. Let $\mathbf{a}_{s_1}=(a^1_t)_{t=0}^{p_i-1}$ and
$\mathbf{a}_{s_2}=(a^2_t)_{t=0}^{p_i-1}$ and observe that there exists unique $d_i$-dimensional variable word $w_i$ of length $N$ over $\Lambda$ such that
\[\big\{w_i(b_0,...,b_{i-2},x,b_{i-1+p_i},...,b_{d_{i-1}-1}):(b_t)_{t\in J}\in \Lambda^J\;\text{and}\;x\in\{a,b\}\big\}\]
is equal to the set $\{Q(\mathbf{a}_{s_{t}},\mathbf{b}):t\in\{1,2\}\;\text{and}\;\mathbf{b}\in\Lambda^J\}$.
Setting $X_i$ to be the $\kappa^*$combinatorial subspace generated by $w_i$, it follows readily that $X_i$ is as desired and the inductive step of the construction is complete.

Set $X$ to be the $\kappa^*$combinatorial subspace $X_m$. It follows readily that $X$ is as desired. The proof of inequality \eqref{eq01} is complete. Since $f_1$ belongs to the class $\mathcal{E}^4$ of Grzegorczyk's hierarchy, the proof of the lemma is complete.
\end{proof}

Define a function $f_2:\nn^5\to\nn$ by the following rule. For every choice of positive integers $k,\kappa,m,r$ with $\kappa\mik m$ we recursively define
\[
\left\{ \begin{array} {l} f_2(0,k,\kappa,m,r)=0,\\
                            f_2(1,k,\kappa,m,r)=m,\\
                            f_2(i+1,k,\kappa,m,r)=f_1(k,\kappa,f_2(i,k,\kappa,m,r), f_2(i,k,\kappa,m,r),r)
                               \end{array}  \right.
\]
and we set $f_1(k,\kappa,i,m,r)=0$ if at least one of the integers $k,m,r$ is equal to zero or $m<\kappa$.
Observe that $f_2$ belongs to the class $\mathcal{E}^5$ of Grzegorczyk's hierarchy. Moreover, by \eqref{eq01}, for every choice of positive integers $i,k,\kappa,m,r$ with $\kappa\mik m$ we have that
\begin{equation}\label{eq13}
  \mathrm{SH}^*(k,\kappa,f_2(i,k,\kappa,m,r), r)\mik f_2(i+1,k,\kappa,m,r) .
\end{equation}
We are ready for the proof of Theorem \ref{Hales_Jewett*}.

\begin{proof}
  [Proof of Theorem \ref{Hales_Jewett*}]
  Let $k,\kappa,m$ and $r$ be positive integers with $\kappa\mik m$. For $k=1$  the statement of the theorem is trivial and
  for $k=2$, the result follows by Lemma \ref{Shelah_ins*}.
  Assume that $k\meg 3$. Set $n_0 =  f_2(k,k,\kappa,m,r)$.
  Moreover, we define the sequence $(q_i)_{i=0}^m$ as follows.
  For every $j=1,...,k$ set $m_j = f_2(k+1-j,k,\kappa,m,r)$. Then, by \eqref{eq01}, for every $j=2,...,k$, we have that
  \[0 = q_{\mathrm{Sh}^*}(0,k,\kappa,m_{j},r)
  <q_{\mathrm{Sh}^*}(1,k,\kappa,m_{j},r)<...<q_{\mathrm{Sh}^*}(m_{j},k,\kappa,m_{j},r)
  \mik m_{j-1}.\]
  By induction define for every $j=1,...,k$ a sequence of integers  $(q(i,j))_{i=0}^{m_j}$ as follows.
  Set $q(i,1)=i$ for all $i=0,...,m_1$ and for every $j=1,...,k-1$ set $q(i,j+1)
  = q( q_{\mathrm{Sh}^*}(i,k,\kappa,m_{j+1},r), j )$ for all $i=0,...,m_{j+1}$. Notice that $m_k = m$.
  Set $q_i = q(i, k)$ for all $i = 0,...,m$.

  To complete the proof we show that $n_0$ and $(q_i)_{i=0}^m$ satisfy the conclusion of the theorem.
  By the definition of these quantities, we have that $0 = q_0<q_1<...<q_m\mik n_0$.
  Let $N$ be an integer with $N\meg n_0$, let $\Lambda$ be a finite alphabet with $k$ elements and let $c$ be an $r$-coloring of $\Lambda^N\times\{0,...,N-1\}^{(\kappa)}$.
  Write $\Lambda = \{a_1,...,a_k\}$ and set $L_j = \{a_j,a_{j+1}\}$ for all $j=1,...,k-1$.
  Inductively we construct a decreasing sequence $(X_j)_{j=1}^k$ of $\kappa^*$combinatorial subspaces of
  $\Lambda^N\times\{0,...,N-1\}^{(\kappa)}$ such that for every $j=1,...,k$ the following are satisfied.
  \begin{enumerate}
    \item [(i)] The subspace $X_j$ is $m_j$-dimensional and $(q(i,j))_{i=0}^{m_j}$-compatible.
    \item [(ii)] If $j>1$ then the coloring $c$ is $L_{j-1}^*$insensitive in $X_j$.
  \end{enumerate}
  We set $w_1(v_0,...,v_{n_0-1})=(v_0,...,v_{n_0-1})^\con\mathbf{x}$, where $\mathbf{x}$ is a constant word over $\Lambda$ of length $N-n_0$. Let $X_1$ be the $\kappa^*$combinatorial subspace generated by $w_1$. Assume that for some $j\in\{1,...,k-1\}$ we have constructed the properly the subspaces $X_1,...,X_j$. We describe the construction of
  $X_{j+1}$. Let $w_j(v_0,...,v_{m_j-1})$ be the $m_j$-variable word generating $X_j$.
  Define an $r$-coloring $\tilde{c}$ of $\Lambda^{m_j}\times\{0,...,m_j-1\}^{(\kappa)}$
  setting for every $(\mathbf{a},F)$ in $\Lambda^{m_j}\times\{0,...,m_j-1\}^{(\kappa)}$
  \[\tilde{c}(\mathbf{a},F) = c(w_j(a_0,...,a_{m_j-1}),\{\ell^{w_j}_i: i\in F\}),\]
  where $\mathbf{a}= (a_i)_{i=0}^{m_j-1}$. By Lemma \ref{Shelah_ins*},
  there exists an $m_{j+1}$-dimensional $\kappa^*$combinatorial
  $(q_{\mathrm{Sh}^*}(i,k,\kappa,m_{j+1},r))_{i=0}^{m_{j+1}}$-compatible subspace $Y$ such that $\tilde{c}$ is $L_j^*$insensitive in $Y$. Set $X_{j+1} = X_j[Y]$. Then $X_{j+1}$ is an $m_{j+1}$-dimensional further $\kappa^*$subspace of $X_j$ and $c$ is
  $L_j^*$insensitive in $X_{j+1}$. Moreover, by Remark \ref{rem_compat_compos}, the fact that $X_j$ is $(q(i,j))_{i=0}^{m_j}$-compatible and the definition of the numbers $((q(i,j))_{i=0}^{m_j})_{j=1}^k$, we have that $X_{j+1}$ is $(q(i,j+1))_{i=0}^{m_{j+1}}$-compatible. The inductive construction of $(X_j)_{j=1}^k$ is complete.

  Set $X = X_k$. Notice that $m_k = m$. Thus, invoking the definition of the sequence $(q_i)_{i=0}^m$, we have that
  $X$ is an $m$-dimensional $\kappa^*$combinatorial $(q_i)_{i=0}^m$-compatible subspace of $\Lambda^N\times\{0,...,N-1\}^{(\kappa)}$. Moreover, since $X$ is a subspace of $X_j$ for every $j=2,...,k$, by condition (ii) of the inductive construction of $(X_j)_{j=1}^k$ and Remark \ref{rem_ins_sub}, we have that c is strongly $^*$insensitive in $X$.
\end{proof}

\section*{Acknowledgment}

The research was supported by the Hellenic Foundation for Research and
Innovation (H.F.R.I.) under the ``2nd Call for H.F.R.I. Research Projects
to support Faculty Members \& Researchers'' (Project Number: HFRI-FM20-02717).

The research on this paper is partially supported by
grants from NSERC(455916) and CNRS(UMR7586).

%-------------------------------------------------------------------%
%                           Bibliography                            %
%-------------------------------------------------------------------%

\end{document}